\begin{document}

\newtheorem{thm}{Theorem} 
\newtheorem{conj}[thm]{Conjecture}
\newtheorem{prop}{Proposition}[section] 
\newtheorem{deff}[prop]{Definition}
\newtheorem{lem}[prop]{Lemma} 
\newtheorem{sheep}[prop]{Corollary} 
\newtheorem{fact}[prop]{Fact}
\newtheorem{example}[prop]{Example}
\newtheorem{notass}[prop]{Notation and Assumptions}
\newtheorem{quest}[prop]{Quest}

\newcommand{\sthat}{\hspace{.1cm}| \hspace{.1cm}}
\newcommand{\p}{\mathbb{P}^1}
\newcommand{\ga}{\mathbb{G}_a}
\newcommand{\gm}{\mathbb{G}_m}
\newcommand{\spec}{ \mathit{Spec} }
\newcommand{\id}{\operatorname{id} }
\newcommand{\acl}{\operatorname{acl}}
\newcommand{\aclsig}{\operatorname{acl_\sigma}}

\newcommand{\qq}{\mathbb{Q}}
\newcommand{\nn}{\mathbb{N}}
\newcommand{\zz}{\mathbb{Z}}
\newcommand{\zpz}{\zz / p\zz}
\newcommand{\ccc}{\mathbb{C}}

\newcommand{\AAA}{\mathcal{A}}
\newcommand{\BB}{\mathcal{B}}
\newcommand{\CC}{\mathcal{C}}
\newcommand{\DD}{\mathcal{D}}
\newcommand{\EE}{\mathcal{E}}
\newcommand{\FF}{\mathcal{F}}
\newcommand{\MM}{\mathcal{M}}

\newcommand{\fix}{\operatorname{fix}}
\newcommand{\qacfa}{$\mathbb{Q}$acfa}
\newcommand{\tsig}{T_\sigma}
\newcommand{\lsig}{L_\sigma}
\newcommand{\sigdeg}{\deg_\sigma}
\newcommand{\dM}{\operatorname{dM}}
\newcommand{\RM}{\operatorname{RM}}
\newcommand{\zdense}{Zariski-dense }

\newcommand{\tpsig}{\operatorname{tp_\sigma}}
\newcommand{\tptau}{\operatorname{tp_\tau}}

\def\Ind#1#2{#1\setbox0=\hbox{$#1x$}\kern\wd0\hbox to 0pt{\hss$#1\mid$\hss}
\lower.9\ht0\hbox to 0pt{\hss$#1\smile$\hss}\kern\wd0}
\def\inde{\mathop{\mathpalette\Ind{}}}
\def\Notind#1#2{#1\setbox0=\hbox{$#1x$}\kern\wd0\hbox to 0pt{\mathchardef
\nn=12854\hss$#1\nn$\kern1.4\wd0\hss}\hbox to
0pt{\hss$#1\mid$\hss}\lower.9\ht0 \hbox to
0pt{\hss$#1\smile$\hss}\kern\wd0}
\def\nind{\mathop{\mathpalette\Notind{}}}

\newcommand{\prodplus}{{\times +}}
\newcommand{\uniplus}{{\cup +}}
\newcommand{\ggll}{\operatorname{GL}}

\author{Alice Medvedev}
\thanks{The author was supported by NSF DMS-0854998 and NSF DMS-1500976.}

\title{$\mathbb{Q}$ACFA}

\begin{abstract}
We show that many nice properties of a theory $T$ follow from the corresponding properties of its reducts to finite subsignatures. If $\{ T_i \}_{i \in I}$ is a directed family of conservative expansions of first-order theories and each $T_i$ is stable (respectively, simple, rosy, dependent, submodel complete, model complete, companionable), then so is the union $T := \cup_i T_i$. In most cases, (thorn)-forking in $T$ is equivalent to (thorn)-forking of algebraic closures in some $T_i$.

This applies to fields with an action by $(\mathbb{Q}, +)$, whose reducts to finite subsignatures are interdefinable with the theory of fields with one automorphism. We show that the model companion $\mathbb{Q}$ACFA of this theory is strictly simple and has the same level of quantifier elimination and the same algebraic characterization of algebraic closure and forking independence as ACFA. The lattice of the fixed fields of the named automorphisms breaks supersimplicity in $\mathbb{Q}$ACFA, but away from these we find many (weakly) minimal formulas.
\end{abstract}

\maketitle

\section{Introduction}


The main subject of this paper is fields $K$ with a $(\mathbb{Q}, +)$-action, that is, an embedding of $(\mathbb{Q}, +)$ into $\operatorname{Aut}(K)$.

 The model theory of fields with one automorphism, also known as fields with a $(\mathbb{Z}, +)$-action, is worked out in great depth in \cite{udizoe} and \cite{udikobizoe}, where a model-companion ACFA is described and proved to be supersimple. Actions by some other groups have also been considered. The beginning of \cite{udizoe} applies just as well to fields with an action by a free group on several generators, showing that this theory admits a model-companion, and that this model-companion is simple. The theory of fields with an action by a free abelian group on several generators does not admit a model companion. In this paper we show that fields with a $(\mathbb{Q}, +)$-action are far more tame.

\begin{thm} \label{firstqacfathm}
The theory of fields with a $(\mathbb{Q}, +)$-action has a model-companion $\mathbb{Q}$ACFA. Completions of $\mathbb{Q}$ACFA, given by specifying the characteristic, are quantifier-free stable, simple, but not supersimple. In models of $\mathbb{Q}$ACFA, model-theoretic algebraic closure of a set $A$ is given by field-theoretic algebraic closure of the substructure generated by $A$. For model-theoretically algebraically closed subsets of models of $\mathbb{Q}$ACFA, forking independence is equivalent to algebraic independence.
\end{thm}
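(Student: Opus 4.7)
The strategy is to present the theory $T$ of fields with a $(\qq,+)$-action as a directed union of theories to which the transfer principle stated in the abstract applies. Write $(\qq,+) = \bigcup_n \frac{1}{n!}\zz$, and for each $n$ let $T_n$ be the theory of fields with an action of $\frac{1}{n!}\zz$, presented via a single automorphism $\sigma_n$ generating that action. Each $T_n$ is interdefinable with the theory of difference fields and so is companionable with companion (a copy of) ACFA, and the change-of-generator identity $\sigma_n = \sigma_{n+1}^{n+1}$ realizes $T_n$ as a conservative reduct of $T_{n+1}$; the directed union is precisely $T$. Since ACFA is simple, quantifier-free stable, and companionable, the transfer principle immediately supplies a model companion $\qq$ACFA of $T$ that is simple and quantifier-free stable.

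The algebraic descriptions of $\acl$ and forking in $\qq$ACFA come from the ACFA descriptions together with the portion of the transfer principle identifying $\acl$ and (thorn-)forking in $T$ with $\acl$ and forking computed in some sufficiently large $T_n$. The $\acl$ of $A$ in a single ACFA$_n$-reduct is the field-theoretic algebraic closure of the $\sigma_n$-difference closure of $A$; taking the union over $n$ produces precisely the field-theoretic closure of the $(\qq,+)$-substructure generated by $A$. For forking, Chatzidakis--Hrushovski identify independence in ACFA over an algebraically closed base with algebraic independence, and transfer yields the same statement in $\qq$ACFA.

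The one assertion not handled by pure transfer is failure of supersimplicity, and this is where the real work lies. Because $\sigma_1 = \sigma_n^{n!}$, the named fixed fields form a descending chain $\fix(\sigma_1) \supseteq \fix(\sigma_2) \supseteq \fix(\sigma_3) \supseteq \cdots$ of $\emptyset$-definable subfields. Each containment is strict: inside any ACFA$_{n+1}$-reduct, $\fix(\sigma_{n+1})$ is a pseudofinite subfield of $\fix(\sigma_n)$ of positive codimension, a standard fact about fixed fields of powers of the generic automorphism. A generic element of $\fix(\sigma_1)$ then admits an infinite chain of forking extensions obtained by requiring it to lie in $\fix(\sigma_n)$ for successively larger $n$, witnessing $\mathrm{SU}$-rank at least $\omega$. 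The delicate point, and the one I expect to be the main obstacle, is to check that each step genuinely forks rather than being already forced by the earlier conditions; this reduces, via the transfer description of forking, to an algebraic-independence computation inside an appropriate ACFA$_n$-reduct, where the distinctness of the fixed fields of $\sigma_n^k$ for varying $k$ provides the needed witness.
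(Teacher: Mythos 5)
Your proposal follows the same route as the paper: present $L_\qq$ as the union of the directed chain of sublanguages $L_{1/n!}$, observe that each reduct $T_n$ is a definitional expansion of ACFA, and invoke the general transfer principle to obtain the model companion, simplicity, quantifier-free stability, the algebraic description of $\acl$, and the algebraic-independence characterization of forking for algebraically closed sets. You also correctly identify the lattice of named fixed fields as the source of non-supersimplicity, which is exactly the paper's Proposition on $F_q$.

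The one place where the proposal has a genuine gap is the final step. You write that a generic element of $\fix(\sigma_1)$ "admits an infinite chain of forking extensions obtained by requiring it to lie in $\fix(\sigma_n)$ for successively larger $n$." As stated this does not work: the formula $x \in \fix(\sigma_n)$ is $\emptyset$-definable, so it neither forks over $\emptyset$ nor is consistent with the type of an element that is generic in $\fix(\sigma_1)$ (such an element is precisely \emph{not} in the smaller fixed field). What actually produces the infinite forking chain is that $\fix(\sigma_{n+1})$ has infinite additive index in $\fix(\sigma_n)$, so one builds a chain of cosets, each cut out over a strictly larger parameter set, whose successive generic types fork. Relatedly, "SU-rank at least $\omega$" does not by itself refute supersimplicity — $\omega$ is a legitimate ordinal value of SU-rank; what you need is a type whose SU-rank is \emph{undefined}, i.e.\ an infinite forking chain. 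Both corrections are easy to make once you use the coset construction together with the algebraic characterization of forking you have already imported, which is exactly the paper's terse justification that an infinite descending chain of infinite extensions of $\emptyset$-definable fields gives an infinite forking chain.
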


We obtain these results from the corresponding ones for ACFA by observing that every finitely-generated subgroup of $(\mathbb{Q}, +)$ is isomorphic to $(\mathbb{Z}, +)$, so the reduct of $\mathbb{Q}$ACFA to any finite subsignature is essentially a definitional expansion of ACFA.
We then show that many properties of a first-order theory follow from the corresponding properties of enough reducts of the theory. While this general idea has certainly been considered by other people, we do not believe these results are written down anywhere.

\begin{deff} A collection $\{ L_w \subset L_W \sthat w \in W \}$ of subsignatures of $L_W$ is \emph{sufficient} if every finite subsignature of $L_W$ is contained in some $L_w$. If, in addition, for all $u,v \in W$ there is some $w \in W$ such that $L_u \cup L_v \subset L_w$, we say that the collection is \emph{directed}.
\end{deff}

A sufficient collection of finite subsignatures is automatically directed. Any tail (i.e. $\{ L_w \sthat L_w \supset L_v \}$ for some $v \in W$) of a directed sufficient collection of subsignatures is itself sufficient and directed. The natural signature $L_{\mathbb{Q}} := \{ +, \cdot, 0, 1 \} \cup \{ \sigma_q \sthat q \in \mathbb{Q} \}$ for fields with a  $(\mathbb{Q}, +)$-action admits a directed sufficient collection of subsignatures $L_q := \{ +, \cdot, 0, 1 \} \cup \{ \sigma_{nq} \sthat n \in \mathbb{Z} \}$ for $q \in \qq$.


\begin{thm} \label{onepurthm}
 Suppose that $T_W$ is an $L_W$-theory, and $\{ L_w \subset L_W \sthat w \in W \}$ is a sufficient collection of subsignatures of $L_W$. For each of the following properties, if all reducts $T_w$ of $T_W$ to $L_w$ have the property, then $T_W$ does too: consistency, completeness, quantifier elimination, partial quantifier elimination such a model-completeness, elimination of imaginaries, stable embeddedness of some definable set, characterization of algebraic closure, stability, simplicity, rosiness, dependence. \end{thm}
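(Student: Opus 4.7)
The plan is to exploit the single unifying observation that every $L_W$-formula mentions only finitely many symbols, so by sufficiency it lies in some $L_w$. I would first record this as a preliminary lemma, noting also that a $L_W$-structure and its $L_w$-reduct agree on the truth of any $L_w$-formula, so consistency, inconsistency, and algebraic multiplicities of $L_w$-formulas transfer freely between $T_w$ and $T_W$. Each listed property then reduces to routine verification.

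The syntactic properties I would dispatch by compactness. For consistency, any finite $\Sigma \subset T_W$ lies in some $L_w$ and hence inside $T_w$, which is consistent. For completeness, every $L_W$-sentence lives in some $L_w$, where it is decided by $T_w$. For quantifier elimination and model-completeness, the quantifier-free (resp.\ existential) $L_w$-equivalent of a formula guaranteed by $T_w$ is also an $L_W$-formula. Characterization of algebraic closure and stable embeddedness of a fixed definable set $D \in L_w$ are handled in the same style: $b \in \acl_W(A)$ is witnessed by a formula in some $L_{w'} \supseteq L_w$, and a $L_W$-externally-definable subset of $D^n$ lies in some $L_w$ where stable embeddedness produces a $D$-internal definition.

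For the combinatorial model-theoretic properties (stability, simplicity, rosiness, dependence), I would use that each is characterized by the non-existence of a single formula with a bad configuration: the order property, the tree property, the \emph{þ}-tree property, and the independence property, respectively. A failure of the property in $T_W$ is witnessed by a formula $\phi$ together with parameters realizing the bad combinatorial pattern; the formula lies in some $L_w$, and the pattern is a Boolean combination of consistency statements for instances of $\phi$, which transfers to $T_w$ by the preliminary lemma, contradicting the hypothesis. Elimination of imaginaries is analogous: an $L_W$-definable equivalence relation on a sort is defined by a formula in some $L_w$, where $T_w$ provides a code; that code remains $L_W$-interdefinable with the equivalence class since interdefinability is preserved upon enlarging the language.

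The main obstacle is conceptual rather than technical: one must keep scrupulously straight what ``reduct'' means and verify that each property is really witnessed at the level of individual formulas with finitely many parameters, rather than by some global feature of the models. Once one fixes the convention that $T_w$ consists of the $L_w$-consequences of $T_W$ and uses that a $L_w$-formula has the same models in the $L_W$-sense and the $L_w$-sense (of a given $L_W$-structure), the verifications above become essentially uniform. The interest of the theorem lies in recognizing this pattern rather than in any one case, and the proof I propose is a case-by-case but entirely parallel application of the single preliminary lemma.
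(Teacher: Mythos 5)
Your approach is essentially the same as the paper's for almost every item on the list: the unifying observation that each $L_W$-formula mentions only finitely many symbols and hence lies in some $L_w$, together with the fact that $M_w$ and $M_W$ agree on the truth of any $L_w$-formula, is exactly the engine the paper uses. For consistency, completeness, quantifier elimination, model-completeness, elimination of imaginaries, stable embeddedness, and characterization of algebraic closure, your account matches the paper's; likewise for stability, simplicity, and dependence, where the paper (as you do) observes that the order, tree, and independence properties are each local in a single formula with parameters, and that the witnessing configuration is a Boolean combination of consistency/inconsistency conditions on instances of that formula, which transfers downward to the reduct $T_w$ verbatim.

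There is, however, a genuine gap in how you dispose of rosiness. You assert that rosiness is characterized by the non-existence of a ``\th-tree property'' and that the witnessing pattern is again ``a Boolean combination of consistency statements for instances of $\phi$.'' The paper explicitly flags that rosiness is \emph{not} characterized by a Shelahian combinatorial property of this form, and issues a warning that the formula-by-formula argument works only for properties whose witnessing configuration does not require the parameters to have the same type (or be indiscernible) \emph{in the full signature}. Rosiness runs up against exactly this: the paper works instead with Adler's local thorn-dividing rank characterization (Fact~\ref{adler-rosy-fact}), whose third bullet demands an equality of types over finite parameter sets, $\operatorname{tp}_L(b^{ij},\bar d^i/A_i)=\operatorname{tp}_L(b^i,\bar c^i/A_i)$. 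This is not a Boolean combination of consistency statements about $\phi$-instances, and one must separately observe that equality of $L_W$-types implies equality of $L_w$-types, so the condition is ``only easier in the reduct.'' That observation is easy, but it is precisely the step your uniform treatment elides; the paper devotes a separate proposition (plus the Appendix's unwrapping of Adler's definitions) to getting it right. To close the gap, either supply Adler's rank-theoretic characterization and check that the type-equality clause passes to reducts, or justify concretely why whatever combinatorial characterization you have in mind for rosiness really is local in a single formula in the strong sense required.
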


Each of stability, simplicity, and rosiness is characterized by the presence of a good notion of (forking or thron-forking) independence; often, independence in the sense of $T_W$ can be characterized in terms of independence in some or all of the reducts $T_w$.

\begin{thm} \label{twopurthm}
Suppose that $T_W$ is an $L_W$-theory; that $\{ L_w \subset L_W \sthat w \in W \}$ is a sufficient collection of subsignatures of $L_W$; and that all $T_w$ and, therefore, $T_W$ are simple (resp. rosy). Let $A \subset B, C \subset M_W \models T_W$, and suppose that $A$, $B$, and $C$ are algebraically closed.

If $B$ is forking (resp. thorn-forking) independent from $C$ over $A$ in reducts $M_w$ of $M_W$ to $L_w$ for all $w$, then this also holds in $M_W$.

The converse is true when all $T_w$ are simple and eliminate hyperimaginaries.
\end{thm}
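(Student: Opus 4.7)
The plan is to handle the two directions separately. For the forward direction I would argue directly with formulas, without using simplicity. Suppose $B \nind_A C$ in $M_W$, witnessed by an $L_W$-formula $\phi(x,c)$ satisfied by a tuple $b$ from $B$ that forks over $A$ in $T_W$. Unpacking, $T_W \vdash \phi(x,c) \to \bigvee_{i \leq n} \psi_i(x, d_i)$ with each $\psi_i(x, d_i)$ dividing over $A$ in $T_W$, witnessed by an $L_W$-indiscernible-over-$A$ sequence $(d_i^j)_j$ of realizations of $\operatorname{tp}^{L_W}(d_i/A)$ with some $k_i$-inconsistency of $\{\psi_i(x,d_i^j)\}_j$. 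Only finitely many formulas and parameter tuples occur; by directedness they all lie in a common $L_v$. Each witness sequence remains $L_v$-indiscernible over $A$ (strictly fewer formulas to check) and realizes $\operatorname{tp}^{L_v}(d_i/A)$, so each $\psi_i$ divides over $A$ in $T_v$; the displayed implication, being $T_W$-provable and in $L_v$, is $T_v$-provable; hence $\phi$ forks in $T_v$ and $B \nind_A C$ in $M_v$. The thorn-forking case is identical, as strong dividing behaves the same way under signature restriction.

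For the converse, assume all $T_w$ are simple with EHI. Define $B \inde^*_A C$ on algebraically closed subsets of $M_W$ to mean $B \inde_A C$ in every reduct $M_w$. The forward direction already gives $\inde^* \subseteq \inde^{L_W}$; for the reverse inclusion I would verify that $\inde^*$ satisfies the Kim-Pillay axioms characterizing forking independence in the simple theory $T_W$ (which is simple by Theorem \ref{onepurthm}), and then invoke Kim-Pillay uniqueness to conclude $\inde^* = \inde^{L_W}$ on algebraically closed subsets of $M_W$, which is exactly the converse direction.

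Invariance, monotonicity, symmetry, transitivity, normality, and finite character of $\inde^*$ all follow immediately from the corresponding properties of each $\inde^{L_w}$. Local character follows by taking the union over $w$ of the $L_w$-local-character witnesses (each of size at most $|T_w|$), giving a total base of size at most $|T_W|$. The critical axiom is the Independence Theorem, and this is where EHI of the reducts is essential: simplicity plus EHI in $T_w$ yields IT over $L_w$-algebraically closed sets, and by the ``characterization of algebraic closure'' clause of Theorem \ref{onepurthm} these coincide with the $L_W$-algebraically closed sets. The main obstacle will be patching the reduct-wise IT-amalgamations into a single coherent $L_W$-amalgamation. I would attempt this via a compactness argument: the partial $L_W$-type asserting the required types over the two wings, together with any finite collection of $L_w$-non-forking conditions, should be finitely satisfiable in $M_W$ thanks to the IT-solutions in the reducts, and hence realizable, exploiting once more the finite-support principle that drove the forward direction.
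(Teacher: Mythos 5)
Your forward direction is essentially the paper's Lemma~\ref{forkingoesdown}: unpack forking into a finite disjunction of dividing (resp.\ strong-dividing) formulas, observe that only finitely many formulas and parameters occur, and note that the dividing witnesses remain witnesses in any reduct $L_w$ containing those symbols. One small remark: you invoke directedness, but sufficiency alone suffices, since the finitely many symbols involved form a finite subsignature and hence are contained in some single $L_w$ by definition.

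The converse is where your proposal diverges from the paper, and where it has a genuine gap. The paper simply applies, once per $w$, Adler's Exercise~3.5 (stated as Fact~\ref{exer35}): if $T$ and a reduct $T'$ are both simple, $T'$ has a canonical independence relation (which holds when $T'$ is simple and eliminates hyperimaginaries), and $C\subset A,B$ are $T^{eq}$-algebraically closed, then $A\inde_C B$ in $T$ implies $A\inde_C B$ in $T'$. Applied with $T = T_W$ and $T' = T_w$, this immediately gives the converse. Your proposal instead defines $\inde^*$ as the conjunction of $\inde^w$ over all $w$ and aims to verify the Kim--Pillay axioms for $\inde^*$. The critical step, the Independence Theorem, is exactly where this breaks down. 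In your proposed compactness argument, a finite fragment of the required type involves finitely many formulas from $\operatorname{tp}_W(a_1/MB)\cup\operatorname{tp}_W(a_2/MC)$ together with negations of finitely many formulas $\theta_j$ that $L_{w_j}$-fork over $M$; choosing $u$ with $L_u\supset L_{w_j}$ for all $j$ and invoking IT in $T_u$ produces $a^u$ with $a^u\inde^u_M BC$. But to see that $a^u$ avoids each $\theta_j$, you need either that $\theta_j$ forks over $M$ in $L_u$ (dividing witnesses in $L_{w_j}$ need not remain witnesses in the larger $L_u$ --- this is precisely the difficulty the paper points out after Lemma~\ref{forkingoesdown}) or that $a^u\inde^u_M BC$ implies $a^u\inde^{w_j}_M BC$ --- which is the converse direction itself, applied to the pair $(T_u,T_{w_j})$. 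So the compactness argument silently reuses the conclusion, or else needs Fact~\ref{exer35} as input; in the latter case Fact~\ref{exer35} already gives the theorem directly and the Kim--Pillay machinery is superfluous. In short: the forward direction is fine and matches the paper; the converse is not established without the tool the paper borrows from Adler.
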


For stable theories, this follows immediately from the characterization of non-forking in terms of definitional extensions of types. The requirement that the three sets be algebraically closed in the sense of the full signature rules out obvious counterexamples. We do not see how to remove the requirement that all $T_w$ eliminate hyperimaginaries, though it is conjectured that all simple theories do. This result suffices for the application to $\mathbb{Q}$ACFA as the reducts are definitional expansions of ACFA, which is supersimple, and all supersimple theories eliminate hyperimaginaries. In Section \ref{combostabsec}, we point out the difficulties in trying to prove the converse for rosy theories. The one direction already has strong consequences for ranks and other notions from geometric stability theory (see Section \ref{puresect} for definitions). It follows immediately that the limsup of Lascar ranks in $L_w$, if finite, is an upper bound on the Lascar rank in $L_W$. Here, we use the term ``Lascar rank'' loosely, allowing unstable theories and partial types. Furthermore,

\begin{thm} \label{geopurthm}
Suppose that $T_W$ is an $L_W$-theory, and $\{ L_w \subset L_W \sthat w \in W \}$ is a (directed) sufficient collection of subsignatures of $L_W$.
 A partial $L_W$-type $\pi$ is $L_W$-trivial (resp., $L_W$-one-based, $L_W$-modular group) whenever all reducts $\pi_w$ are $L_w$-trivial (resp., $L_w$-one-based, $L_w$-modular group whenever the group law is $L_w$-definable).\end{thm}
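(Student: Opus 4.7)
My approach handles triviality and one-basedness through the forking-independence characterization of the properties together with the lifting provided by Theorem \ref{twopurthm}, while modularity of a group admits a shorter direct argument using sufficiency of the collection of subsignatures. A preliminary observation, used throughout, is that for an $L_w$-formula $\phi$, $L_W$-forking and $L_w$-forking coincide: an $L_W$-indiscernible sequence is $L_w$-indiscernible, and conversely any $L_w$-indiscernible witnessing sequence can be refined via Erd\H{o}s--Rado into an $L_W$-indiscernible one with the same $L_w$-EM-type. Consequently, $L_W$-forking independence between types implies $L_w$-forking independence, for each $w$.

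For triviality, let $b_1, \dots, b_n$ realize non-forking extensions of $\pi$ to some $B \supseteq A$ and be pairwise $L_W$-independent over $B$. The preliminary observation gives pairwise $L_w$-independence, and $L_w$-triviality of $\pi_w$ upgrades this to mutual $L_w$-independence of the $b_i$ over $B$, for each $w$. To apply Theorem \ref{twopurthm}, set $B' := \acl(B)$ (in $L_W$) and $E_i := \acl(B' b_i)$; I would then check, via pair-transitivity in each simple reduct $T_w$, that mutual $L_w$-independence of the $b_i$ over $B$ promotes to mutual $L_w$-independence of the $E_i$ over $B'$. The crucial input is that $B'$ and each $E_i$ are $L_W$-algebraic, hence $L_w$-nonforking, over the corresponding smaller data, so they can be absorbed by the independence relation. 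Theorem \ref{twopurthm} then yields mutual $L_W$-independence of the $E_i$ over $B'$, and hence of the $b_i$ over $B$.

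For one-basedness I would use the equivalent formulation that $\pi$ is one-based iff for every finite tuple $a \models \pi$ and every $L_W$-algebraically closed $B$, one has $a \inde_{\acl(Aa) \cap B} B$. Given $L_w$-one-basedness of each $\pi_w$, the same closure-absorption argument sketched above translates $L_w$-independence over $\acl_w(Aa) \cap B$ into $L_w$-independence over $\acl(Aa) \cap B$ (the latter $\acl$ taken in $L_W$), whereupon Theorem \ref{twopurthm} converts this to the desired $L_W$-independence. Modularity of an $L_W$-definable group $G$ admits a direct argument: any $L_W$-definable $X \subseteq G^n$ is defined by a formula involving finitely many symbols, so by sufficiency and directedness both the group law and $X$ are $L_w$-definable for some large enough $w$, and $L_w$-modularity of $G$ then writes $X$ as a Boolean combination of cosets of $L_w$-definable subgroups, which remain $L_W$-definable.

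The main obstacle is the closure-absorption step in the triviality and one-basedness cases: one must show that $\acl$ in $L_W$ is transparent to $L_w$-forking, even though the $L_W$-algebraic closure can be strictly larger than the $L_w$-algebraic closure. This rests on the preliminary observation plus standard pair-transitivity and base-monotonicity in each simple $T_w$. Note that the hyperimaginary-elimination hypothesis on the $T_w$, needed for the converse direction of Theorem \ref{twopurthm}, is not required here, since only the easier direction of that theorem is invoked; it is, however, harmless in the $\mathbb{Q}$ACFA application where each reduct is supersimple.
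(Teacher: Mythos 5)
The modular-group case you handle correctly, and your argument there is essentially the paper's ("$\forall\exists$ in formulae"). The triviality and one-basedness arguments, however, rest on two steps that are both, in disguise, the \emph{hard} direction of Theorem~\ref{twopurthm}, contrary to your claim of invoking only the easy direction. Your ``preliminary observation'' asserts that $L_W$-independence implies $L_w$-independence. This is precisely the converse clause of Theorem~\ref{twopurthm}, which requires elimination of hyperimaginaries, and it is false without such a hypothesis. The Erd\H{o}s--Rado refinement does produce an $L_W$-indiscernible sequence with the same $L_w$-EM-type, but there is no control over the $L_W$-type of its terms over the base: the refined sequence need not lie in $\operatorname{tp}_{L_W}(d/C)$, so you cannot conclude that the original formula $\psi(x,d)$ $L_W$-divides. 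The paper explicitly flags this issue just after Lemma~\ref{forkingoesdown} (``as signature grows, algebraic closures grow'', forking can be ``turned off''), and its Fact~\ref{exer35} exists precisely to handle this direction under extra hypotheses.

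The ``closure-absorption'' step has the same problem: you claim that $B'=\acl_W(B)$ and $E_i=\acl_W(B'b_i)$, being $L_W$-algebraic over $B$ (resp.\ $B'b_i$), are ``hence $L_w$-nonforking'' over those sets. But $\acl_W$ is typically much larger than $\acl_w$; elements of $\acl_W(B)\setminus\acl_w(B)$ are $L_w$-transcendental over $B$ and can $L_w$-fork freely with anything. So mutual $L_w$-independence of the $b_i$ over $B$ does \emph{not} promote to mutual $L_w$-independence of the $E_i$ over $B'$. The paper avoids both gaps by arguing contrapositively: assume $\pi$ is \emph{not} $L_W$-one-based, obtain $B^W\nind^W_{D^W}C^W$, and then push this failure down into a reduct by unwrapping the finitely many formulas and parameters in the proof of Lemma~\ref{forkingoesdown} and choosing $L_w$ large enough to contain them. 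That is the easy direction. For triviality, the paper's Definition~\ref{deftrivmod} is $\acl$-based, not the forking-theoretic formulation you use, which makes a completely direct ``$\forall\exists$'' argument possible: algebraicity in $L_W$ is witnessed by a single $L_w$-formula for large $w$, apply $L_w$-triviality, and use $\acl_w\subseteq\acl_W$. You may want to redo your two problematic cases along these lines, since the forward route you chose genuinely needs the hyperimaginary-elimination hypothesis you were trying to avoid.
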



 We obtain stronger results for $\mathbb{Q}$ACFA, where the various reducts are so closely related that one reduct might already control everything. While some of these rely on special properties of ACF, others are equally true when ACF is replaced by an arbitrary theory $T$, that is, for the model-companion of the theory of models of $T$ with a $\mathbb{Q}$-action, which will exist whenever $T_A$ exists. For example, it may be interesting to see how much of this works when ACF is replaced with DCF.

\begin{thm} \label{better-qacfa}
($\mathbb{Q}$ACFA) Suppose that $p$ is an $L_\mathbb{Q}$-type whose reduct $p_1$ to $L_1$ is minimal in the sense of $L_1$ and ACFA. \begin{itemize}
 \item If $p_1$ is trivial in the sense of $L_1$ and ACFA, then $p$ is minimal and trivial in the sense of $\mathbb{Q}$ACFA.
 \item If $p_1$ is fieldlike in the sense of $L_1$ and ACFA and nonorthogonal to $\sigma_1(x) = x$, then the Lascar rank of $p$ in the sense of $\mathbb{Q}$ACFA is infinity. If $p_1$ is fieldlike in the sense of $L_1$ and ACFA and nonorthogonal to the fixed field of $\sigma_1^{m'}(x) = x^{p^m}$, for some $m, m' \neq 0$, then the Lascar rank of $p$ in the sense of $\mathbb{Q}$ACFA is $m$.
 \item If $p_1$ is orthogonal to all $L_1$-definable minimal fields in the sense of $L_1$ and ACFA, then $p$ is orthogonal to all $L_q$-definable minimal fields in the sense of $L_q$ and ACFA for all $q \in \qq$.
     \end{itemize}
\end{thm}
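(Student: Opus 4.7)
The plan is to treat the three bullets separately, leveraging the transfer theorems (\ref{onepurthm}, \ref{twopurthm}, \ref{geopurthm}) together with the algebraic descriptions of $\acl$ and forking in $\mathbb{Q}$ACFA from Theorem \ref{firstqacfathm}. For the first bullet, minimality of $p$ in $\mathbb{Q}$ACFA is essentially immediate: given $b\models p$ and $B\supseteq A$, if $b\notin\acl(B)$ in $L_\qq$ then a fortiori $b\notin\acl(B)$ in $L_1$, so by $L_1$-minimality of $p_1$ the element $b$ is algebraically independent from $B$ over $A$, which is precisely $b\inde_A B$ in $\mathbb{Q}$ACFA. Triviality is subtler: I apply Theorem \ref{geopurthm} to the sufficient directed family $\{L_q:q\in\qq\}$ to reduce to showing that each reduct $p_q$ is $L_q$-trivial. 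For $q=1/n$ the language $L_q$ contains $L_1$, and $p_q$ extends $p_1$ by finite Galois-type data describing $\sigma_q$ modulo $\sigma_1$; the key step is that triviality of $p_1$ propagates through this finite algebraic Galois extension.

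For the second bullet, I analyze the $L_\qq$-forking structure on the $L_1$-definable fixed field $F := \{x : \sigma_{m'}(x) = x^{p^m}\}$. Given $c$ generic in $F$ over an $L_\qq$-algebraically closed $A$, Theorem \ref{firstqacfathm} identifies forking extensions with enlargements of $A$ by elements of $\acl(A,c)\setminus A$ in $L_\qq$. In the case $m=0$, such elements are manufactured as elementary symmetric functions of the $\sigma_{1/n}$-orbit of $c$ (each generically in $\fix(\sigma_{1/n})\setminus A$), and iterating over $n\in\nn$ yields an infinite descending chain of forking extensions, giving Lascar rank $\infty$. For $m\neq 0$, the Frobenius twist $\sigma_{m'}(c)=c^{p^m}$ rigidifies the $\qq$-orbit structure so that this unbounded accumulation fails; one establishes that exactly $m$ forking extensions are obtainable, matching the Frobenius depth of the defining equation. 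The main obstacle is the upper bound $m$: after exhibiting the $m$ forking extensions, no further ones may exist, which requires combining the $L_1$-ACFA rank analysis of fixed fields with careful bookkeeping across all $L_q$-reducts.

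For the third bullet, the decisive containment is: any $L_q$-minimal field $K_0 = \{x : \sigma_{j/n}(x) = x^{p^k}\}$ with $q = 1/n$ is contained in the $L_1$-minimal field $\{x : \sigma_j(x) = x^{p^{nk}}\}$, obtained by iterating $\sigma_{j/n}$ exactly $n$ times on the defining relation. By hypothesis $p_1$ is $L_1$-orthogonal to the larger field, and orthogonality to a set persists on subsets. To upgrade $L_1$-orthogonality to $L_q$-orthogonality of $p_q$ to $K_0$, I use that for $c\in K_0$ the relation $\sigma_{j/n}(c)=c^{p^k}$ tightly controls the $L_q$-algebraic closure of $c$ (forcing $\sigma_{j/n}$ to preserve the field generated by $c$), and invoke Theorem \ref{twopurthm} to transfer $L_1$-independence to $L_q$-independence on the relevant algebraically closed sets.

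The common obstacle across the three bullets is inter-comparing geometric properties in the $L_1$- and $L_q$-reducts, whose languages are generally incomparable (except when $L_1\subseteq L_q$, i.e., $q=1/n$). The essential technical tool is the Galois structure $\sigma_{1/n}^n=\sigma_1$, linking the reducts via finite algebraic data at each stage; without this, all three bullets would fail, and with it the transfer theorems of the previous section do most of the heavy lifting.
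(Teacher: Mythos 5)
Your argument for the first bullet is incorrect, and the error is instructive because it would, if valid, also contradict Proposition \ref{namedfieldprop} of the paper. You claim that $L_1$-minimality of $p_1$ alone yields $L_\mathbb{Q}$-minimality of $p$: given $b \models p$ with $b \notin \acl_\mathbb{Q}(B)$, you infer $b \notin \acl_1(B)$, invoke $L_1$-minimality to get $L_1$-independence, and conclude $L_\mathbb{Q}$-independence. The last step is the gap. By Theorem \ref{firstqacfathm}, $b \inde^\mathbb{Q}_A B$ requires $\acl_\mathbb{Q}(Ab) \inde^{ACF}_{\acl_\mathbb{Q}(A)} \acl_\mathbb{Q}(AB)$, and $\acl_\mathbb{Q}(Ab)$ is in general vastly larger than $\acl_1(Ab)$ — it contains all the $\sigma_{1/n}$-images of $b$. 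So $L_1$-independence of $b$ from $B$ says nothing about these new elements. Concretely, the generic type of $\fix(\sigma_1)$ is $L_1$-minimal, yet its $L_\mathbb{Q}$-Lascar rank is undefined (Proposition \ref{namedfieldprop}): the chain $\fix(\sigma_1) \supset \fix(\sigma_{1/2}) \supset \fix(\sigma_{1/4}) \supset \cdots$ produces forking extensions without $b$ ever becoming $L_\mathbb{Q}$-algebraic. Your argument would falsely conclude this type stays minimal. The hypothesis of triviality is not cosmetic: it is exactly what is needed to control $\acl_\mathbb{Q}(Ab)$, and the paper's Lemma \ref{2ndtypelem}(2) spends real effort showing that triviality forces any new algebraic relation among the $\tau^i$-prolongations to be ``essentially binary'' and hence definable over the base $E$, so the new closure injects no new forking.

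For the second bullet your account is directionally plausible but omits what actually makes the bound work: the paper reduces to the fixed field of $(\Phi \circ \sigma_{q_1}^{-1})^m$ and then uses Corollary \ref{rankcomputesheep} together with the known ACFA fact that $\fix(\Phi \circ \sigma_{q_2}^{-1})$ has rank one in every reduct $L_{q_2}$ with $q_2 = q_1/n$ because $1$ and $n$ are coprime. Your appeal to the Frobenius ``rigidifying the orbit structure'' is not an argument, and your sketch for $m = 0$ is a somewhat awkward detour around the cleaner observation that an infinite descending chain of infinite extensions of definable fields gives an infinite forking chain. For the third bullet your containment $\{x : \sigma_{j/n}(x) = x^{p^k}\} \subseteq \{x : \sigma_j(x) = x^{p^{nk}}\}$ and the persistence of orthogonality under inclusion is the right idea, and is essentially what the third part of Lemma \ref{2ndtypelem} formalizes via the prolongation $\pi^{\times +}$; but you would still need to articulate the nonorthogonality transfer through $L_\tau \to L_\sigma$ carefully, as the paper does via Lemma \ref{1stypelem}, rather than merely gesturing at ``Theorem \ref{twopurthm}''. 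So the overall architecture you propose — reduce to one $L_q$ at a time and exploit $\sigma_{1/n}^n = \sigma_1$ — matches the paper, but the key technical content lives in Lemma \ref{2ndtypelem}, and your sketch of the first bullet, as written, fails.
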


 This begs the question of what happens when $p_1$ is grouplike, i.e. nonorthogonal to a generic type of a minimal modular group. It is certainly possible for the rank to go up: for example, $\sigma_1(x) = x^4$ has an $L_{\frac{1}{2}}$-definable infinite, infinite-index subgroup $\sigma_{\frac{1}{2}}(x) = x^2$. However, if rank explodes, the algebraic group responsible for this must have something very close to a compositionally divisible quasiendomorphism, which seems unlikely.

\begin{conj} \label{grouconj}
 ($\mathbb{Q}$ACFA) Suppose that $p$ is an $L_\mathbb{Q}$-type whose reduct $p_1$ to $L_1$ is minimal in the sense of $L_1$ and ACFA. If $p_1$ is grouplike in  the sense of $L_1$ and ACFA, then $p$ has finite rank in the sense of $\mathbb{Q}$ACFA. \end{conj}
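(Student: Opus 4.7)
The plan is to reduce to a minimal modular algebraic group inside an almost-simple commutative algebraic group $H$, show that rank blowup in passing from $L_1$ to $L_{\mathbb{Q}}$ forces a compositionally divisible endomorphism of $H$, and exclude this using the structure of the endomorphism ring of $H$.

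First, use the grouplike hypothesis together with the classification of modular minimal types in ACFA to replace $p_1$ up to nonorthogonality by the generic type of a minimal modular definable subgroup $G \subseteq H$, where $H$ is an almost-simple commutative algebraic group and $G$ is cut out by a difference equation of the form $\sigma_1(x) \in \phi(x) + F$ with $\phi$ an isogeny-class endomorphism of $H$ and $F$ a finite correction. Since nonorthogonality preserves finiteness of Lascar rank, it suffices to bound the rank of the generic type of $G$ in $\mathbb{Q}$ACFA.

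Second, analyze how adjoining $\sigma_{1/n}$ for $n \geq 1$ can refine this type. A proper $L_{1/n}$-forking extension of the generic type of $G$ corresponds to an infinite proper $L_{1/n}$-definable subgroup $G' \subseteq G$. Modularity forces $G'$ itself to be given by a difference equation $\sigma_{1/n}(x) \in \psi(x) + F'$ for some isogeny-class endomorphism $\psi$ of $H$, and the relation $\sigma_{1/n}^n = \sigma_1$ then forces $\psi^n = \phi$ in the endomorphism ring $R := \operatorname{End}(H) \otimes \mathbb{Q}$ (up to a correction from the finite kernels). Third, use the classification of $R$ for almost-simple commutative algebraic groups: $R = \mathbb{Q}$ for $\mathbb{G}_m$, for $\mathbb{G}_a$ in characteristic zero, and for a simple non-CM abelian variety; $R$ is a CM field for a CM abelian variety; and $R$ is a twisted Ore polynomial ring over $\mathbb{F}_{p^k}$ for $\mathbb{G}_a$ in characteristic $p$. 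In each case, a fixed nonzero $\phi$ admits an $n$-th compositional root for only finitely many $n$, since an $n$-th root has compositional degree at most $\deg(\phi)^{1/n}$ and must lie in a discrete sub-semigroup of $R$. Only finitely many $n$ therefore yield proper refinements, each drops rank by a bounded amount, and the total Lascar rank in $\mathbb{Q}$ACFA is finite.

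The principal obstacle, and the reason this remains a conjecture rather than a theorem, lies in the second step: showing that \emph{every} proper $L_{1/n}$-forking extension is genuinely cut out by an endomorphism $\psi$ satisfying $\psi^n = \phi$ in $R$, and not by a more exotic construction such as a quasi-endomorphism of a Weil restriction of $H$, or a subgroup defined using interactions with the fixed field of some $\sigma_{1/n}^{m'}$. In the CM abelian variety case, where $R$ is multiplicatively rich and has many formal $n$-th roots, isolating exactly those roots that arise from a genuine $\mathbb{Q}$ACFA-definable refinement (as opposed to formal algebraic ones) is precisely the delicate ``compositionally divisible quasi-endomorphism'' issue flagged by the author, and any full proof will require a finer analysis of how the $\sigma_{1/n}$ interact with the Weil restriction tower built from $H$ and $\phi$.
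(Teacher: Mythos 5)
This is labeled a conjecture in the paper precisely because no full proof is known; what the paper actually provides in Section~\ref{grpsec} is a partial proof (Propositions~\ref{grdeghappy} and~\ref{happygpprop}) covering the cases where $A_0$ is $\mathbb{G}_m$ or an elliptic curve over $F_\cap$ in characteristic zero. Your overall strategy --- reduce via nonorthogonality to a modular subgroup of an almost-simple commutative algebraic group and argue through the structure of its endomorphism ring --- matches the paper's, but there are concrete errors and an important over-simplification in the middle step that would have to be fixed before the argument could even reproduce the paper's partial results.

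The crucial flaw is in your second step. You assert that any proper $L_{1/n}$-forking refinement of the generic type of $G$ is cut out by a single quasi-endomorphism $\psi$ with $\psi^n = \phi$ in $R$. This is wrong in general even in the $\mathbb{G}_m$ case, because an $L_\tau$-definable subgroup $G'$ of $G$ is encoded (after the reduction of Fact~\ref{whoisb0}) by a \emph{companion matrix} $L \in \ggll_{m'}(R)$ of some order $m'$, not by a scalar. The paper's example $\sigma^2(x) = \sigma(x)^4/x$ (immediately after Proposition~\ref{grdeghappy}) is exactly a case where the naive scalar picture breaks: the degree-ratio of the encoding correspondence is $1$, yet the group is one-based, so the scalar $\psi^n = \phi$ story does not apply. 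The paper's Bookkeeping Lemma~\ref{bukip} is the correct replacement: it extracts the constraint that every eigenvalue of $L$ raised to the $n$th power is an eigenvalue of $M$, a statement at the level of eigenvalues in $R^{alg}$ rather than of endomorphisms in $R$. Your third step is then also too loose --- \emph{discrete sub-semigroup} is not an argument --- whereas the paper makes this precise with Lemma~\ref{algnumthlem} on hereditarily irreducible polynomials over number fields, using unique factorization and Dirichlet's unit theorem. Finally, your classification of $R$ is incorrect (a simple non-CM abelian variety can have $R$ a totally real field or a quaternion algebra, not just $\mathbb{Q}$), and this matters: the paper explicitly restricts to $\mathbb{G}_m$ and elliptic curves over $F_\cap$ because the linear-algebra-over-$R$ step does not obviously go through when $R$ is non-commutative, and your flagged obstacle (Weil restrictions, interaction with fixed fields) is not the one that actually stops the paper. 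The paper also offers a separate, softer route via degree ratios (Proposition~\ref{grdeghappy}) covering many cases independently of the endomorphism-ring structure; your proposal does not touch this.
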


In Section \ref{grpsec}, we prove some special cases of this conjecture. In Proposition \ref{happygpprop}, we prove it for arbitrary subgroups of the multiplicative group and of elliptic curves fixed by all $\sigma_q$ in characteristic zero. Generalizing our proof to arbitrary simple abelian varieties fixed by all $\sigma_q$ would require thinking through some linear algebra over their endomorphism rings, which are usually not commutative. Abelian varieties that are not fixed by all $\sigma_q$ might even be easier to deal with. In ACFA in positive characteristic, the additive group of the field has minimal one-based subgroups; we have no idea what happens to these. In a different direction, Proposition \ref{grdeghappy} gives a soft proof of many cases of Conjecture \ref{grouconj}, relying only on the degrees of the algebraic group correspondence encoding the $L_1$-minimal group.

In Section \ref{sec2}, we prove our general model-theoretic results: Theorems \ref{onepurthm}, \ref{twopurthm}, and \ref{geopurthm}.
In Section \ref{sec3}, we recall some facts about ACFA, apply the results of Section \ref{sec2} to $\mathbb{Q}$ACFA to prove Theorem \ref{firstqacfathm} and develop some $\mathbb{Q}$ACFA-specific technical tools to prove Theorem \ref{better-qacfa}.

The reader should have a working knowledge of first-order model theory as in, for example, Hodges's Shorter Model Theory \cite{babyHodges}, whose notation we follow somewhat faithfully. Additionally, we assume the familiarity with basic notions of stability and simplicity theory that can be found, among other places, in \cite{casasimp} and \cite{bigpillay}. The discussion of rosy theories,  thron-forking, and abstract independence notions in Section \ref{sec2} follows \cite{adlerth} and is mostly irrelevant for applications to $\mathbb{Q}$ACFA. The language of naive algebraic geometry (see, for example, the first chapter of Hartshorne's \cite{hshor}) is used throughout Sections \ref{sec3} and \ref{grpsec}.

We thank Zoe Chatzidakis, Martin Hils, and Thomas Scanlon for long productive conversations about this paper, and several attentive seminar audiences (Paris 7, Maryland, UIC) that have helped us clarify the statements and proofs in this paper.

\section{Some pure model theory} \label{sec2}
\label{puresect}

\subsection{Notation, conventions, references.}

  A \emph{formula} has no parameters unless it is a formula over a parameter set, or a formula in a type over a set, or somesuch. No notational distinction is made between singletons and finite tuples of variables or elements of the model, unless explicitly stated otherwise.
  \emph{Signatures} are sets of symbols; \emph{languages} are sets of formulae.

  We work in multisorted first-order logic, and a subsignature may have fewer sorts. For example, if a theory $T$ is the reduct of a complete theory $S$, then $T^{eq}$ is a reduct of $S^{eq}$. Thus an element of the universe of a structure might no longer be in the universe of the reduct.


 We freely use the word ``Lascar rank'' and the notation $U(\pi)$ to denote various generalizations: the ambient theory need not be stable, and $\pi$ need not be a complete type; see Section \ref{geostabsec} for details. A partial type $\pi$ is \emph{minimal} if $U(\pi) = 1$.

 Our background references are: Hodges's \cite{babyHodges} for basic model theory, Shelah's \cite{shbible} for combinatorial approach to stability, Casanovas' \cite{casasimp} for simple theories, Onshuus' \cite{alf06} for rosy theories, and Adler's \cite{adlerth} for notions of independence.

\subsection{Setup and pre-stability}

  Recall that a collection $C$ of subsignatures of $L$ is \emph{sufficient} if every finite subset of $L$ is contained in an element of $C$. If, in addition, for all $L, L' \in C$ there is some $L'' \in C$ such that $L \cup L' \subset L''$, we say that the collection is \emph{directed}.
A sufficient collection of finite subsignatures is automatically directed. Sometimes, one starts with $L$ and seeks a sufficient collection $C$ of subsignatures. Conversely, a collection $C$ of signatures is a sufficient collection of subsignatures of $L := \cup C$ if and only if every finite subset of $\cup C$ is contained in some element of $C$.
\textbf{From now on, $\{ L_w \sthat w \in W \}$ is a sufficient collection of subsignatures of $L_W := \cup_{w \in W} L_w$.}
 Similarly, one might start with an $L_W$-theory $T_W$ and consider the reducts $T_w$ of $T_W$ to $L_w$; or one might start with a collection of $L_w$-theories $T_w$.

\begin{lem} \label{glue-theories}
 Suppose that for each $w \in W$ we have an $L_w$-theory $T_w$, and that for all $u, v \in W$ such that $L_u \subset L_v$, the reduct of $T_{v}$ to $L_u$ is precisely $T_u$. Let $T_W := \cup_{w \in W} T_w$. Then for each $w \in W$ the reduct of $T_W$ to $L_w$ is precisely $T_w$. \end{lem}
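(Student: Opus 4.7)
The plan is a compactness argument. The easy inclusion is that $T_w$ is contained in the reduct of $T_W$ to $L_w$: every sentence of $T_w$ is an $L_w$-sentence and lies in $T_W$ by construction.

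For the reverse inclusion, I would take an $L_w$-sentence $\phi$ with $T_W \models \phi$, an arbitrary $M \models T_w$, and aim to expand $M$ to an $L_W$-structure $N$ satisfying $T_W$. Since $\phi$ uses only $L_w$-symbols, $N \models \phi$ forces $M \models \phi$, which gives $T_w \models \phi$ as desired.

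To build such an expansion I would apply compactness to $T_W$ together with the $L_w$-elementary diagram of $M$. A finite subset uses only finitely many sentences $\psi_1, \ldots, \psi_n$ from $T_W$, with each $\psi_i \in T_{w_i}$, plus finitely many diagram formulas over $L_w$. The central step is to choose a single $v \in W$ with $L_w \cup L_{w_1} \cup \cdots \cup L_{w_n} \subseteq L_v$; the reduct hypothesis then yields $T_w, T_{w_1}, \ldots, T_{w_n} \subseteq T_v$, so all the $\psi_i$ lie in $T_v$. A standard subsidiary compactness argument (the usual fact that a model of a reduct expands to a model of the larger theory, whose proof uses exactly the reduct hypothesis applied to a universal closure) lets me expand $M$ to some $M' \models T_v$, and $M'$ automatically satisfies the chosen finite subset. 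Finite consistency in hand, the main compactness argument supplies $N$.

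The main obstacle is pure bookkeeping: producing the single $v$ that absorbs $L_w$ and all the $L_{w_i}$ simultaneously. This amounts to directedness of the collection, which is automatic for sufficient collections of finite subsignatures (as already noted in the setup) and holds trivially in the principal application, the chain $L_q$ for $\mathbb{Q}$ACFA, where the subsignatures are linearly ordered by inclusion. Once this point is handled, no further content is required - everything else is routine first-order model theory.
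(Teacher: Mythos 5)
The paper gives no proof for this lemma, so there is nothing to compare against. Your compactness argument is sound given directedness, though the model-expansion detour is avoidable: once the finite set $\{\psi_1, \ldots, \psi_n\}$ of axioms from $T_W$ witnessing $T_W \vdash \phi$ is absorbed, together with $L_w$, into a single $L_v$, the coherence hypothesis gives $\psi_i \in T_{w_i} = T_v|_{L_{w_i}} \subseteq T_v$ directly, so $T_v \vdash \phi$ and hence $\phi \in T_v|_{L_w} = T_w$, with no models needed.

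The substantive issue is the one you dismiss as bookkeeping. Producing a single $v$ with $L_w \cup L_{w_1} \cup \cdots \cup L_{w_n} \subseteq L_v$ is exactly directedness, and directedness is not a consequence of the paper's standing hypothesis that the family $\{L_w\}$ is merely sufficient; moreover, the conclusion of the lemma can actually fail for sufficient but undirected families. For a minimal example, let $L_W$ be the disjoint union of two infinite sets of propositional atoms $\{P_i\}_{i \in \omega}$ and $\{Q_j\}_{j \in \omega}$; let $L_{u_n}$ contain all $P_i$ together with the $Q_j$ for $j < n$, and $L_{v_n}$ dually; let $T_{u_n}$ be the deductive closure in $L_{u_n}$ of $\{P_0 \leftrightarrow P_i : i \in \omega\} \cup \{Q_j : j < n\}$, and $T_{v_n}$ dually. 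Then the collection is sufficient (any finite subsignature is absorbed by some $L_{u_n}$), the coherence hypothesis holds (reducts among comparable $L_{u_n}$'s, and among comparable $L_{v_n}$'s, match up; no $L_{u_n}$ is comparable to any $L_{v_m}$), yet $T_W \supseteq T_{v_1} \ni P_0$ while $P_0 \notin T_{u_0}$, so the reduct of $T_W$ to $L_{u_0}$ strictly contains $T_{u_0}$. So directedness must be treated as a genuine hypothesis; it does hold in the $\mathbb{Q}$ACFA application and for any sufficient collection of finite subsignatures, but it is not free.
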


\textbf{From now on, $T_W$ is an $L_W$-theory, and $T_w$ are the reducts of $T_W$ to $L_w$.}
Similarly, given an $L_W$-structure $M_W$, we denote the reduct of $M_W$ to $L_w$ by $M_w$, $\acl_W$ and $\acl_w$ are algebraic closure in the (reducts to the) corresponding signatures, and $\inde^w$ is an independence notion (usually, non-forking) in the reduct. The subscript $W$ for the full signature is often dropped.

This is an extremely tame notion of ``limit theory'', where the properties of $T_W$ are very tightly controlled by the properties of $T_w$'s. Chris Laskowski has some results about stable theories $T_W$ that can be obtained from a collection of superstable theories $T_w$ by this construction.
 Taking this sort of limit clearly commutes with the construction of $M^{eq}$.

\begin{lem} Suppose that $T_W$ is a complete theory, and let $L'_W$ be the signature of $(T_W)^{eq}$ and $L'_w$ be the signatures of $(T_w)^{eq}$ for each $w \in W$. Then $\{ L'_w \sthat w \in W \}$ is a sufficient collection of subsignatures of $L'_W$, the reduct of $(T_W)^{eq}$ to $L'_w$ is precisely $(T_w)^{eq}$, and $\cup_w (T_w)^{eq} = (T_W)^{eq}$. \end{lem}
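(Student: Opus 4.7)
The plan is to unpack the construction of $L'_w$ and $L'_W$ and reduce all three claims to the same sufficiency/completeness argument used in the previous lemma. Recall that $L'_W$ is obtained from $L_W$ by adding, for each $L_W$-definable equivalence relation $E$ on a product of sorts, a quotient sort $S_E$ with projection $\pi_E$, and symbols for every $L_W$-definable function and relation involving these new sorts; and $L'_w$ is obtained analogously from $L_w$-definable data. First I would identify $L'_w$ canonically with a subsignature of $L'_W$: every $L_w$-definable equivalence relation (respectively function, relation) is also $L_W$-definable via the same formula, so the sort $S_E$ produced in $L'_w$ is naturally identified with the sort of the same name in $L'_W$. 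This identification is legitimate because in a model $M_W \models T_W$, the interpretation of $S_E$ in the $L'_w$-reduct of $M_W^{eq}$ coincides with its interpretation in $(M_w)^{eq}$, since $T_W$ and $T_w$ agree on whether a given $L_w$-formula defines an equivalence relation.

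Next I would verify sufficiency. A finite subset $S \subset L'_W$ involves finitely many sort, function, and relation symbols, each encoded by a single $L_W$-formula (the defining formula of an equivalence relation, a function graph, or a relation). The total collection of $L_W$-symbols appearing across these finitely many encoding formulas is finite, so by sufficiency of $\{ L_w \sthat w \in W \}$ it is contained in some $L_w$. Then every encoding formula is already an $L_w$-formula, so every symbol of $S$ is in $L'_w$.

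The same observation delivers equality of reducts: the sorts and symbols produced when forming $(T_w)^{eq}$ from $L_w$-definable data in $T_w$ are literally the same as those obtained from $L_w$-definable data in $T_W$, so the $L'_w$-reduct of $(T_W)^{eq}$ is $(T_w)^{eq}$ on the nose. For the union claim, the inclusion $\cup_w (T_w)^{eq} \subset (T_W)^{eq}$ is immediate, since any $(T_w)^{eq}$-sentence holds in the $L'_w$-reduct of $M_W^{eq}$ for every $M_W \models T_W$. Conversely, any $\phi \in (T_W)^{eq}$ uses only finitely many symbols, so by the sufficiency step $\phi$ is an $L'_w$-sentence for some $w$; completeness of $T_w$ (inherited from completeness of $T_W$) gives completeness of $(T_w)^{eq}$, so either $\phi \in (T_w)^{eq}$, which is what we want, or $\neg\phi \in (T_w)^{eq}$, which by the first inclusion would contradict $\phi \in (T_W)^{eq}$.

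The hard part will be nothing more than careful bookkeeping around the identification of sort symbols between $L'_w$ and $L'_W$; there is no new mathematical content, but one must be sure that ``the same formula'' really produces ``the same sort'' on both sides and that projection maps and the encoded function/relation symbols match up under the reduct.
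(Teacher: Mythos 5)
The paper gives no proof of this lemma; it is preceded only by the remark that ``taking this sort of limit clearly commutes with the construction of $M^{eq}$,'' and the lemma is left as a routine verification. Your proof correctly fills in that verification, and all three claims are handled soundly. In particular you catch the genuine subtlety, namely that whether a formula counts as an equivalence relation is theory-dependent, not merely signature-dependent, and you correctly observe that $T_w$ and $T_W$ must agree on this for $L_w$-formulae (since ``$E$ is an equivalence relation'' is itself an $L_w$-sentence, this follows from $T_w$ being the reduct of $T_W$).

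Two minor remarks. First, your description of $L^{eq}$ as also adding ``symbols for every $L_W$-definable function and relation involving these new sorts'' is not the standard construction; $L^{eq}$ ordinarily adds only the quotient sorts $S_E$ and the canonical projections $\pi_E$, and adding names for all definable objects would turn $T^{eq}$ into its Morleyization. This does not affect your argument, since the sufficiency step works the same way: every new symbol is tagged by a single defining formula involving finitely many $L_W$-symbols. But it is worth matching the usual convention to avoid confusion about what $T^{eq}$ proves. Second, your proof of $(T_W)^{eq} \subset \cup_w (T_w)^{eq}$ via completeness of $(T_w)^{eq}$ works, but it is a slight detour: once you know $\phi$ is an $L'_w$-sentence and lies in $(T_W)^{eq}$, it is by definition in the reduct of $(T_W)^{eq}$ to $L'_w$, which your second claim already identifies with $(T_w)^{eq}$. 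No appeal to completeness is needed.
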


Properties that are $\forall \exists$ in formulae and properties that are evaluated one formula at a time ``pass to this limit'' in the sense that they hold $T_W$ whenever they hold in all $T_w$.

\begin{prop}
 For each of the following properties, if all $T_w$ have it, then $T_W$ also has it. List of properties: consistency, axiomatizations, completeness, quantifier elimination, partial quantifier elimination such as model-completeness, elimination of imaginaries, stable embeddedness of some definable set, characterization of algebraic closure...\end{prop}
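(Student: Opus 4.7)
The unifying observation is that each $L_W$-formula, being a finite string, lies in $L_w$ for some $w$, and by directedness any finite collection of $L_W$-formulae lies jointly in a single $L_w$. Every item on the list then follows by the uniform recipe ``take the relevant formula(e), descend to an $L_w$ containing them, invoke the property of $T_w$, ascend to $L_W$.''

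Explicitly: for \emph{consistency}, compactness reduces to showing that each finite $\Delta \subset T_W = \bigcup_w T_w$ is consistent, which follows by placing $\Delta$ in a single $T_w$; for \emph{axiomatizations}, the union of the axiom sets of the $T_w$ preserves syntactic shape ($\forall$, $\forall \exists$, \ldots); for \emph{completeness}, any $L_W$-sentence is an $L_w$-sentence decided by $T_w \subseteq T_W$; for \emph{quantifier elimination}, given $\phi \in L_W$ pick $w$ with $\phi \in L_w$, and the $T_w$-equivalent quantifier-free $L_w$-formula $\psi$ supplied by QE in $T_w$ remains $T_W$-equivalent and quantifier-free in $L_W$. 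Any partial QE whose target fragment is preserved under enlarging the signature---notably model-completeness, where existential formulae remain existential---succumbs to the same argument. For \emph{elimination of imaginaries}, an $L_W$-definable equivalence relation $E$ is $L_w$-definable for some $w$, and EI in $T_w$ yields an $L_w$-definable coding function which remains a coding function in $L_W$ (using the preceding lemma on $(T_W)^{eq}$ to ensure the sort containing the code is available). For \emph{stable embeddedness} of a definable set $D$, say defined in $L_u$: any $L_W$-definable $X \subseteq D^n$ uses finitely many symbols, so lies in $L_w$ for some $w$ with $L_u \subseteq L_w$, where stable embeddedness in $T_w$ produces parameters from $D$ defining $X$ in $L_w$ hence in $L_W$. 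For \emph{characterization of algebraic closure}, algebraicity of a point over $A$ is witnessed by a single formula, so $\acl_W(A) = \bigcup_w \acl_w(A)$; whenever each $\acl_w(A)$ admits a fixed description (e.g.\ field-theoretic algebraic closure of the substructure generated by $A$), that description passes to the directed union.

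The only real subtlety is multisorted bookkeeping: some $L_w$ may lack sorts of $L_W$, so one must check that closure operations and coding functions transported up from $T_w$ interact correctly with the ambient sort structure. The $(T_W)^{eq}$ lemma settles this for imaginaries; for algebraic closure and stable embeddedness it amounts to the observation that any specific element, parameter, or imaginary class lies in \emph{some} $L_w$, so one is always free to enlarge $w$ to capture all the sort-theoretic data at hand. I expect this clerical step to be the only nontrivial part of the argument.
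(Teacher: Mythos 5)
Your proof is correct and follows essentially the same strategy as the paper: reduce each finite bundle of $L_W$-formulae into some $L_w$, invoke the property of $T_w$, and pass the witnessing data back up. One small quibble: you attribute the ability to place finitely many formulae in one $L_w$ to directedness, but it is really sufficiency that does this work (a finite set of formulae mentions only finitely many symbols, and sufficiency puts any finite subsignature inside some $L_w$); the proposition needs no directedness hypothesis.
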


\begin{proof}
Let us say more precisely what we mean by some of the properties. \begin{itemize}
\item If a set $A_w$ of $L_w$-sentences axiomatizes $T_w$ for each $w$,\\ then $A_W := \cup_{w \in W} A_w$ axiomatizes $T_W$.
\item If a set $\Delta_w$ of $L_w$-formulae is an elimination set for $T_w$ for each $w$, then $\Delta_W := \cup_{w \in W} \Delta_w$ is an elimination set for $T_W$.
\item If each $T_w$ has elimination of imaginaries, then so does $T_W$.
\item If for each $w$, $B_w$ is a set of $L_w$-formulae such that in models of $T_w$, algebraic closure is always witnessed by some formula from $B_w$; then $B := \cup_{w\in W} B_w$ does the same for $T_W$.
\end{itemize}

Recall that $\Delta$ is an \emph{elimination set} for $T$ if each formula is equivalent to some formula from $\Delta$ modulo $T$. For example, $T$ has quantifier elimination (resp. is model-complete) if and only if the set of quantifier-free (resp. existential) formulae is an elimination set for $T$.

All of these are equally obvious; for example and amusement, we prove the last. Take $a,b \in M \models T_W$ such that $b \in \acl_W(a)$. Let $\phi(x,y)$ be the formula witnessing this; it is an $L_w$-formula for some $w$, and $\phi(a, y)$ has finitely many solutions in $M_w$, so there is a formula $\psi(x,y) \in B_w$ with $M_w \models \psi(a,b)$ and  $\psi(a,y)^{M_w} = \psi(a,y)^M$ is finite. \end{proof}

\begin{prop} If $S_W$ is another $L_W$-theory such that $T_w$ is the model-companion of $S_w$ for each $w$, then $T_W$ is the model-companion of $S_W$.\end{prop}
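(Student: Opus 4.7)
The plan is to verify the two defining conditions of a model-companion: $T_W$ is model-complete, and $T_W$ and $S_W$ are mutually model-consistent (every model of one embeds into a model of the other). Model-completeness is immediate from the previous proposition, since each $T_w$ is model-complete (as a model-companion of $S_w$) and model-completeness is among the partial-quantifier-elimination properties that pass to $T_W$.

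For mutual model-consistency I would use a compactness argument. Take $M \models T_W$; to embed $M$ into a model of $S_W$, consider the elementary diagram-like theory $S_W \cup \operatorname{Diag}(M)$ in $L_W(M)$ and show every finite subset $F$ is consistent. Any such $F$ mentions only finitely many symbols of $L_W$ and finitely many constants from $M$, so by sufficiency there is some $w \in W$ with all these symbols and the sorts of all these constants in $L_w$; hence the constants of $F$ correspond to genuine elements of the reduct $M_w$, and $F \subseteq S_w \cup \operatorname{Diag}(M_w)$. Since $T_w$ is the model-companion of $S_w$ and $M_w \models T_w$, the model $M_w$ embeds into some model of $S_w$, so $S_w \cup \operatorname{Diag}(M_w)$ is consistent, and in particular $F$ is. The reverse direction, starting from $N \models S_W$ and embedding $N$ into a model of $T_W$, is symmetric: one runs the same compactness argument on $T_W \cup \operatorname{Diag}(N)$.

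The main (mild) subtlety to handle carefully is the multisorted nature of the setup, since a subsignature $L_w$ may omit sorts that are present in $L_W$. I would make a brief bookkeeping remark that, in a finite $F \subseteq S_W \cup \operatorname{Diag}(M)$, the sort of each named constant is already among the sorts appearing in the (finitely many) symbols of $F$, so after choosing $w$ large enough to contain those symbols, all constants of $F$ lie in the universe of $M_w$. With that in hand, the reduction of $F$ to an $L_w$-consistency statement is justified, and one also needs to observe that the $L_w$-sentences in $S_W$ are indeed in $S_w$, which holds by the convention that $S_w$ is the reduct of $S_W$ to $L_w$ (cf.\ Lemma \ref{glue-theories}).

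I do not anticipate a significant obstacle here; the only thing to be vigilant about is the sort-counting step above, so that the compactness reduction actually lands inside the known model-companion pair $(S_w, T_w)$.
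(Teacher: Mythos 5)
Your proof is correct and follows essentially the same route as the paper's: appeal to the preceding proposition for model-completeness, then a compactness argument on $S_W$ together with the atomic diagram, reducing each finite subset to an $L_w$-consistency statement and invoking that $M_w \models T_w$ embeds into a model of $S_w$. The brief aside about sorts is a fair extra point of bookkeeping that the paper passes over silently, but it does not change the argument.
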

\begin{proof}
 We already know that $T_W$ is model-complete, because all $T_w$ are, and model-completeness is equivalent to quantifier elimination down to existentials. To show that every model $M$ of $S_W$ embeds into some model of $T_W$, consider the atomic $L_W$-diagram  $\Gamma$ of $M$, and let $\Sigma := \Gamma \cup T_W$. It is sufficient to show that $\Sigma$ is satisfiable. If $F \subset \Sigma$ is finite, then $F$ is a set of $L_w$-sentences for some $w \in W$, satisfiable by embedding the model $M_w$ of $S_w$ into some model of $T_w$. Exactly the same argument shows that every model of $T_W$ embeds into some model of $S_W$. \end{proof}

 \subsection{Combinatorial Stability} \label{combostabsec}
Many properties of theories are local in formulae, that is, they can be verified by looking at one formula $\phi(x,y)$ at a time. As long as the verification for each formula is sufficiently uncomplicated, these properties pass up from all $T_w$ to $T_W$.

\begin{lem} \label{combistabformula}
 If an $L_W$-formula $\phi(x; y)$ has the Order Property (resp., Independence Property; Tree Property) in $T_W$, then the same formula has the same property in $T_w$ for any $L_w$ containing all symbols of $\phi$.
\end{lem}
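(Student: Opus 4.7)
The plan is to unfold the definitions. Each of the three properties (Order, Independence, Tree) is a statement asserting the existence of a certain configuration of parameters in some model of the theory on which $\phi$ takes a prescribed pattern of truth values. Since a reduct of a model preserves the truth of every formula in the reduct's signature, the same witnesses in the full-signature model produce the same witnesses in the appropriate reduct.

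Concretely, fix $L_w$ containing all symbols of $\phi(x;y)$; in particular the sorts of $x$ and $y$ are $L_w$-sorts, so tuples of these sorts exist in every $L_w$-structure as well. Suppose first that $\phi$ has the Order Property in $T_W$: there is some $M_W \models T_W$ and sequences $(a_i)_{i<\omega}$, $(b_j)_{j<\omega}$ from $M_W$ in the appropriate sorts with $M_W \models \phi(a_i,b_j)$ iff $i<j$. The reduct $M_w$ is a model of $T_w$ (by our standing setup and Lemma~\ref{glue-theories}), the tuples $a_i, b_j$ lie in sorts of $L_w$ and therefore in $M_w$, and because $\phi$ is an $L_w$-formula, $M_w \models \phi(a_i,b_j) \iff M_W \models \phi(a_i,b_j)$. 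Hence the same indexed sequences witness OP for $\phi$ in $T_w$.

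The same template handles the other two cases. For the Independence Property, take a sequence $(a_i)_{i<\omega}$ in $M_W$ such that, for every $S \subseteq \omega$, the partial type $\{\phi(a_i,y) : i \in S\} \cup \{\neg\phi(a_i,y) : i \notin S\}$ is consistent with $T_W$; pass to $M_w$ and observe that every such partial type, being made up of $L_w$-formulae, is witnessed by an element of $M_w$ since $M_w$ has the same universe (in the sorts we care about) as $M_W$. For the Tree Property, take an array $(a_\eta)_{\eta \in \omega^{<\omega}}$ witnessing $k$-inconsistency along levels and consistency along branches for $\{\phi(x,a_\eta)\}$; exactly the same parameters in $M_w$ witness TP for $\phi$ in $T_w$, as all the (in)consistency assertions involve only $L_w$-formulae.

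There is no real obstacle here: the lemma is essentially a tautology about reducts, and its point is to package the observation that these three properties are formula-local in a way that commutes with taking reducts. The substantive use of the lemma will be the contrapositive together with a formula-by-formula argument: any $L_W$-formula lies in some $L_w$, so if no $L_w$-formula has the relevant property in $T_w$, then by the lemma no $L_W$-formula has it in $T_W$, and $T_W$ inherits stability, NIP, or simplicity from the $T_w$'s.
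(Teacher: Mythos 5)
Your proof is correct and takes essentially the same approach as the paper: all three properties are configurations of parameters in a model with prescribed truth/consistency patterns for $\phi$, all of which are preserved on passing to the reduct $M_w$ because $\phi$ is an $L_w$-formula. One small imprecision: in the IP case you say the partial type ``is witnessed by an element of $M_w$,'' but consistency with $T_W$ plus the elementary diagram of $M_W$ does not give a realization in $M_W$ itself (only in some elementary extension); the right observation is simply that consistency with a theory is inherited by any subtheory, so consistency relative to $(M_W, \operatorname{Th}_{L_W})$ implies consistency relative to $(M_w, \operatorname{Th}_{L_w})$ — which the paper states by carrying the consistency of the $\phi$-type, not a realization, down to the reduct.
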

\begin{proof}
 For the order property, find a formula $\phi(x,y)$, a model $M_W$ of $T_W$, and parameters $\{a_i \sthat i \in \omega \}$ such that
 $ \pi_n :=  \{ \phi(x, a_i) \sthat i \leq n\} \cup \{ \neg \phi(x, a_i) \sthat i > n\}$
 is consistent for each $n \in \omega$. Then for any $L_w$ which contains the symbols in $\phi$, the reduct $M_w$ and the same parameters $\{a_i\}$ witness that the same formula $\phi(x,y)$ has the Order Property.

 For the Independence Property, the proof is identical, except that $\pi_n$ should be replaced by
 $\pi_S:=  \{ \phi(x, a_i) \sthat i \in S \} \cup \{ \neg \phi(x, a_i) \sthat i \not\in S \}$
for $S \subset \omega$.

 For the Tree Property (see Definition 2.19 in \cite{casasimp}), the set of parameters should be indexed by nodes of a tree, countably branching, of countable height; and the consistent $\phi$-types should be indexed by paths through the tree. Additionally, the $k$-Tree Property requires $k$-inconsistence of certain other $\phi$-types: for each node $s$ and any $k$ integers $i_0, \ldots i_{k-1}$, the formula $\wedge_{j \in k} \phi(x, a_{s\frown i_j})$ should be inconsistent. Still, this clearly passes to any reduct containing the symbols of $\phi$.
\end{proof}

Just as with quantifier elimination, Lemma \ref{combistabformula} also implies refinements of the next proposition. In particular, the observation that \emph{quantifier-free} stability is preserved will be useful later.

\begin{prop} All $T_w$ are stable (resp., dependent; simple) if and only if $T_W$ is. \end{prop}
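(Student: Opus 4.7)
The forward direction—all $T_w$ have the property implies $T_W$ does—is the contrapositive of Lemma \ref{combistabformula}. If $T_W$ has the Order Property (resp.\ Independence Property, Tree Property) witnessed by an $L_W$-formula $\phi$, then $\phi$ uses only finitely many symbols, which by sufficiency of $\{L_w \sthat w \in W\}$ are all contained in some $L_w$; the lemma then transfers the witness to $T_w$, so $T_w$ fails the corresponding property.

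For the converse, the main new ingredient is a conservativity observation: if $\Sigma$ is a set of $L_w$-formulas (possibly over new constants) consistent with $T_w$, then $\Sigma$ is consistent with $T_W$. The proof is a one-line compactness argument. Suppose $T_W \cup \Sigma$ were inconsistent; then for some finite $\Sigma_0 \subseteq \Sigma$ we would have $T_W \models \forall \vec{c}\, \neg \bigwedge \Sigma_0(\vec{c})$ after universally quantifying away the new constants. But this is an $L_w$-sentence, hence a member of $T_w$ by definition of reduct, contradicting the $T_w$-consistency of $\Sigma$.

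Given this, each of the three cases follows by unpacking the standard witness. The OP and IP witnesses are precisely the consistency with $T_w$ of certain explicit sets of $L_w$-formulas in new constants, so they lift to $T_W$ directly by the conservativity observation, and the same formula $\phi \in L_w \subseteq L_W$ then witnesses the property for $T_W$. The TP case splits in two: the consistency of the path partial types lifts by conservativity, while the $k$-inconsistency statements (universally quantified $L_w$-sentences provable from $T_w$) transfer to $T_W$ automatically via $T_w \subseteq T_W$. The main bookkeeping obstacle is this split in the TP case—keeping straight which portion of the definition of the Tree Property is a consistency claim (requiring conservativity) and which is an inconsistency claim (requiring only containment of theories). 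Everything else, including the analogous refinement for quantifier-free versions flagged in the paper, is immediate from the observation that neither Lemma \ref{combistabformula} nor the conservativity observation cares about the complexity of $\phi$.
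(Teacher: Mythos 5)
Your overall structure matches the paper's: one direction is the contrapositive of Lemma \ref{combistabformula}, and the other direction is the fact that stability, dependence, and simplicity pass from $T_W$ to its reducts $T_w$. Your conservativity observation is a correct and worthwhile fleshing-out of what the paper dispatches with ``all three good properties obviously pass to all reducts'' --- it \emph{is} precisely the compactness argument hiding behind that word ``obviously,'' since a model of $T_w$ witnessing OP need not be the reduct of a model of $T_W$, so one really does have to lift a consistency claim rather than expand a structure.

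However, your account of the TP case is slightly off. The $k$-inconsistency clauses in the definition of the Tree Property are not universally quantified $L_w$-sentences provable from $T_w$: they are conditions on the particular tree of parameters $(a_s)$, asserting that for each node $s$ the family $\{\phi(x,a_{s\frown n}) : n<\omega\}$ is $k$-inconsistent. There is no first-order $\psi(\vec y)$ saying ``$y_0,\dots,y_{k-1}$ are siblings'' that would let you quantify them out into a theorem of $T_w$. (You may be pattern-matching against the rosiness characterization in Fact \ref{adler-rosy-fact}, where the inconsistency of $\bigwedge_{i<k}\phi(\bar x; u_i\bar v)\wedge\bigwedge u_i\ne u_j$ genuinely \emph{is} a formula-level sentence.) As written, your argument lifts only the path-consistency half of $\Sigma$ to a model of $T_W$, and there is no reason the interpretations of the new constants $c_s$ in the resulting model will give $k$-inconsistent siblings. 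The repair is to include the inconsistency clauses in $\Sigma$ as the $L_w$-sentences $\neg\exists x\,\bigwedge_{j<k}\phi(x,c_{s\frown n_j})$ in the new constants, and then your conservativity observation transfers everything at once; the ``split'' you describe is unnecessary, and its second half is incorrect as a freestanding step.
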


\begin{proof}
Stability is the lack of Order Property, simplicity is the lack of Tree Property, dependence is the lack of the Independence Property. All three good properties obviously pass to all reducts. Lemma \ref{combistabformula} shows that each of the three bad properties passes to some reduct in our setting. \end{proof}

Similar proofs will work for any combinatorial property which is truly local in formulae, that is, which does not require the witnesses to be have the same type \emph{in the full signature}, nor to be indiscernible \emph{in the full signature}.

 Rosiness is not characterized by a Shelahian combinatorial property, as far as I can tell. However, unwrapping the characterization given in \cite{adlerth} in terms of local dividing ranks yields the following fact; see Appendix for the details of unwrapping. 

 \begin{fact} (\cite{adlerth}) \label{adler-rosy-fact}
 An $L$-theory $T$ is not rosy if and only if there are:
 a formula $\phi(\bar{x}; u \bar{v})$, an integer $k$, a model $M \models T$, and parameters $b^i, \bar{c}^i, \bar{d}^i$ and $b^{ij}$ for $i, j \in \omega$ in $M^{eq}$ such that \begin{itemize}
 \item $( \wedge_{i < k}\, \phi(\bar{x}; u_i \bar{v}) ) \wedge ( \wedge_{i\neq j <k}\, u_i \neq u_j )$
 is inconsistent;
 \item $\{ \phi(x, b^i, \bar{c}^i) \sthat i \in \omega \}$ is consistent;
\item $\operatorname{tp}_L( b^{ij}, \bar{d}^i / A_i) = \operatorname{tp}_L( b^{i}, \bar{c}^{i} / A_i)$
where $A_i := \{ b^i, \bar{c}^i \sthat j < i \}$; and
\item $b^{ij} \neq  b^{ij'}$ for all $j \neq j'$.
\end{itemize}
\end{fact}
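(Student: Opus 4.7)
The plan is to unwrap Adler's characterization of rosiness from \cite{adlerth} into concrete combinatorial witnesses, in the same spirit as the witnesses for the order, independence, and tree properties in Lemma \ref{combistabformula}. Recall from \cite{adlerth}: $\phi(\bar x; a)$ \emph{strongly divides} over $A$ if $a \notin \acl(A)$ and $\{\phi(\bar x; a') \sthat a' \models \operatorname{tp}(a/A)\}$ is $k$-inconsistent for some $k$; it \emph{thorn-divides} over $A$ if it strongly divides over some $Ac$; and $T$ is rosy iff no formula admits an arbitrarily long consistent chain of successively thorn-dividing instances (equivalently, iff thorn-dividing has local character on real tuples).

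For the forward direction, I would first apply Adler's local-rank criterion to turn non-rosiness of $T$ into the existence of a single formula $\phi(\bar x; \bar y)$ and parameters $(\bar a^i)_{i \in \omega}$ in some $M \models T$ such that $\{\phi(\bar x; \bar a^i) \sthat i \in \omega\}$ is consistent and each $\phi(\bar x; \bar a^i)$ thorn-divides over $A_i := \{\bar a^j \sthat j < i\}$. A standard compactness plus Erd\H{o}s--Rado argument pulls out a single $\phi$ and a uniform inconsistency bound $k$ from sequences of longer-and-longer finite chains.

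Then I would unpack thorn-dividing level by level. For each $i$ there are a companion tuple $\bar d^i$ and a splitting $\bar y = u\bar v$ (with $u$ the ``strongly dividing'' coordinate and $\bar v$ played by $\bar d^i$), inducing a partition $\bar a^i = (b^i, \bar c^i)$ such that $\phi(\bar x; b^i, \bar c^i)$ strongly divides over $A_i \bar d^i$. Strong dividing then directly delivers the four bullets: (i) $k$-inconsistency of the conjunction whenever the $u_i$'s are pairwise distinct; (ii) consistency of the whole chain, inherited from the original chain; (iii) realizations $b^{ij}$ of $\operatorname{tp}_L(b^i / A_i \bar d^i)$, which together with the fixed $\bar d^i$ realize $\operatorname{tp}_L(b^i, \bar c^i / A_i)$; and (iv) infinitely many pairwise distinct $b^{ij}$, since $b^i \notin \acl(A_i \bar d^i)$ by the definition of strong dividing.

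For the converse, the listed data exhibit an explicit infinite thorn-dividing chain: by the third and fourth bullets the $b^{ij}$ form an infinite family of $A_i \bar d^i$-conjugates of $b^i$ with $b^i \notin \acl(A_i \bar d^i)$, and by the first bullet $\{\phi(\bar x; b^{ij}, \bar d^i) \sthat j \in \omega\}$ is $k$-inconsistent, so $\phi(\bar x; b^i, \bar c^i)$ strongly divides over $A_i \bar d^i$ and hence thorn-divides over $A_i$; combined with the consistency in the second bullet, this is exactly the failure of local character for thorn-forking. The main obstacle I foresee is keeping the two parameter systems straight --- the base chain $(b^i, \bar c^i)$ versus the conjugate array $(b^{ij}, \bar d^i)$ at each level --- and carrying through the uniformization of $\phi$ and $k$ across all levels, which is routine compactness but requires care when strongly-dividing witnesses live in imaginaries and force a passage to $M^{eq}$.
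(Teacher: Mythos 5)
Your proposal is correct in substance and captures the same underlying combinatorics, but it organizes the argument along a slightly different axis than the paper. The paper proceeds by purely syntactic unwrapping of Adler's local dividing rank framework: it starts from Theorem 2.37(3) of \cite{adlerth} ($T$ is rosy iff $D_{\phi,\psi}(\emptyset)<\infty$ for every $(\phi,\psi)\in\Xi_M$), then mechanically unfolds $\Xi_M$ (Definitions 2.35, 2.9), dividing patterns (Definitions 2.23, 2.20), and $(\phi,\psi)$-dividing (Definition 2.10), and finally observes that the base set $C$ may be taken empty. Because the local ranks are defined one formula-pair at a time, the fixed $\phi$, the fixed $\psi$, and the uniform $k$ (coming from $\psi$ being a $k$-inconsistency witness) are built into the statement from the outset; no uniformization step is needed. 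You instead travel through the semantic notions of strong dividing and thorn-dividing and then invoke a compactness/Erd\H{o}s--Rado pass to pin down a single $\phi$ and a uniform $k$. That route is perfectly legitimate and arguably more intuitive, but it foregrounds a uniformization step that the $(\phi,\psi)$-rank formulation is designed to make invisible, and it leans on an informal ``local character on real tuples'' phrasing of rosiness that should be pinned to a precise statement in \cite{adlerth}.

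Two small imprecisions worth noting. First, the third bullet asserts $\operatorname{tp}_L(b^{ij},\bar d^i/A_i)=\operatorname{tp}_L(b^i,\bar c^i/A_i)$; this does \emph{not} in general say that $b^{ij}$ realizes $\operatorname{tp}_L(b^i/A_i\bar d^i)$ unless $\bar d^i=\bar c^i$ (which one may arrange, but need not). What it does give, and what the converse direction actually uses, is that the $b^{ij}$ are pairwise $A_i\bar d^i$-conjugate to one another and pairwise distinct, hence $b^{i0}\notin\acl(A_i\bar d^i)$ and the family witnesses strong dividing over $A_i\bar d^i$. Second, you correctly flag that witnesses may live in $M^{\mathrm{eq}}$; the paper's statement places all parameters in $M^{\mathrm{eq}}$ precisely for this reason, so that is aligned. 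Overall: same mathematics, a somewhat more semantic packaging, with a uniformization step that Adler's $(\phi,\psi)$ formalism renders unnecessary.
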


 It may well be that the characterization in terms of another rank, related to equivalence relations, given in \cite{alfclif07}, is just as good for this purpose. 

\begin{prop} All $T_w$ are rosy if and only if $T_W$ is. \end{prop}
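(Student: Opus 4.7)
The plan is to prove each direction of the biconditional separately, since rosiness lacks a formula-level combinatorial characterization parallel to Lemma~\ref{combistabformula} that would permit the uniform treatment used for stability, simplicity, and dependence.

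For the direction ``$T_W$ rosy $\Rightarrow$ each $T_w$ rosy'', I appeal to the standard fact that rosiness is preserved under reducts (see \cite{adlerth}): since each $T_w$ is by construction a reduct of $T_W$, rosiness of $T_W$ immediately gives rosiness of each $T_w$.

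For the converse, I argue the contrapositive using Fact~\ref{adler-rosy-fact}. Assume $T_W$ is not rosy. Fact~\ref{adler-rosy-fact} supplies a formula $\phi(\bar{x};u\bar{v}) \in L_W$, an integer $k$, a model $M_W \models T_W$ (taken sufficiently saturated; and, since rosiness of $T_W$ is equivalent to rosiness of all its completions, I may further assume $T_W$ is complete, so that the previous lemma on $(T_W)^{eq}$ applies), and parameters $b^i,\bar{c}^i,\bar{d}^i,b^{ij} \in M_W^{eq}$ satisfying conditions (a)--(d). Since $\phi$ has only finitely many symbols and the parameters inhabit only finitely many $L_W^{eq}$-sorts, sufficiency of $\{L_w\}$ together with the sufficiency of $\{L_w^{eq}\}$ as a subsignature collection of $L_W^{eq}$ lets me pick $w \in W$ with $\phi \in L_w$ and with all the relevant imaginary sorts lying in $L_w^{eq}$. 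Then the parameters live in $M_w^{eq} := M_W^{eq}\restriction L_w^{eq}$, which shares underlying sets and $L_w^{eq}$-interpretations with $M_W^{eq}$. Verification of the four conditions of Fact~\ref{adler-rosy-fact} for $T_w$, $\phi$, $M_w$ is then mechanical: (a) the $L_w$-formula $\psi := (\bigwedge_{i<k}\phi(\bar{x};u_i\bar{v})) \wedge (\bigwedge_{i\neq j}u_i\neq u_j)$ has no realization in $M_W^{eq}$, hence none in $M_w^{eq}$; (b) any realization of $\{\phi(x,b^i,\bar{c}^i) : i\in\omega\}$ in the saturated $M_W^{eq}$ also realizes it in $M_w^{eq}$; (c) equality of $L_W$-types restricts to equality of $L_w$-types; (d) the inequalities $b^{ij}\neq b^{ij'}$ are unaffected. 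This witnesses non-rosiness of $T_w$, contradicting the assumption.

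The main subtlety is the imaginary-sort bookkeeping: one must enlarge $w$ so that $L_w^{eq}$ covers not only the symbols of $\phi$ but also all sort-defining data for the parameters. This is exactly what the preceding lemma about $\{L_w^{eq}\}$ being sufficient inside $L_W^{eq}$ buys us; once this is arranged, the four Fact~\ref{adler-rosy-fact} conditions transfer without further work because $L_w \subset L_W$ and the two $eq$-structures agree on the relevant sorts.
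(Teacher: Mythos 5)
Your proof is correct and takes essentially the same route as the paper: the easy direction cites preservation of rosiness under reducts, and the hard direction argues the contrapositive via Fact~\ref{adler-rosy-fact}, choosing $w$ large enough to cover $\phi$ and the sort-defining equivalence relations and then checking that all four conditions transfer (with the type-equality condition only getting weaker in the reduct). The imaginary-sort bookkeeping you flag is handled in the paper by exactly the same device.
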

\begin{proof}
 One direction is easy: rosiness is known to pass to reducts. 
  For the other direction, suppose that $T_W$ is not rosy. Then there are: a model $M_W$ of $T_W$, an $L_W$-formula $\phi(\vec{x}, y, \vec{z})$, sorts $S_x, S_y, S_z$ in $(M_W)^{eq}$, and parameters $b^i, \bar{c}^i, \bar{d}^i$ and $b^{ij}$ in the appropriate sorts satisfying the four conditions in the fact above. Some $L_w$ contains $\phi$ and the formulae defining the equivalence relations whose quotients are the sorts $S_x, S_y, S_z$. The same $M_w$, $\phi(\vec{x}, y, \vec{z})$, $\{{c}_i \sthat i \in \omega \}$, and $\{ b_{ij} \sthat i,j \in \omega \}$ now witness that $T_w$ is not rosy: of the four requirements in the fact above, the third requirement is only easier in the reduct, and others are unchanged. \end{proof}


 Stability, simplicity, and rosiness are interesting properties because they are characterized by the presence of a reasonable notion of independence, so in effect we have shown that if each $T_w$ has a decent independence relation $\inde^w$ in the sense of \cite{adlerth}, then $T_W$ should have a decent independence relation $\inde^W$. It is natural to try to characterize $\inde^W$ in terms of the $\inde^w$s.

 The simplest kind of dependence is algebraic closure. As signature grows, algebraic closures grow. This can turn forking both on and off. If $A \inde_C B$ in the reduct, and $A$ falls into the algebraic closure of $B$ in the expansion, forking is ``turned on''. If $A \nind_C B$ in the reduct, and $B$ falls into the algebraic closure of $C$ in the expansion, forking is ``turned off''.

 So, the real quest is to characterize $A \inde^W_C B$ in terms of $A \inde^w_C B$ in the special case when $C \subset A, B$ and all three are algebraically closed in the full signature. The following is obvious in stable theories, via definability of types.

\begin{conj} Suppose that $M_W$ is a monster model of $T_W$, that $T_W$ and all $T_w$ are simple (resp., rosy) and $\inde^W$, $\inde^w$ are the forking (resp., thorn-forking) independence relations on $M_w$. Let $C \subset A, B \subset M_W^{eq}$ be algebraically closed in the sense of $T_W^{eq}$. Then
 $$ A \inde^W_C B \mbox{ if and only if } A \inde^w_C B \mbox{ for all } w$$ \end{conj}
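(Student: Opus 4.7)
The plan is to split the biconditional into its two directions, handling the easy one by pure formula chasing and reducing the hard one to the argument behind Theorem \ref{twopurthm}, noting where the hypothesis of elimination of hyperimaginaries is essential and why removing it is delicate.

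For the easy direction, $A \inde^w_C B$ for all $w$ implies $A \inde^W_C B$, I would argue contrapositively. Suppose $A \nind^W_C B$, so there are a formula $\phi(\bar x, b) \in L_W$ in $\operatorname{tp}^W(\bar a / BC)$ for some finite tuple $\bar a$ from $A$, an $L_W$-indiscernible sequence $(b_i)_{i < \omega}$ over $C$ with $b_0 = b$, and an integer $k$ such that $\{\phi(\bar x, b_i) \sthat i < \omega\}$ is $k$-inconsistent. Since $L_W = \bigcup_w L_w$ and $\phi$ uses finitely many symbols, $\phi$ lies in $L_w$ for some $w$; every $L_W$-indiscernible sequence is automatically $L_w$-indiscernible, and $k$-inconsistency of $\{\phi(\bar x, b_i)\}$ is a statement using only symbols of $\phi$, hence holds in $M_w$ as well. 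The same data therefore witnesses $L_w$-dividing of $\phi(\bar x, b)$ over $C$, giving $A \nind^w_C B$. The same argument works for thorn-dividing in the rosy case.

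For the hard direction, $A \inde^W_C B$ implies $A \inde^w_C B$ for all $w$, suppose $A \nind^w_C B$ for some $w$. The naive move is to select a formula $\psi(\bar x, c) \in L_w$ in $\operatorname{tp}^w(\bar a / BC)$ that $L_w$-divides over $C$ via an $L_w$-indiscernible sequence $(c_i)$ with $c_0 = c$, extract by Ramsey an $L_W$-indiscernible sequence $(c'_i)$ preserving the $k$-inconsistency of $\{\psi(\bar x, c'_i)\}$, and hope this witnesses $L_W$-dividing of $\psi(\bar x, c)$. The snag is that the extracted sequence only satisfies $\operatorname{tp}^w(c'_0/C) = \operatorname{tp}^w(c/C)$, which need not extend to $L_W$-type equality, so we cannot $L_W$-automorphically move $(c'_i)$ to start at $c$. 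This is exactly the gap that Theorem \ref{twopurthm} closes via canonical bases: under EI of hyperimaginaries in $T_w$, the canonical base $\delta := Cb^w(\operatorname{lstp}^w(\bar a / BC))$ lives as an ordinary tuple of imaginaries in $M_w^{eq} \subseteq M_W^{eq}$, and $A \nind^w_C B$ is equivalent to $\delta \not\subseteq \operatorname{acl}^w(C)$. Because $C$ is algebraically closed in $T_W^{eq}$ it is also algebraically closed in $T_w^{eq}$, so this non-containment lifts to $\delta \not\subseteq \operatorname{acl}^W(C)$; a comparison between the $L_w$- and $L_W$-canonical bases of $\operatorname{tp}^W(\bar a / BC)$ (the $L_W$ one encoding at least as much as the $L_w$ one) then produces a witness of $L_W$-forking, contradicting $A \inde^W_C B$.

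The main obstacle to proving the conjecture as stated is dispensing with the hyperimaginary hypothesis. Without EI of hyperimaginaries in each $T_w$, the canonical base $\delta$ is only a hyperimaginary, whose bounded closure depends on the full signature in a way that does not transfer cleanly between reducts and the expansion, and one would have to compare bounded-closure operators across signatures directly. One route would be to appeal to the folklore conjecture that all simple theories eliminate hyperimaginaries, which would immediately upgrade Theorem \ref{twopurthm} to the simple case of the present conjecture. The rosy case is harder still, as no clean thorn-analogue of the canonical-base argument above is currently available; this is flagged by the paper as a genuine difficulty.
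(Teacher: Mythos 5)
Since the paper leaves this statement as an open conjecture, your proposal mirrors its treatment: the easy direction is the paper's Lemma \ref{forkingoesdown}, the conditional hard direction under elimination of hyperimaginaries is Fact \ref{exer35}, and the diagnosis of the obstacle matches the paper's discussion immediately after the lemma. Two technical corrections. In the easy direction, for the rosy case you cannot assume outright that some formula in $\operatorname{tp}^W(\bar a/BC)$ thorn-divides via an indiscernible sequence; thorn-forking of a formula only gives that it implies a finite disjunction of thorn-dividing formulas, so one must pass each disjunct down to $L_w$ separately, which is exactly what the paper's proof of Lemma \ref{forkingoesdown} does (for the simple case your shortcut is fine, since forking equals dividing for types). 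In the canonical-base sketch, the $L_w$-canonical base and its algebraic closure live in $M_w^{eq}$, so the comparison should be with $\operatorname{acl}^w(C \cap M_w^{eq})$ rather than $\operatorname{acl}^w(C)$, as $C$ may contain imaginaries outside $M_w^{eq}$; this is repaired by observing that $C$ being $\operatorname{acl}^W$-closed forces $C \cap M_w^{eq}$ to be $\operatorname{acl}^w$-closed. You also omit the paper's unconditional partial result, the proposition following Fact \ref{exer35}, showing the hard direction holds whenever $C$ is a model, via the coheir/Morley-sequence argument attributed to Hils.
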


Surprisingly, it is the left-to-right implication in the conjecture that is hard. The next lemma is the easy contrapositive of the right-to-left implication. The definitions of forking and dividing are from \cite{shbible}; the definitions of thorn-forking, thorn-dividing, and strong-dividing are from \cite{alf06}.

\begin{lem} \label{forkingoesdown}
Suppose that $M_W$ is a monster model of $T_W$, that $T_W$ and all $T_w$ are simple (resp., rosy) and $\inde^W$, $\inde^w$ are the forking (resp., thorn-forking) independence relations on $M_w$. Let $C \subset A, B \subset M_W^{eq}$ be algebraically closed in the sense of $T_W^{eq}$. If $ A \nind^W_C B$, then $ A \nind^w_C B$ for some $w \in W$. \end{lem}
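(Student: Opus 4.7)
The plan is to unwind the definition of forking (respectively, thorn-forking) and to observe that any witness to dependence in $L_W$ is finite --- a single formula with finitely many parameters --- and so already lives in some reduct $L_w$ whose signature contains every symbol in the witness.

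For the simple case, suppose $A \nind^W_C B$. Since $T_W$ is simple, forking coincides with dividing, so there are a finite tuple $\bar{a} \in A$ and a formula $\phi(\bar{x}; \bar{b}) \in \operatorname{tp}_W(\bar{a} / BC)$ (with $\bar{b}$ a finite tuple from $B \cup C$) that $L_W$-divides over $C$. Fix an $L_W$-$C$-indiscernible sequence $(\bar{b}_i)_{i<\omega}$ with $\bar{b}_0 = \bar{b}$ and an integer $k$ such that $\{ \phi(\bar{x}; \bar{b}_i) \sthat i<\omega \}$ is $k$-inconsistent. The formula $\phi$ uses only finitely many symbols of $L_W$, and its free variables range over finitely many imaginary sorts each defined by a finite $L_W$-formula; by sufficiency there is $w \in W$ such that $L_w$ contains all of these symbols. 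Then $\phi$ is an $L_w$-formula, the tuples $\bar{a}, \bar{b}, \bar{b}_i$ all lie in $M_w^{eq}$, the sequence $(\bar{b}_i)$ remains indiscernible over $C \cap M_w^{eq}$ in $L_w$ (a weaker condition than the given $L_W$-indiscernibility over $C$), and $k$-inconsistency of finitely many $L_w$-formulas is inherited by the reduct $M_w$. Hence $\phi(\bar{x}; \bar{b})$ $L_w$-divides over $C$, giving $A \nind^w_C B$.

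The rosy case follows the same outline, with thorn-dividing in place of dividing: $A \nind^W_C B$ is witnessed by a formula $\phi(\bar{x}; \bar{b}) \in \operatorname{tp}_W(\bar{a} / BC)$ that $L_W$-strongly divides over some set $CD$ with $D \subset M_W^{eq}$. That is, $\bar{b} \notin \acl_W(CD)$ and $\{ \phi(\bar{x}; \bar{b}') \sthat \bar{b}' \equiv^W_{CD} \bar{b} \}$ is $k$-inconsistent. One chooses $w$ with $L_w$ containing all relevant symbols including the sorts of $D$; non-algebraicity descends automatically since $\acl_w \subseteq \acl_W$.

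The main obstacle is $k$-inconsistency in the reduct: the set of $L_w$-$CD$-conjugates of $\bar{b}$ is in general strictly larger than the set of $L_W$-conjugates (because $L_w$-types are coarser), so $k$-inconsistency does not descend directly. My plan is to enlarge the base to $E := \acl_W(CD)$ (which preserves $L_W$-strong dividing by base monotonicity) and then, given any alleged counterexample $L_w$-$E$-indiscernible sequence of $L_w$-conjugates of $\bar{b}$ admitting a common realizer of $\phi$, to extract via Ramsey an $L_W$-$E$-indiscernible subsequence whose elements automatically lie in a single $L_W$-type over $E$; comparing this $L_W$-type to $\operatorname{tp}_W(\bar{b}/E)$ and applying the given $L_W$-$k$-inconsistency forces the required $L_w$-inconsistency, contradicting the existence of the counterexample sequence. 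This yields $L_w$-strong dividing over $E$, hence $L_w$-thorn-dividing over $C$, and thus $A \nind^w_C B$.
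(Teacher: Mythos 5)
Your simple case is essentially correct and matches the paper in spirit: a single dividing formula with finitely many parameters lives in some $L_w$, and $k$-inconsistency (whether witnessed by an indiscernible sequence, as you use, or merely by an infinite family of realizations of the same type, as the paper uses) is a finitary condition that is insensitive to passing between $L_W$ and $L_w$. Both formulations of dividing are standard and equivalent.

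The rosy case is where you run into trouble. You correctly identify the obstacle: $L_w$-conjugates of $\bar{b}$ over $CD$ form a strictly larger set than the $L_W$-conjugates, so the hypothesis that the $L_W$-conjugates are $k$-inconsistent does not by itself control the $L_w$-conjugates. But your proposed repair via Ramsey does not close the gap. After you extract an $L_W$-$E$-indiscernible subsequence from an $L_w$-$E$-indiscernible sequence of $L_w$-conjugates of $\bar{b}$, you do get a sequence lying in a single $L_W$-type $q$ over $E$ --- but $q$ need not equal $\operatorname{tp}_W(\bar{b}/E)$. It is a completion of $\operatorname{tp}_w(\bar{b}/E)$, and there may be many of these. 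The given $L_W$-strong-dividing only tells you that realizations of $\operatorname{tp}_W(\bar{b}/E)$ are $k$-inconsistent; it says nothing about realizations of a different completion $q$, so ``comparing this $L_W$-type to $\operatorname{tp}_W(\bar{b}/E)$ and applying the given $L_W$-$k$-inconsistency'' does not force anything.

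The paper avoids this entirely by first compactifying $L_W$-strong-dividing into a single formula. Since the $\operatorname{tp}_W(d/CD)$-conjugates of $d$ are $k$-inconsistent, compactness gives one $L_W$-formula $\theta(y,z)$ and a parameter tuple $f \in CD$ with $\models \theta(d, f)$ such that the conjunction $\bigwedge_{j=1}^{k+1}\bigl(\psi(x, y^{j}) \wedge \theta(y^{j}, f)\bigr)$ is inconsistent. This is the step you are missing. Once you have it, the conclusion in $L_w$ is immediate: choose $L_w$ containing the symbols of $\psi$, $\theta$ and the sort of $f$; then $\theta(y, f)$ belongs to $\operatorname{tp}_w(d/CD)$ as well, every $L_w$-conjugate of $d$ over $CD$ satisfies $\theta(\cdot, f)$, and the inconsistency of the $(k+1)$-fold conjunction is an $L_w$-statement that holds in the reduct. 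Since $\acl_w \subseteq \acl_W$, non-algebraicity of $d$ over $CD$ also descends. This gives $L_w$-strong-dividing (hence $L_w$-thorn-dividing over $C$) directly, with no change of base, no Ramsey, and no comparison of types. The lesson is that strong dividing, like ordinary dividing, should be witnessed by a single formula before you try to pass to a reduct; once it is in that form, the argument becomes as short as the one for the simple case.
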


\begin{proof}
 Since $ A \nind^W_C B$, there are some $a \in A$, $b \in B$, and $\phi(x,y)$ such that $\models \phi(a,b)$ and $\phi(x,b)$ (thorn-)forks over $C$. That is, $\phi(x,b)$ implies the (finite) disjunction of some $\psi_i(x, d_i)$, each of which (thron-)divides over $C$.

  In the case of dividing, this just means that for each $i$, there is an integer $k_i$ and an infinite set $\{ d_i^j \sthat j \in \omega \}$ of realizations of the $L_W$-type of $d_i$ over $C$ such that $\{ \psi_i(x, d_i^j) \sthat j\in \omega \}$ is $k_i$-inconsistent. This clearly remains true in any $L_w$ which contains (all the symbols in) all $\psi_i$ and $\phi$.

In the case of thorn-dividing, this means that each $\psi_i(x, d_i)$ strong-divides over $C e_i$ for some $e_i$. That is, for each $i$, $d_i$ is not $L_W$-algebraic over $C e_i$ and there are a formula $\theta_i(y, z_i)$, an integer $k_i$, and some $f_i \in C e_i$ such that and $\wedge_{j=1}^{k_i+1} ( \psi_i(x, y^j) \wedge \theta_i(y^j, f_i))$ is inconsistent and $\models \theta_i(d_i, f_i)$. This clearly remains true in any $L_w$ which contains (all the symbols in) all $\psi_i$, $\theta_i$, and $\phi$.

\end{proof}

 We include the details of this easy proof because we refer back to its details in the proof of Proposition \ref{geostabprop} below, and in order to point out the difficulties in proving the other direction of the conjecture. In the case of dividing, we need "an infinite set of realizations of the $L_W$-type of $d_i$ over $C$". In the case of thorn-dividing, we need $e_i$ with enough knowledge about $d_i$ to witness strong-dividing, but not so much as to make $d_i$ algebraic. Both of these are difficult to preserve in passing from a reduct to an expansion.

 The following fact proves the other direction of the conjecture in the case where all $T_w$ eliminate hyperimaginaries. This includes the case where all $T_w$ are stable and the case where all $T_w$ are supersimple, and is conjectured to include the case where all $T_w$ are simple. This fact also applies to many cases where the $T_w$ are only rosy.

\begin{fact} (Exercise 3.5 in \cite{adlerth}) \label{exer35} Suppose that $M$ is a big model of $T$, and $T'$ is a reduct of $T$. Suppose that $T$ and $T'$ are simple (resp., rosy), and forking (resp. thorn-forking) independence in $T'$ is a canonical independence relation in the sense of Definition 3.1 in \cite{adlerth}. Let $C \subset A, B \subset M^{eq}$ be algebraically closed in the sense of $T^{eq}$. If $ A \inde_C B$ in the sense of $T$, then $ A \inde_C B$ in the sense of $T'$.\end{fact}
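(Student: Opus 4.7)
The plan is to apply Adler's characterization of forking (resp.\ thorn-forking) in a simple (resp.\ rosy) theory as the unique (resp.\ weakest) canonical independence relation. I would define a candidate ternary relation on the $T'^{eq}$-monster (embedded in an ambient $T^{eq}$-monster) by transferring $T$-independence along $T$-algebraic closure,
\[ A \inde^*_C B \iff \acl^T(AC) \inde^T_{\acl^T(C)} \acl^T(BC), \]
verify that $\inde^*$ satisfies Adler's axioms for a canonical (strict) independence relation on $T'$, and then invoke uniqueness/minimality to conclude $\inde^* \subseteq \inde^{T'}$. Unpacking this inclusion on $T^{eq}$-algebraically closed inputs gives precisely $A \inde^T_C B \Rightarrow A \inde^{T'}_C B$.

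The bulk of the axioms transfer routinely: symmetry, monotonicity, base monotonicity, transitivity, normality, and finite character descend from the corresponding properties of $\inde^T$ combined with elementary manipulations of $\acl^T$; the extension axiom transfers because a $T$-elementary conjugate is in particular $T'$-elementary; local character passes via the cardinality bound $|\acl^T(A)| \leq |A| + |T|$. The main obstacle I expect to fight is $T'$-invariance --- $\inde^*$ is defined via the $T$-sensitive operation $\acl^T$, while we need it preserved by $T'$-automorphisms of the $T'$-monster. I would handle this by working inside a monster that is saturated and strongly homogeneous as a $T$-structure, so that enough $T'$-partial-elementary maps between small $T^{eq}$-closed tuples lift to $T$-automorphisms, and then appealing to $T$-invariance of $\inde^T$; alternatively, one may reformulate $\inde^*$ by quantifying over $T'$-conjugates to make invariance automatic. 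Anti-reflexivity is the other subtle point, since $a \inde^*_C a$ only forces $a \in \acl^T(C)$, which need not lie in $\acl^{T'}(C)$; but the hypothesis that $C$ is $T^{eq}$-closed yields $\acl^T(C) = C \subseteq \acl^{T'}(C)$, and the axiom is verified on the nose in the restricted setting we actually care about.

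Once $\inde^*$ is established as a canonical relation on $T'$ (at least on $T^{eq}$-closed parameters), Adler's theorem identifies it with $\inde^{T'}$ in the simple case, and embeds it into $\inde^{T'}$ in the rosy case (where thorn-forking is the weakest canonical independence relation). In either case the desired implication $A \inde^T_C B \Rightarrow A \inde^{T'}_C B$ falls out. The hyperimaginary elimination hypothesis cited in the main text enters here as the standard sufficient condition guaranteeing that $\inde^{T'}$ is canonical in the strong sense Adler requires, so that uniqueness/minimality actually applies.
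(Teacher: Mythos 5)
The paper gives no proof of this statement --- it is labeled a Fact and attributed to Exercise~3.5 of Adler's survey --- so there is no in-paper argument to compare against. Taken on its own terms, your plan (build an auxiliary relation $\inde^*$ from $T$-forking, check Adler's axioms for it as a relation on the $T'$-monster, then invoke minimality of thorn-forking among strict independence relations) is the natural way to attack the exercise, and you correctly single out invariance and anti-reflexivity as the danger spots. But neither of the repairs you sketch actually closes the gap, so this is not yet a proof.

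The invariance fix claims that, in a monster saturated and strongly homogeneous \emph{for $T$}, enough $T'$-partial-elementary maps between $T^{eq}$-closed tuples lift to $T$-automorphisms. That cannot be right: a $T'$-elementary map is typically not $T$-elementary, and no amount of $T$-homogeneity will extend a non-$T$-elementary map to a $T$-automorphism. Concretely, take $T=\mathrm{ACF}_0$ and $T'$ the theory of pure equality; the map carrying the algebraically independent pair $(a,b)$ to $(a,a+1)$ is $T'$-elementary, extends to an automorphism of the reduct, and flips the verdict of $\inde^*$ (from independent to dependent), so $\inde^*$ is genuinely not $\operatorname{Aut}(M')$-invariant. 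Your alternative of quantifying over $T'$-conjugates trades this for other failures: the universal variant no longer contains $\inde^T$ on $T^{eq}$-closed inputs (some $T'$-conjugate of a $T$-independent configuration is $T$-dependent, as the same example shows), and the existential variant loses transitivity because the conjugates witnessing the two halves cannot in general be chosen coherently. The anti-reflexivity remark has the same shape: Adler's minimality theorem is about ternary relations defined on \emph{all} small subsets of the monster with \emph{all} axioms holding there, so verifying anti-reflexivity only ``in the restricted setting we actually care about'' leaves $\inde^*$ outside the scope of the theorem. To finish one would need either a version of Adler's minimality argument that works relative to an $\operatorname{acl}$-closed cofinal family of bases, or a direct calculation with local thorn-ranks; neither is supplied, so the final appeal to uniqueness/minimality is not yet licensed.
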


 We include for amusement a partial result for the conjecturally non-existent case of simple theories not subject to the last proposition. Its proof, suggested by Martin Hils, relies on the yoga of coheir sequences and Morley sequences, to be found, for example, in \cite{casasimp}.

\begin{prop} Suppose that $M$ is a big model of a simple $L$-theory $T$, and $T'$ is a reduct of $T$ to $L'$. Let $C \subset A, B \subset M^{eq}$ be algebraically closed in the sense of $T^{eq}$, and suppose further that $C$ is a model. If $ A \inde_C B$ in the sense of $T$, then $ A \inde_C B$ in the sense of $T'$.\end{prop}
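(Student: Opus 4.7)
The plan is to exploit Kim's lemma and the standard characterization of non-forking over models in simple theories via Morley sequences, combined with the monotonicity-in-signature of finite satisfiability. Simplicity of $T'$ passes from that of $T$ by Lemma \ref{combistabformula} (the tree property is a formula-level property, so if $T'$ had it, so would $T$), and the $L'$-reduct of the $T$-model $C$ is a $T'$-model; in particular, over $C$ non-forking and non-dividing coincide in both $T$ and $T'$, and Kim's lemma applies to Morley sequences over $C$ in each. The algebraic closure hypotheses on $A$ and $B$ play no role in this direction; they would only matter in the converse implication.

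Construct a coheir sequence $(B_i)_{i < \omega}$ of $\operatorname{tp}_T(B/C)$ with $B_0 = B$, by inductively choosing each $\operatorname{tp}_T(B_i / CB_{<i})$ to be finitely satisfiable in $C$; this is possible because $C$ is a model of $T$. In a simple theory, every coheir sequence over a model is a Morley sequence, so $(B_i)$ is Morley in $T$. The key observation is that $(B_i)$ is \emph{also} a coheir sequence in $T'$: finite satisfiability in $C$ of every $L$-formula over $CB_{<i}$ immediately implies the same property for every $L'$-formula, since $L' \subseteq L$. Hence $(B_i)$ is a Morley sequence over $C$ in $T'$ as well.

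Now apply Kim's lemma twice. Since $A \inde_C B$ in the sense of $T$ and $(B_i)$ is Morley over $C$ in $T$, the partial type $\bigcup_i \operatorname{tp}_T(A / CB_i)$ is consistent; its $L'$-restriction $\bigcup_i \operatorname{tp}_{T'}(A / CB_i)$ is then a fortiori consistent. By Kim's lemma applied to $T'$ with the Morley sequence $(B_i)$, $\operatorname{tp}_{T'}(A/CB)$ does not divide over $C$; and since $T'$ is simple and $C$ is a model of $T'$, this is the same as non-forking. Thus $A \inde_C B$ in the sense of $T'$, as required. There is no serious obstacle: the whole argument rests on the fact that finite satisfiability is monotone in the signature, which lets a single coheir sequence in $T$ double as the witness needed in both applications of Kim's lemma.
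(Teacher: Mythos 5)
Your proposal is correct and rests on exactly the same ingredients as the paper's proof: since $C$ is a model, one can build a coheir sequence; finite satisfiability in $C$ passes down to the reduct, so the same sequence is a Morley sequence in both $T$ and $T'$; and Kim's lemma then transfers non-dividing. The only difference is presentational: the paper argues contrapositively at the formula level, taking the coheir sequence on the parameter of a putative $L'$-dividing formula $\psi(x,d)$ and showing $\psi(x,d)$ must then $L$-divide, whereas you argue directly at the type level with a coheir sequence of $\operatorname{tp}_T(B/C)$; this is the same argument run forwards instead of backwards.
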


\begin{proof}
 If not, there are $a \in A$, $b \in B$, and $L'$-formula $\phi(x,y)$ such that $\models \phi(a,b)$ and $\phi(x,b)$ $L'$-forks over $C$. That is, $\phi(x,b)$ implies the (finite) disjunction of $L'$-formulae $\psi_i(x, d_i)$, each of which $L'$-divides over $C$. It suffices to show that these $\psi_i(x, d_i)$ also $L$-divide over $C$; to lighten notation, we work with one of them and drop the subscripts.

 So: $\psi(x, d)$ is an $L'$-formula which divides over $C$ in the sense of $L'$. Let $\{ d^j \sthat j \in \omega \}$ be a non-constant $L$-coheir sequence in the $L$-type of $d$ over $C$. That is, each $d^j \equiv_{L,CD_j} d$ and the type of $d^j$ over $CD_j$ is finitely satisfiable in $C$, where $D_j := \{ d^{j'} \sthat j' < j \}$ . Then $\{ d^j\}$ is still an $L'$-coheir sequence in the $L'$-type of $d$ over $C$, so it is also $L'$-Morley sequence in the $L'$-type of $d$ over $C$. Since $\psi(x,d)$ divides over $C$ in the sense of $L'$, and all Morley sequences witness dividing, $\{ \psi(x, d^j) \sthat j \in \omega \}$ is $k$-inconsistent for some $k$. But all $d^j$ realize the $L$-type of $d$ over $C$, so this witnesses that $\psi(x,d)$ $L$-divides over $C$. \end{proof}

\subsection{Geometric Stability} \label{geostabsec}

Lascar rank, originally defined to be a property of complete types $p$ in stable theories, namely the foundation rank in the tree of forking extensions, is denoted by $U(p)$. It is natural to generalize it to other contexts (simple theories, rosy theories) where forking (or thorn-forking) works well. For a partial type $\pi$, such as a formula, $\sup \{ U(p) \sthat \pi \subset p \}$ provides a less robust but still useful notion of rank. Abusing notation, we call all of these generalizations ``Lascar rank'' and denote them by $U(\pi)$.

We similarly say that two partial types are nonorthogonal whenever some completions of them are nonorthogonal is the usual precise sense.

We state most results in this Section \ref{geostabsec} for directed collections of subsignatures; this is a purely cosmetic choice. Compare, for instance, the statement of the next proposition to the last sentence of its proof.

To lighten notation, we partially order $W$ by inclusion of $L_w$'s and write $u \leq v$ for $L_u \subset L_v$. It is easy to see that the limsup of Lascar ranks in $L_w$, if finite, is an upper bound on the Lascar rank in $L_W$.

\begin{prop}
 Suppose all $T_w$ and, therefore, $T_W$ are simple (resp. rosy), and let $n \in \omega$.
If the $L_W$-Lascar rank of an $L_W$-type $p$ is at least $n$, then there is some $w_0 \in W$ so that for all $w > w_0$ the $L_w$-Lascar rank of $p_w$ (the reduct of $p$ to $L_w$) is at least $n$. \end{prop}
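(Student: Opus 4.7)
The proof will be by induction on $n$, exploiting that both forking and thorn-forking are witnessed by finitely many formulas --- the same principle underlying Lemma \ref{forkingoesdown}. The base case $n=0$ is trivial. For the inductive step, suppose $U_W(p) \geq n+1$, so that $p$ admits a forking extension $q$ in $L_W$ with $U_W(q) \geq n$.

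The $L_W$-forking of $q$ over $\operatorname{dom}(p)$ is witnessed by a single formula $\phi(x, b) \in q$ that implies a finite disjunction $\bigvee_j \psi_j(x, d_j)$ in which each $\psi_j(x, d_j)$ $L_W$-divides over $\operatorname{dom}(p)$ --- say via an $L_W$-indiscernible sequence $(d_j^\ell)_{\ell \in \omega}$ of $L_W$-conjugates of $d_j$ over $\operatorname{dom}(p)$ with $\{\psi_j(x, d_j^\ell) \sthat \ell \in \omega\}$ being $k_j$-inconsistent. All these formulas, together with the equivalence relations defining the sorts in which the imaginary parameters $b$ and $d_j$ live, form a finite subsignature of $L_W$ and hence sit in some $L_{w_1}$ by sufficiency. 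The inductive hypothesis applied to $q$ yields $w_0'$ with $U_w(q_w) \geq n$ for all $w \geq w_0'$; by directedness, pick $w_0$ with $L_{w_0} \supseteq L_{w_1} \cup L_{w_0'}$.

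For any $w \geq w_0$ the data transfers verbatim: the implication $\phi \to \bigvee_j \psi_j$ holds in $M_w$ because it holds in $M_W$ and involves only $L_w$-formulas; $L_W$-indiscernibility of $(d_j^\ell)$ descends to $L_w$-indiscernibility; each $d_j^\ell$ is still an $L_w$-conjugate of $d_j$ over $\operatorname{dom}(p_w)$; and $k_j$-inconsistency is absolute. Hence $\phi(x, b)$ $L_w$-forks over $\operatorname{dom}(p_w)$, so $q_w$ is an $L_w$-forking extension of $p_w$ with $U_w(q_w) \geq n$, giving $U_w(p_w) \geq n+1$. The thorn-forking variant is structurally identical once one also absorbs the strong-dividing witnesses $\theta_j(y, z_j)$ and auxiliary parameters $f_j$ into $L_{w_1}$. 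The only real subtlety is notational rather than mathematical: one must ensure that every sort in which a parameter lives is $L_{w_0}$-definable so that the reduced data really lives in $M_w^{\operatorname{eq}}$, but since only finitely many sort-defining equivalence relations appear at each inductive stage, they too can be absorbed into $L_{w_1}$, and sufficiency plus directedness of $\{L_w\}$ do the rest.
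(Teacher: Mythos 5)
Your proof is correct and takes essentially the same approach as the paper: both extract a forking chain, observe that each forking step is witnessed by finitely many formulas exactly as in Lemma \ref{forkingoesdown}, and take any $L_w$ containing all the relevant symbols. The only cosmetic difference is that you organize this as an induction on $n$, combining via directedness at each step, whereas the paper unrolls the entire chain of $n$ forking steps at once and collects all the (still finitely many) formulas in a single application of sufficiency.
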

\begin{proof}
 Since the $L_W$-Lascar rank of an $L_W$-type $p$ is at least $n$, there are $A_n \supset A_{n-1} \supset \ldots \supset A_0 = \operatorname{dom}(p)$ and $L_W$-types $p_i$ over $A_i$ such that $A_i = \acl_W(A_i)$ and $p_n \supset \ldots \supset p_0 = p$ and $p_{i+1}$ $L_W$-(thorn-)forks over $A_i$ for each $i$. As in the proof of Lemma \ref{forkingoesdown}, there is a finite list of formulae such that any $L_w$ that contain all the symbols in these formulae does the job.
\end{proof}

The following immediate corollary gives a practical way to look for $L_W$-types of low rank. An analogous but more cumbersome result holds when the collection of subsignatures is not directed.

\begin{sheep} \label{rankcomputesheep} Suppose that $\{ L_w \sthat w \in W \}$ is directed, $n \in \omega$, and $\pi$ is a partial $L_w$-type (such as an $L_w$-formula) such that for all $w' >w$ and for all $L_{w'}$-types $p_{w'} \supset \pi$, the $L_{w'}$-Lascar rank of $p_{w'}$ is at most $n$. Then the $L_W$-rank of every $L_W$-type that contains $\phi$ is at most $n$.
\end{sheep}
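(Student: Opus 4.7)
The plan is to deduce this corollary directly from the preceding proposition by contraposition, using directedness of $\{L_w \sthat w \in W\}$ to reconcile the two indices involved.

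First I would suppose, for contradiction, that some $L_W$-type $p \supset \pi$ has $L_W$-Lascar rank strictly greater than $n$. Applying the preceding proposition (with $n+1$ in place of $n$), I obtain some $w_0 \in W$ such that the $L_{w'}$-Lascar rank of the reduct $p_{w'}$ is at least $n+1$ for every $w' > w_0$.

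Next I would invoke directedness: since $\pi$ is an $L_w$-type and both $w$ and $w_0$ lie in $W$, there is some $w' \in W$ with $L_{w'} \supset L_w \cup L_{w_0}$, so $w' > w$ and $w' > w_0$ simultaneously. Since $L_w \subset L_{w'}$, the reduct $p_{w'}$ is still an $L_{w'}$-type containing $\pi$, so the hypothesis of the corollary applies and gives that the $L_{w'}$-Lascar rank of $p_{w'}$ is at most $n$, contradicting the lower bound from the previous step.

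I expect no real obstacle here — the only thing to be careful about is that the hypothesis of the corollary is stated for $L_{w'}$-types containing $\pi$, and this is exactly what the reduct $p_{w'}$ is, because $\pi$ lives in a sublanguage $L_w \subset L_{w'}$. The "analogous but more cumbersome" statement mentioned for non-directed collections would presumably replace the single witness $w'$ by a cofinal family, but in the directed case one $w'$ dominating both $w$ and $w_0$ is enough.
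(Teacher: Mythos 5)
Your argument is correct and is precisely the "immediate" deduction the paper has in mind: contrapose the preceding proposition (with $n+1$ in place of $n$) and use directedness to find a single $w'$ dominating both $w$ and the $w_0$ produced by the proposition, at which point the corollary's hypothesis applied to the reduct $p_{w'}$ gives the contradiction. The paper offers no written proof beyond calling the corollary immediate, and your write-up fills in exactly the intended steps.
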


Some things can also be said about properties around the Zilber Trichotomy. 

\begin{deff} \label{deftrivmod}
 For a sufficiently saturated $L$-structure $M$ and a partial $L$-type $\pi$ over $A \subset M$,\begin{itemize}
\item  Algebraic closure is \emph{trivial} on the set of realizations of $\pi$, or $\pi$ is \emph{trivial}, when for any $k \in \nn$, if $a_k \in \acl(A, a_1, \ldots a_{k-1})$ and all $a_i$ are realizations of $\pi$, then $a_k \in \acl(A, a_i)$ for some $i < k$.

\item $\pi$ is \emph{one-based} if any two sets $B$ and $C$ of realizations of $\pi$ are independent over $\acl^{eq}(A \cup B) \cap \acl^{eq}(A \cup C)$.

\item $\pi$ is \emph{a modular group} when the set $G$ of realizations of $\pi$ admits a definable group structure, and any (relatively) definable subset of $G$ is a boolean combination of cosets of (relatively) definable subgroups of $G$.
\end{itemize} \end{deff}

Our definition of ``one-based'' is exactly that of ``modular'' in \cite{udizoe}. 
If $\pi'$ is a reduct of $\pi$, neither of $L$-triviality of $\pi$ and $L'$-triviality of $\pi'$ imply the other, and the same for one-basedness; but these properties do pass up to the limit in our sense.

\begin{prop} \label{geostabprop}
Suppose that $W$ is directed.
Let $\pi$ be a partial $L_W$-type, and suppose that there exists $w_0 \in W$ such that for all $w \geq w_0$, the reduct $\pi_w$ is $L_w$-trivial (resp., $L_w$-one-based, an $L_w$-modular group). Then $\pi$ is $L_W$-trivial (resp., $L_W$-one-based, $L_W$-modular group). \end{prop}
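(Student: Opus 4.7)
The plan is to handle all three properties by the same pattern: extract from any supposed failure of the $L_W$-version a finite piece of data (a formula, plus finitely many parameters or algebraicity witnesses), use directedness to pick some $w \geq w_0$ so that $L_w$ contains every symbol appearing in this data, and then read off a failure of the $L_w$-version, contradicting the hypothesis. Throughout I would use that realizations of $\pi$ are also realizations of $\pi_w$ and that $\acl_w \subset \acl_W$.

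Triviality and the modular group clause are essentially direct. For triviality, an algebraicity $a_k \in \acl_W(A, a_1, \ldots, a_{k-1})$ is witnessed by a single $L_W$-formula; pick $w \geq w_0$ containing it, deduce $a_k \in \acl_w(A, a_1, \ldots, a_{k-1})$, invoke $L_w$-triviality of $\pi_w$ to obtain $a_k \in \acl_w(A, a_i)$ for some $i < k$, and push this back along $\acl_w \subset \acl_W$. For the modular group clause, any $L_W$-relatively definable subset of $G$ is cut out by one $L_W$-formula; absorbing it and the symbols defining the group law into an $L_w$ with $w \geq w_0$, $L_w$-modularity presents the set as a boolean combination of cosets of $L_w$-relatively definable subgroups, which are a fortiori $L_W$-relatively definable.

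The case of one-basedness is the main obstacle, because the base $E := \acl_W^{eq}(AB) \cap \acl_W^{eq}(AC)$ of the desired independence is defined with $\acl_W$, whereas the $L_w$-hypothesis only yields independence over the possibly smaller $E_w := \acl_w^{eq}(AB) \cap \acl_w^{eq}(AC)$. I argue by contrapositive. Assume $B \nind^W_E C$; fix finite tuples $b \in B$, $c \in C$ and an $L_W$-formula $\phi(x,c) \in \operatorname{tp}_W(b/Ec)$ that $L_W$-forks over $E$, so $\phi(x,c)$ implies $\bigvee_i \psi_i(x, d_i)$ with each $\psi_i(x, d_i)$ $L_W$-dividing over some finite $E_{0,i} \subset E$; set $E_0 := \bigcup_i E_{0,i}$. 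Each $e \in E_0$ is $L_W^{eq}$-algebraic over both $AB$ and $AC$, each fact witnessed by a single $L_W$-formula. By directedness, choose $w \geq w_0$ whose $L_w$ contains $\phi$, all $\psi_i$, and all of these algebraicity witnesses; then $E_0 \subset \acl_w^{eq}(AB) \cap \acl_w^{eq}(AC) = E_w$. Each $\psi_i(x,d_i)$ still $L_w$-divides over $E_0$, since any sequence of $L_W$-conjugates of $d_i$ over $E_0$ is also a sequence of $L_w$-conjugates and $k$-inconsistency is absolute. Hence $\phi(x,c)$ $L_w$-forks over $E_0 \subset E_w$, giving $b \nind^w_{E_w} c$ and contradicting $L_w$-one-basedness of $\pi_w$ applied to the same $B$ and $C$.

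The delicate step, and the reason directedness is essential, is moving from the finite witness $E_0 \subset E$ to the containment $E_0 \subset E_w$: only after absorbing finitely many algebraicity-witnessing formulas into a common $L_w$ can we detect that the relevant elements of $E$ are already $L_w$-algebraic over both $AB$ and $AC$, converting the $L_W$-failure into an $L_w$-failure that the hypothesis forbids.
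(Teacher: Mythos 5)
Your arguments for triviality and for the modular-group clause are correct and track the paper's own (the paper just asserts these are ``$\forall\exists$ in formulae''; you spell out the same idea).

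The one-basedness argument, however, has a genuine gap in its final inference. You establish that $\phi(x,c)$ $L_w$-forks over the finite set $E_0$, and $E_0 \subset E_w$, and then conclude $b \nind^w_{E_w} c$. But forking over a base does not pass upward to larger bases: if $\psi(x,d)$ divides over $E_0$, the witnessing $E_0$-conjugates of $d$ need not be $E_w$-conjugates when $E_w \supsetneq E_0$, so $\psi(x,d)$ may well fail to divide over $E_w$. (Concretely, forking over $E_0$ is consistent with independence over $E_w$; the extra parameters in $E_w \setminus E_0$ can absorb the dependence.) So the contradiction with $L_w$-one-basedness over $E_w$ is not obtained.

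The fix is to abandon the detour through $E_0$ entirely: it is both insufficient (as above) and unnecessary. Since $\acl_w \subseteq \acl_W$, we have $E_w \subseteq E$ automatically, with no algebraicity witnesses to absorb. The $L_W$-dividing of $\psi_i(x,d_i)$ over $E$ is witnessed by a sequence $(d_i^j)$ realizing $\operatorname{tp}_W(d_i/E)$; these a fortiori realize $\operatorname{tp}_w(d_i/E)$ and hence $\operatorname{tp}_w(d_i/E_w)$, and $k$-inconsistency is unaffected by the reduct. So $\psi_i(x,d_i)$ $L_w$-divides over $E_w$, $\phi(x,c)$ $L_w$-forks over $E_w$, and $b\nind^w_{E_w}c$ — which is the contradiction you want, since $b\in B\subseteq \acl^{eq}_w(AB)$ and $c\in C\subseteq\acl^{eq}_w(AC)$ automatically. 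This is in essence what the paper does by appealing to (the internals of) Lemma~\ref{forkingoesdown}: the base one passes to in $L_w$ is the full $D^w = E_w$, and the direction of base-monotonicity used is ``dividing over a set passes to dividing over a subset,'' never the reverse. You should also note that the paper's argument additionally covers thorn-forking (for the rosy case), which requires keeping track of $\theta_i$, $e_i$, $f_i$; your writeup treats only the simple/dividing case.
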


\begin{proof}
 With some compactness, triviality becomes $\forall\exists$ in formulae.

 For the second, suppose $B$ and $C$ witness that $\pi$ is not one-based. For each $w$, let $B^w := \acl^{eq}_w(AB)$, and similarly for $C^w$, and let $D^w := B^w \cap C^w$; and similarly for $B^W$, $C^W$, and $D^W$.
 Failure of one-basedness of $\pi$ in $L_W$ means that $B^W \nind^W_{D^W} C^W$.

 From the proof of Lemma \ref{forkingoesdown}, it follows that $B^w \nind^w_{D^w} C^w$ for some $w$, violating $L_w$-one-basedness, since realizations of $\pi$ are a fortiori realizations of $\pi_w$.

 Indeed, it suffices to take $L_w$ large enough to include all the symbols in the formulas $\phi$, $\psi_i$, and $\theta_i$ that appear in that proof, and large enough to ensure that the elements $b \in B^W$ and $c \in C^W$ and $f_i \in D^W e_i$ from that proof are still where they need to be: $b \in B^w$, $c \in C^w$, and $f_i \in D^w e_i$. Since $i$ ranges over a finite set in that proof, all this only requires a finite reduct of $L_W$, which is therefore contained in some $L_w$.




 The last is again $\forall\exists$ in formulae.
\end{proof}

\section{$\mathbb{Q}$ACFA} \label{sec3}

\subsection{Preliminaries and ACFA}

The natural signature for a field with one automorphism is $\{ +, \cdot, 0, 1, \sigma \}$, the signature of rings expanded by a unary function symbol $\sigma$ for the automorphism. The language of naive Weil-style algebraic geometry, as in the first chapter of \cite{hshor}, is convenient for describing definable sets in this setting. Thus, for us, an affine (resp., projective) \emph{variety} is a solution set of a finite set of (resp. homogeneous) polynomial equations in affine (resp. projective) space, and we always work over fields. Note that we do not require varieties to be irreducible. The \emph{algebraic locus of $a \in K \supset E$ over $E$} is the smallest variety defined over $E$ that contains $a$.

A rational function between varieties is \emph{dominant} (almost surjective) if its image is Zariski-dense in its target. A rational function is \emph{finite} if almost all fibers (more precisely, fibers above a Zariski-dense subset of its image) are finite. If $A$ is a variety, $A^\sigma = \{ \sigma(a) \sthat a \in A \}$ is defined by the same equations as $A$ but with coefficients twisted by $\sigma$. \textbf{When $A$ and $B$ are irreducible, $B \subset A \times A^\sigma$, and the two projections from $B$ to $A$ and to $A^\sigma$ are dominant and finite,} we write
$$(A,B)^\sharp := \{ a \in A \sthat (a, \sigma(a)) \in B \}.$$
Otherwise, we write $(A,B)^{sh} := \{ a \in A \sthat (a, \sigma(a)) \in B \}$ for the same set. The irreducibility and dominance hypotheses are harmless in that if they fail, $(A,B)^{sh}$ is actually trapped inside, and thus better understood in terms of, other smaller varieties. Without the finiteness hypothesis, the Lascar rank of $(A,B)^{sh}$ is infinite; we exclude these because we have nothing to say about them. When we work with many automorphisms, we write $(A,B)^{\sigma \sharp}$ to indicate the automorphism.

\begin{deff} Let $K$ be a field with an automorphism $\sigma$, and fix $F \subset K$ and $c \in K$.
We write $\langle F \rangle_{\sigma, \sigma^{-1}}$ for the field generated by $\cup_{n \in \mathbb{Z}} \sigma^n(F)$. The \emph{$\sigma$-degree of $c$ over $F$ in $K$} is 
$$ \deg_\sigma(c/F) := \operatorname{tr.deg.} ( \langle Fc \rangle_{\sigma, \sigma^{-1}} / \langle F \rangle_{\sigma, \sigma^{-1}})$$
If $A$ and $B$ are defined over $F$, a tuple $c \in (A, B)^\sharp$ is \emph{$\deg_\sigma$-generic in $(A, B)^\sharp$ over $F$} if $\deg_\sigma(c/F)$ is the dimension of $A$. A type is \emph{$\deg_\sigma$-generic} if some of its realizations in a monster model are $\deg_\sigma$-generic.
\end{deff}

In any case, the dimension of $A$ is always an upper bound on the $\sigma$-degree of elements of $(A, B)^\sharp$. It is easy to see that $c \in (A, B)^\sharp$ is $\deg_\sigma$-generic in $(A, B)^\sharp$ over $F$ if and only if the algebraic locus of $c$ over $F$ is precisely $A$; that is, a type is $\deg_\sigma$-generic if and only if the set of its realizations (in a sufficiently saturated model) is Zariski-dense in $A$.

We briefly summarize some of the results of \cite{udizoe} and \cite{udikobizoe}. The theory of fields (or integral domains, or algebraically closed fields) with an automorphism (or an injective endomorphism) has a model companion ACFA, axiomatized by the axioms for algebraically closed fields, the statement that $\sigma$ is an automorphism, and axioms requiring $(A,B)^\sharp$ to be Zariski-dense in $A$ for all $A$ and $B$. The completions of ACFA are given by specifying the characteristic of the field and the action of $\sigma$ on the algebraic closure of the prime field. If $E$ is a subset of a model of ACFA, the model-theoretic algebraic closure of $E$, denoted by $\acl_\sigma(E)$, is the field-theoretic algebraic closure of $\langle E \rangle_{\sigma, \sigma^{-1}}$. We write $E^{alg}$ for the field-theoretic algebraic closure of $E$. ACFA is supersimple, and forking-independence is given by
  $$E_1 \inde^{ACFA}_F E_2  \mbox{ if and only if }  \acl_\sigma(FE_1) \inde^{ACF}_{\acl_\sigma(F)} \acl_\sigma(FE_2) $$
  Thus, forking is always witnessed by quantifier-free formulae, which are stable. Indeed, forking formulas $\phi(x)$ are (or at least imply) ``new'' algebraic equations on $\{ \sigma^i(x) \}_{i \in \nn}$. It follows easily that when the $\sigma$-degree of $c$ over $F$ is finite, it is an upper bound on the Lascar rank of the type of $c$ over $F$. In particular, the algebraic dimension of $A$ is an upper bound on the Lascar rank of types in $(A,B)^\sharp$: any forking extension of a type in $(A,B)^\sharp$ must contain (or at least imply) new algebraic relations on $x$. This bound is rarely tight.

  Types of Lascar rank $1$ satisfy the Zilber Trichotomy.
 \begin{fact} (Zilber Trichotomy)
 \cite{udikobizoe} \label{zitrifact}
 Every complete type of Lascar rank $1$ in ACFA is exactly one of the following:
 \begin{itemize}
 \item \emph{disintegrated}: exactly as in Definition \ref{deftrivmod};
 \item \emph{grouplike}: one-based (see Definition \ref{deftrivmod}), and non-orthogonal to a generic type of a definable group of Lascar rank $1$;
 \item \emph{fieldlike}: non-orthogonal to a generic type of a field defined by $\tau(x) = x$, where $\tau = \sigma$ or, in positive characteristic, a composition of relatively prime powers of $\sigma$ and the Frobenius automorphism.
 \end{itemize}
 \end{fact}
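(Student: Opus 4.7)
The plan is to exploit the supersimplicity of ACFA together with a group/field configuration argument. First I would dispose of the easy dichotomy: because ACFA is supersimple, any complete type $p$ of Lascar rank $1$ is either one-based or not, and if it is one-based and algebraic closure is nontrivial on its realizations, one obtains a type-definable infinite \emph{abelian} group on which $p$ concentrates by running the group configuration theorem in simple theories (Ben-Yaacov--Tomasic--Wagner, extending Hrushovski's original). The grouplike case is then handled by observing that any such definable group has rank $1$, and any relatively definable subset is a boolean combination of cosets of relatively definable subgroups by one-basedness; this is the content of the ``modular group'' clause of Definition \ref{deftrivmod}.

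The substantive step is the non--one-based case, where the aim is to produce a fieldlike type. Here I would appeal to the field configuration theorem adapted to simple theories: a non--one-based rank $1$ type, via an appropriate pseudoplane/quadrangle of $p$-generic points, encodes an infinite field $F$ definable in ACFA after naming parameters. The problem then reduces to classifying infinite fields interpretable in a model of ACFA. Using quantifier-free stability of ACFA (forking is witnessed by quantifier-free formulas, as recalled above) together with the Pillay--Scanlon type analysis of definable fields in difference-closed fields, every such $F$ is, up to definable isomorphism, the fixed field $\fix(\tau) = \{x : \tau(x) = x\}$ of some definable automorphism $\tau$ of the ambient algebraically closed field. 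The only definable automorphisms of an algebraically closed field of characteristic $0$ in the $\sigma$-reduct are powers of $\sigma$, so $\tau = \sigma^n$; in positive characteristic we additionally have the Frobenius $\phi$ and its powers, and a dimension count on the fixed subfield forces $\tau$ to be a composition $\sigma^a \phi^b$ with $\gcd(a,b) = 1$, for otherwise the fixed field would be infinite-dimensional over a proper subfield and $p$ would have rank greater than $1$.

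Finally I would show the three clauses are mutually exclusive: a disintegrated type cannot be nonorthogonal to a generic of an infinite definable group (one obtains a nontrivial binary algebraic closure relation between independent generics and a translate); and a type nonorthogonal to the generic of a $\fix(\tau)$ cannot be one-based, since the generic of the fixed field is not one-based (it supports a definable field structure with two independent operations, contradicting the Hrushovski--Pillay theorem that one-based fields are finite).

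The main obstacle is the middle paragraph: extracting the field from a non--one-based rank $1$ type and then identifying it as a fixed field. Extracting \emph{some} interpretable infinite field is the field configuration theorem, which by now is standard in simple theories; the hard, ACFA-specific input is the classification of infinite interpretable fields, which uses the special structure of definable sets in ACFA (quantifier-free stability, the Zariski-density axiom scheme $(A,B)^\sharp$) to rule out any fields other than fixed fields of definable automorphisms. Everything else in the argument is formal simple-theory yoga or straightforward case analysis.
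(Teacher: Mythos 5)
The statement you are proving is quoted in the paper as a \emph{Fact}, with a citation to Chatzidakis--Hrushovski(--Peterzil); the paper offers no proof of its own, so your proposal is being measured against the actual argument in the cited reference. Your easy steps (the one-based/non-one-based split, the group configuration theorem of Ben-Yaacov--Tomasic--Wagner producing an abelian group in the one-based nontrivial case, and the mutual-exclusivity checks at the end) are fine. The gap is the middle paragraph, and it is not a small one.

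There is no ``field configuration theorem'' in simple theories that extracts an infinite interpretable field from a non--one-based SU-rank $1$ type. The group configuration theorem produces a \emph{group} from a combinatorial group configuration; passing from non-local-modularity to a \emph{field} is precisely the content of the Hrushovski--Zilber Zariski Geometry Theorem in the strongly minimal setting, and that theorem needs the full topological/dimension-theoretic Zariski geometry axioms as input, not just forking calculus. In ACFA this is the crux of the whole trichotomy: in characteristic $0$ Chatzidakis--Hrushovski verify that minimal difference varieties carry a Zariski-geometry-like structure (this is hard, and uses the algebraic geometry of $\sigma$-varieties rather than any abstract simplicity argument), and in positive characteristic the Zariski geometry route fails outright and the cited paper replaces it with a quite different argument (limit structures, reduction to Frobenius specializations, jet/arc-space methods). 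Your sketch acknowledges that classifying interpretable fields is ``the hard, ACFA-specific input,'' but the step before that --- getting any infinite interpretable field out of non--one-basedness --- is equally ACFA-specific and equally hard, and it cannot be discharged as ``standard in simple theories.'' As written, the proposal assumes the substance of the theorem it is trying to prove; you would need to either carry out the Zariski-geometry verification for minimal difference equations or produce the positive-characteristic argument, both of which are the bulk of the cited work.
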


One-based groups of rank $1$ are modular (see Definition \ref{deftrivmod}) in characteristic zero, but not necessarily in positive characteristic.

\subsection{$\mathbb{Q}$ACFA basics}

A field with an action by $(\zz, +)$ is a definitional expansion of a field with one automorphism, and
the formalism of Section \ref{puresect} is good for approximating a field with a $(\qq, +)$-action by fields with $(\zz, +)$-actions.

\begin{deff}
Let $L_\qq := \{ +, \cdot, 0, 1 \} \cup \{ \sigma_q \sthat q \in \qq\}$
 and for each $q \in \qq$, let
$L_q := \{ +, \cdot, 0, 1 \} \cup \{ \sigma_{nq} \sthat n \in \mathbb{Z} \} \subset L_\qq$,
where $\sigma_q$ are unary function symbols.

Let $S_\qq$ be the theory of fields with automorphisms $\sigma_q$ satisfying $\sigma_{q+r} = \sigma_q \circ \sigma_r$ for all $q, r \in \qq$.

Let $T_\qq := S_\qq \cup \{ ACFA_q \sthat q \in \qq \}$  where $ACFA_q$ is the axiomatization of ACFA for the automorphism $\sigma_q$. This is $\qq$ACFA.

\end{deff}

Clearly, $\{ L_q \sthat q \in \qq \}$ is a sufficient, directed collection of subsignatures of $L_\qq$.

\begin{lem} The reduct $S_q$ is axiomatized by ``this is a field'' and ``$\sigma_q$ is a field-automorphism'' and ``$\sigma_{nq} = \sigma_q^{\circ n}$'' and
``$(\emptyset)^{alg} \subset \operatorname{fix}(\sigma_q)$''.
\end{lem}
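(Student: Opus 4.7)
The proof splits into two inclusions. For the easy direction, each of the four axioms is provable from $S_\qq$. The first three are immediate: the field axioms and ``$\sigma_q$ is a field-automorphism'' come directly from $S_\qq$, and $\sigma_{nq} = \sigma_q^{\circ n}$ follows by induction on $n$ from $\sigma_{a+b} = \sigma_a \circ \sigma_b$. For the fourth, fix $x \in (\emptyset)^{alg}$ in a model of $S_\qq$; its conjugates over the prime field form a finite Galois orbit $O$, and $q' \mapsto \sigma_{q'}|_O$ is a group homomorphism from $(\qq,+)$ to the finite group $\operatorname{Sym}(O)$. Since $\qq$ is divisible and every element of a finite group $G$ satisfies $g^{|G|} = e$, any such homomorphism is trivial, so $\sigma_q(x) = x$.

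For the reverse inclusion, I need to show that every $M \models T$ also models $S_q$; equivalently, that $\operatorname{Th}_{L_q}(M) \cup S_\qq$ is consistent. My plan is to pass to an $|M|^+$-saturated $L_q$-elementary extension $M^* \succeq_{L_q} M$, construct a $\qq$-action on $M^*$ extending the given $\sigma_q$, and then conclude $M^* \models S_q$, hence $M \models S_q$ by elementarity. Since $\qq = \bigcup_n \tfrac{1}{n!}\zz$, the task reduces to building a compatible tower $\tau_n := \sigma_{q/n!} \in \operatorname{Aut}(M^*)$ with $\tau_1 = \sigma_q$, $\tau_n^n = \tau_{n-1}$, and every $\tau_n$ fixing $(\emptyset)^{alg}$; then $\sigma_{mq/n!} := \tau_n^m$ supplies the required $\qq$-action.

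The step from $\tau_{n-1}$ to $\tau_n$ proceeds by back-and-forth inside $M^*$, fueled by the following algebraic amalgamation: for any field $K$ with $(\emptyset)^{alg} \subseteq K$ and $\sigma \in \operatorname{Aut}(K/(\emptyset)^{alg})$, there exists an extension $K'/K$ and $\tau \in \operatorname{Aut}(K'/(\emptyset)^{alg})$ with $\tau^n|_K = \sigma$; concretely, take $n$ algebraically independent copies of $K$ amalgamated over $(\emptyset)^{alg}$ and let $\tau$ cyclically permute them, with the last step twisted by $\sigma$. Saturation of $M^*$ realizes the type of such a generic twisted copy inside $M^*$: at each back-and-forth stage one picks a new element $a$ and uses saturation to find $b_1, \ldots, b_{n-1} \in M^*$ witnessing the amalgamation, then declares $\tau_n(a) = b_1$, $\tau_n(b_i) = b_{i+1}$, and $\tau_n(b_{n-1}) = \tau_{n-1}(a)$.

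The main obstacle is that each $\tau_n$ must extend the already-built $\tau_{n-1}$ rather than be an independent $n$-th root, so the amalgamation must be anchored on the previous structure rather than started from scratch. This is precisely where axiom 4 is indispensable: without it, the profinite argument of the first direction would force any would-be $n$-th root of $\sigma_q$ to disagree with the existing action on $(\emptyset)^{alg}$, obstructing the construction. With axiom 4 in hand, the prime algebraic closure is pointwise fixed at every level of the tower, so all the amalgamation takes place in the transcendental part, where algebraic independence leaves ample room.
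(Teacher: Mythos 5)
Your forward direction (the four axioms are $L_q$-consequences of $S_\qq$) is correct and essentially the paper's argument: both observe that $q'\mapsto\sigma_{q'}$ restricted to a finite Galois orbit is a homomorphism from the divisible group $(\qq,+)$ into a finite group and hence trivial. The paper phrases this as ``$\sigma_{q/m!}$ fixes the prime field and $m!$ annihilates the orbit,'' which is the same computation. For the reverse direction you also use the paper's key device, namely free amalgamation of copies of the field over $(\emptyset)^{alg}$ with a cyclic twist to manufacture compositional roots; but your attempt to anchor this inside a saturated $L_q$-elementary extension $M^\ast$ introduces a gap that the paper's terser ``take copies, amalgamate, permute'' does not. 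You ask for $M^\ast$ to be $|M|^+$-saturated and then want each $\tau_n$ to be a genuine element of $\operatorname{Aut}(M^\ast)$, i.e.\ defined on all of $M^\ast$. But any $|M|^+$-saturated model has cardinality at least $2^{|M|}$, which in general strictly exceeds $|M|^+$; the back-and-forth stalls once the accumulated domain reaches size $|M|^+$, long before it exhausts $M^\ast$. With $\tau_n$ only partially defined, $(M^\ast,(\tau_n)_n)$ is not a model of $S_\qq$ and the chain of deductions ``$M^\ast\models S_q$, hence $M\models S_q$ by elementarity'' never starts. This can be repaired --- either run an $\omega$-chain of elementary extensions with each $\tau_n$ eventually everywhere-defined, or stop the back-and-forth at a small $L_q$-elementary substructure $M\preceq M'\preceq M^\ast$ carrying the $\qq$-action via a Tarski--Vaught closure argument --- but as written the construction does not terminate with the object you need.

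There is also a quieter gap in the step ``saturation of $M^\ast$ realizes the type of such a generic twisted copy.'' Saturation realizes only types consistent with $\operatorname{Th}_{L_q}(M^\ast)$. The type you need is the quantifier-free $L_q$-type of the amalgam elements over the current domain: each $b_j$ must reproduce the full qf-$\sigma$-type of $a$ over $(\emptyset)^{alg}$ (which may include rigid constraints such as $\sigma(x)=f(x)$) while being algebraically generic over everything constructed so far. That this type is consistent with $\operatorname{Th}_{L_q}(M^\ast)$ is not automatic from the existence of the abstract amalgam; it is a nontrivial assertion about $\operatorname{Th}(M,\sigma_q)$, and it is precisely the place where the fourth axiom earns its keep (all the rigidity is confined to $(\emptyset)^{alg}$). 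The paper's proof also glosses over the analogous issue (why the amalgam's $L_q$-theory agrees with that of $(M,\sigma_q)$), so the two sketches are at a comparable level of detail; but your version makes the cardinality and realizability questions explicit and then leaves them unresolved, which is worth flagging.
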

\begin{proof}
 First, $S_\qq$ implies all these: if $a$ is algebraic of degree $m$ over the prime field, then $\sigma_q(a) = a$ because $\sigma_{\frac{q}{m!}}$ fixes the prime field. Second, if $(M, \sigma_q)$ satisfies these, it can be expanded to a model of $S_\qq$ by taking lots of copies of $M$, freely amalgamating them over the algebraic closure of the prime field in $M$, and defining the new automorphisms to permute the copies in a coherent fashion.\end{proof}

\begin{lem} $T_\qq$ is consistent and $T_q$ is axiomatized by $\Gamma_q := S_q \cup ACFA_q$.\end{lem}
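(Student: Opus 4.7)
The plan is to establish consistency of $T_\qq$ by a direct compactness argument, and then derive the axiomatization of $T_q$ by $\Gamma_q$ by appealing to completeness of ACFA completions in each characteristic.

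For consistency, any finite $F \subseteq T_\qq$ mentions only finitely many automorphism symbols $\sigma_{q_1},\ldots,\sigma_{q_n}$ with each $q_i \neq 0$. Setting $q := 1/N$ for $N$ a common denominator, each $q_i = m_i q$ for some $m_i \in \zz$. Fix any model $(K,\tau)$ of ACFA in some characteristic and interpret $\sigma_{q_i}^K := \tau^{m_i}$ (the remaining $\sigma_r$ may be set to the identity, since they do not appear in $F$). The axioms of $S_\qq$ occurring in $F$ are trivially satisfied, as $\tau^{m_i}$ is an automorphism and $\tau^{m_i + m_j} = \tau^{m_i}\tau^{m_j}$; one just needs to note that the set of indices actually appearing in $F$ is closed under the sums mentioned in $F$, so interpreting $\sigma_{q_i + q_j}$ as $\tau^{m_i + m_j}$ is consistent. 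The axioms of $ACFA_{q_i}$ in $F$ assert that $\tau^{m_i}$ is itself an ACFA automorphism, which is the classical theorem of Chatzidakis and Hrushovski (\cite{udikobizoe}) that every nonzero power of an ACFA automorphism is an ACFA automorphism. Compactness then yields consistency of $T_\qq$.

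For the axiomatization, $\Gamma_q \subseteq T_q$ is immediate from $\Gamma_q \subseteq T_\qq$ and the fact that $\Gamma_q$ consists of $L_q$-sentences. For the reverse, I will argue that $\Gamma_q \cup \{\operatorname{char} = p\}$ is already complete in each characteristic $p$. By \cite{udizoe}, the completions of ACFA in fixed characteristic $p$ are parametrized by the isomorphism class of the action of the automorphism on $\overline{\mathbb{F}_p}$, and the axiom of $S_q$ asserting that $\sigma_q$ fixes $(\emptyset)^{alg}$ pins this action down to the identity, selecting a single complete theory. Both $\Gamma_q \cup \{\operatorname{char} = p\}$ and $T_q \cup \{\operatorname{char} = p\}$ are consistent (the first from the explicit ACFA model, the second from the previous paragraph applied in characteristic $p$), and the inclusion $\Gamma_q \cup \{\operatorname{char} = p\} \subseteq T_q \cup \{\operatorname{char} = p\}$ forces equality once the smaller side is complete. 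Intersecting over $p$ then gives $T_q = \Gamma_q$ as deductively closed $L_q$-theories.

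The main obstacle is correctly invoking the two classical results from \cite{udizoe, udikobizoe}: stability of the ACFA axioms under nonzero powers of the automorphism, and the classification of completions of ACFA by characteristic together with the action on the algebraic closure of the prime field. Once these are in hand, the rest of the argument is formal manipulation of reducts and a routine application of compactness.
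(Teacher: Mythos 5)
Your proposal is correct and rests on exactly the same two facts the paper uses, but it is organized along a different route. The paper funnels the whole lemma through Lemma~\ref{glue-theories}: it reduces both assertions to the single claim that the reduct of $\Gamma_q$ to $L_{mq}$ is precisely $\Gamma_{mq}$ (with $\Gamma_{mq}\subseteq$ reduct coming from $S_q \Rightarrow S_{mq}$ and $ACFA_q \Rightarrow ACFA_{mq}$, and the reverse inclusion coming from completeness of $\Gamma_{mq}$ modulo characteristic). You instead run the compactness argument for consistency of $T_\qq$ by hand---building a finite-signature model from a single ACFA automorphism and its powers---and then prove completeness of $\Gamma_q$ modulo characteristic directly, which forces $T_q = \Gamma_q$ as deductively closed theories. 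Both proofs hinge on the same two ingredients: that compositional powers of an ACFA automorphism again satisfy ACFA, and that completions of ACFA are parametrized by characteristic together with the action on $(\emptyset)^{alg}$. The paper's version is shorter because it offloads the compactness bookkeeping into the general framework already set up in Section~\ref{sec2}; yours is more self-contained at the cost of redoing that bookkeeping inline. One small citation slip: the paper attributes ``$ACFA_q \Rightarrow ACFA_{mq}$'' to \cite{udizoe}, whereas you point to \cite{udikobizoe}; this does not affect the argument. Also, when you note ``each $q_i \neq 0$'' you are tacitly excluding $\sigma_0$ (forced to be the identity by $S_\qq$) from the $ACFA$ requirements; the paper's notation $\{ACFA_q : q\in\qq\}$ needs the same tacit restriction to $q\neq 0$, so you have just made an implicit convention explicit.
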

\begin{proof}
 By Lemma \ref{glue-theories}, it suffices to show that for any $q \in \qq$ and $m \in \zz^{>0}$, the reduct of $\Gamma_q$ to $L_{mq}$ is precisely $\Gamma_{mq}$. To see that $\Gamma_{mq}$ is contained in the reduct, note that $S_q$ clearly implies $S_{mq}$, and it is shown in \cite{udizoe} that $ACFA_q$ implies $ACFA_{mq}$. The completions of $\Gamma_{mq}$ are given by specifying the characteristic (since the action of the automorphism on the algebraic closure on the prime field is already specified), and $\Gamma_{q}$ does not specify the characteristic, so the reduct is no more than $\Gamma_{mq}$. \end{proof}

Thus, each $T_q$ is a definitional expansion of a completion of $ACFA$, and the general results of Section \ref{puresect} combine with properties of ACFA to yield the following.

\begin{prop}
$T_\qq$ is complete after specifying the characteristic; it is simple, quantifier-free stable, and is the model-companion of $S_\qq$.

 In models of $T_\qq$, model-theoretic algebraic closure of a set $A$ is the field-theoretic algebraic closure of $\cup_q \sigma_q(A)$.

 $T_\qq$ eliminates imaginaries, and eliminates quantifiers down to one existential quantifier over the algebraic closure.

 Suppose that $A \subseteq B, C$ and all three are algebraically closed in $L_\qq$; then the following are all equivalent\begin{itemize}
 \item $B \inde^{ACF}_A C$
 \item $B \inde^{q}_A C$ for some $q$
 \item $B \inde^{q}_A C$ for all $q$
 \item $B \inde^{\qq}_A C$
 \end{itemize}
\end{prop}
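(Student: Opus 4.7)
The plan is to obtain this proposition as a straightforward application of the results of Section \ref{sec2} to the directed sufficient collection $\{L_q \sthat q \in \qq\}$, combined with the properties of ACFA summarized above. By the two immediately preceding lemmas, the reduct of $T_\qq$ to $L_q$ is exactly $T_q$, and $T_q$ is a definitional expansion of a completion of ACFA via $\sigma_{nq} = \sigma_q^{\circ n}$. Therefore each $T_q$ inherits from ACFA: completeness modulo characteristic, supersimplicity (hence simplicity), quantifier-free stability, elimination of imaginaries, quantifier elimination down to one existential over the algebraic closure, and model-companionship over $S_q$.

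The first six assertions of the proposition then follow directly by citing the relevant propositions of Section \ref{sec2}: completeness, model-companionship, partial quantifier elimination, elimination of imaginaries, and the characterization-of-$\acl$ property all pass from the $T_q$ to $T_\qq$, while simplicity and quantifier-free stability pass via Lemma \ref{combistabformula} (since the order property for a quantifier-free formula descends to any reduct containing its symbols). For the algebraic-closure characterization it is worth spelling out that $\acl_q(A)$ is the field-theoretic algebraic closure of the field generated by $\cup_{n \in \zz} \sigma_q^n(A)$; since any finite set of rationals admits a common denominator, the directed union $\cup_q \acl_q(A)$ is exactly the field-theoretic algebraic closure of $\cup_{q \in \qq} \sigma_q(A)$, and this in turn equals $\acl_\qq(A)$ by the Section-\ref{sec2} proposition on witnesses of algebraic closure.

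For the fourfold independence equivalence, assume $A \subseteq B, C$ with all three $L_\qq$-algebraically closed; then they are also $L_q$-algebraically closed for every $q$, since $L_q \subseteq L_\qq$. In the reduct $T_q$ (essentially ACFA), the recalled characterization of forking gives $B \inde^q_A C$ iff $\acl_q(AB) \inde^{ACF}_{\acl_q(A)} \acl_q(AC)$; under our hypothesis this collapses to $B \inde^{ACF}_A C$, a condition that does not depend on $q$, which at once yields the equivalence of the first three bullets. Lemma \ref{forkingoesdown} (equivalently, one direction of Theorem \ref{twopurthm}) gives the implication from ``$\inde^q$ for all $q$'' to ``$\inde^\qq$''. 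The converse, namely $B \inde^\qq_A C \Rightarrow B \inde^q_A C$ for all $q$, is the delicate direction flagged in Section \ref{combostabsec} as generally open; here it is unlocked by Fact \ref{exer35}, which applies because ACFA is supersimple and supersimple theories eliminate hyperimaginaries. That last implication is the only real obstacle, and it is handled entirely by invoking Fact \ref{exer35}; everything else in the proof is bookkeeping.
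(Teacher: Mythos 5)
Your proposal is correct and follows essentially the same route as the paper's (very terse) proof: the equivalence of the first three bullets comes from the ACFA characterization of forking once one notes that $L_\qq$-algebraically-closed sets remain $L_q$-algebraically closed, and the last two bullets are tied together by the pure-model-theory results of Section~\ref{sec2}. You are a bit more careful than the paper in splitting that last equivalence into its two directions — attributing $\inde^q$-for-all-$q$ $\Rightarrow$ $\inde^\qq$ to Lemma~\ref{forkingoesdown} and the harder converse to Fact~\ref{exer35} via supersimplicity/elimination of hyperimaginaries in ACFA — whereas the paper simply cites Fact~\ref{exer35} for both; your version is the more precise reading of Theorem~\ref{twopurthm}.
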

 \begin{proof} The last two are equivalent by Fact \ref{exer35}. Since algebraically closed sets remain algebraically closed in reducts, the equivalence of the first three follows from the characterization of non-forking in ACFA. \end{proof}

\begin{sheep}
 $$B \inde^\qq_A C  \mbox{ if and only if }  \acl_\qq(AB) \inde^{ACF}_{\acl_\qq(A)} \acl_\qq(AC) $$
\end{sheep}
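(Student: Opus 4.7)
The plan is to reduce to the preceding proposition by passing to algebraic closures, which shifts the hypothesis from $A \subseteq B, C$ all being $L_\qq$-algebraically closed to the weaker situation in the corollary. Let me abbreviate $A^* := \acl_\qq(A)$, $B^* := \acl_\qq(AB)$, $C^* := \acl_\qq(AC)$. Then $A^* \subseteq B^*$ and $A^* \subseteq C^*$, and all three are $L_\qq$-algebraically closed. So the preceding proposition applies to the triple $(A^*, B^*, C^*)$ and yields
\[
B^* \inde^{\qq}_{A^*} C^* \iff B^* \inde^{ACF}_{A^*} C^*.
\]
This already supplies the right-hand side of the biconditional to be proved, so it remains only to replace the left-hand side $B \inde^\qq_A C$ by $B^* \inde^{\qq}_{A^*} C^*$.

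Next I would invoke the standard bookkeeping fact from simple theory: forking independence of $B$ from $C$ over $A$ is equivalent to forking independence of $\acl^{eq}(AB)$ from $\acl^{eq}(AC)$ over $\acl^{eq}(A)$. The preceding proposition tells us that $T_\qq$ is simple and eliminates imaginaries, so we may replace $\acl^{eq}$ with $\acl_\qq$ throughout and obtain
\[
B \inde^\qq_A C \iff B^* \inde^\qq_{A^*} C^*.
\]
Chaining this with the biconditional from the preceding proposition produces exactly
\[
B \inde^\qq_A C \iff B^* \inde^{ACF}_{A^*} C^* = \acl_\qq(AB) \inde^{ACF}_{\acl_\qq(A)} \acl_\qq(AC),
\]
which is the corollary.

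The only step that requires care is the reduction to algebraic closures, but in a simple theory with elimination of imaginaries this is a routine general fact rather than a new obstacle; everything genuinely $\qq$ACFA-specific has already been packaged into the preceding proposition.
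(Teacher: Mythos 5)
Your proof is correct and is the natural way to fill in this unproved corollary: the paper states it without a proof, relying on the preceding proposition together with the standard bookkeeping that forking in a simple theory is invariant under replacing the three sets by their algebraic closures (using elimination of imaginaries to avoid $\acl^{eq}$). Your chain of two biconditionals is exactly what is implicitly intended.
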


The following observation makes the results of the pure section particularly easy to apply to $\qq$ACFA.

\begin{prop}
 For any $q_0 \in \qq$, the collection
 $ \{ L_q \sthat \exists n\in \nn\, nq = q_0\}$
  of subsignatures of $L_\qq$ is also sufficient. The collection $\{ L_{\frac{1}{n!}} \sthat n \in \nn^+ \}$ is also sufficient.
\end{prop}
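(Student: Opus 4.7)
My plan is to unwind what \emph{sufficient} means: for either proposed collection $\mathcal{C}$, I need to show every finite subsignature $F$ of $L_\qq$ is contained in some $L_q \in \mathcal{C}$. After stripping off the ring symbols $\{+, \cdot, 0, 1\}$ (which lie in every $L_q$), $F$ is specified by finitely many automorphism indices $q_1, \ldots, q_k \in \qq$. Since $L_q$ contains $\sigma_r$ exactly when $r \in \zz q$, the task reduces in each case to producing a single rational $q$ of the prescribed form such that every $q_i$ is an integer multiple of $q$.

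I would dispatch the second collection first as a warm-up. Given $q_1, \ldots, q_k$, write each $q_i = a_i/b_i$ with $b_i \in \nn^+$ and choose any $n \geq \max_i b_i$. Then $b_i \mid n!$ for every $i$, so each $q_i$ is an integer multiple of $1/n!$, and $F \subset L_{1/n!}$.

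For the first collection, the interesting case is $q_0 \neq 0$ (when $q_0 = 0$, every $q \in \qq$ satisfies $0 \cdot q = q_0$, so the collection contains every $L_q$ and sufficiency follows from the second part). My strategy is to search for $q$ of the form $q_0/N$ for a positive integer $N$ to be chosen; any such $q$ automatically belongs to the collection via $n = N$. Writing each ratio $q_i/q_0$ in lowest terms as $r_i/s_i$ with $s_i \in \nn^+$ (using $r_i = 0, s_i = 1$ if $q_i = 0$), the condition $q_i = (r_i N / s_i)\, q \in \zz q$ translates into $s_i \mid N$. Setting $N := \operatorname{lcm}(s_1, \ldots, s_k)$ then simultaneously satisfies every divisibility, completing the argument.

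I anticipate no genuine obstacle: both halves reduce to elementary facts about denominators in $\qq$, and the only mildly fiddly point is the edge case $q_0 = 0$, which is either vacuous or subsumed by the second part of the proposition.
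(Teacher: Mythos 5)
Your proof is correct. The paper states this proposition without any proof at all (it is treated as an immediate observation), so there is no "paper approach" to compare against; your argument is the natural one that the authors evidently considered too routine to write out. The reduction to a divisibility statement about finitely many denominators, the choice $n \geq \max_i b_i$ so $b_i \mid n!$ in the second case, and the choice $N = \operatorname{lcm}(s_1,\ldots,s_k)$ with $q = q_0/N$ in the first case are exactly the right moves, and your handling of the $q_0 = 0$ edge case is a sensible clarification that the paper elides.
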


\subsection{Fine Structure: $L_\qq$-rank and definable structure on $L_q$-minimal partial types.}

In this section, we use the results of Section \ref{geostabsec} to see what happens to a minimal type in ACFA as compositional roots of the automorphism are added to the signature. We first show that the ranks of the fixed fields of the named automorphisms explode, making $\qq$ACFA neither supersimple nor quantifier-free superstable.

\begin{deff} \label{fcapdef}
In a model of $\qq$ACFA, the fixed field of the named automorphism $\sigma_q$ is denoted by $F_q$.
The union and the intersection of all these are denoted by $F_\cup:= \cup_q F_q$ and $F_\cap := \cap_q F_q$.
\end{deff}

\begin{prop} \label{namedfieldprop}
The $L_\qq$-Lascar rank of $F_q$ is undefined. \end{prop}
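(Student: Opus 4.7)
The plan is to exhibit an infinite strictly decreasing sequence of $L_\qq$-Lascar ranks inside $F_q$, which is incompatible with $U_\qq(F_q)$ being an ordinal. For each $n \geq 0$, let $r_n := U_\qq(F_{q/2^n})$ denote the $L_\qq$-Lascar rank of the generic $L_\qq$-type of the fixed field of $\sigma_{q/2^n}$. Since $F_{q/2^n} \subseteq F_q$ we have $r_n \leq r_0$, so it suffices to show $r_n > r_{n+1}$ for every $n$: ordinals admit no infinite strictly decreasing sequences, so some $r_n$ must then be $\infty$, whence $r_0 = \infty$.

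The key step is the relation $r_n \geq r_{n+1} + r_{n+1}$. Fix a realization $a$ of the generic $L_\qq$-type of $F_{q/2^n}$ in a sufficiently saturated model of $\qq$ACFA; by genericity $a \notin F_{q/2^{n+1}}$, so $b := \sigma_{q/2^{n+1}}(a)$ satisfies $b \neq a$. Since $\sigma_{q/2^{n+1}}^2 = \sigma_{q/2^n}$ fixes $a$, the automorphism $\sigma_{q/2^{n+1}}$ swaps the pair $\{a, b\}$, so the symmetric functions $s := a+b$ and $p := ab$ both lie in $F_{q/2^{n+1}}$. Moreover $a$ is a root of $x^2 - sx + p$, giving $a \in \acl_\qq(s, p)$ while $s, p \in \operatorname{dcl}_\qq(a)$; hence $a$ and $(s, p)$ are $L_\qq$-interalgebraic over $\emptyset$, so $U_\qq(a/\emptyset) = U_\qq(s, p/\emptyset)$.

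Next I would argue that for this generic $a$, the pair $(s, p)$ consists of $L_\qq$-independent generics of $F_{q/2^{n+1}}$. The ACFA axioms applied in the $L_{q/2^{n+1}}$-reduct (a definitional expansion of ACFA for $\sigma_{q/2^{n+1}}$) give that $(a, b) = (a, \sigma_{q/2^{n+1}}(a))$ is Zariski-generic in the affine plane for a generic $a \in F_{q/2^n} \setminus F_{q/2^{n+1}}$, so the elementary symmetric functions $s, p$ are algebraically independent over $\qq^{alg}$. By the characterization of $L_\qq$-independence as algebraic independence over $L_\qq$-algebraic closures established just above (the Corollary to the main $\qq$ACFA-basics proposition), this translates to $L_\qq$-independence of $s$ and $p$, each a generic of $F_{q/2^{n+1}}$. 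Lascar's inequality in simple theories then yields $U_\qq(s, p/\emptyset) \geq U_\qq(s/\emptyset) + U_\qq(p/s, \emptyset) = r_{n+1} + r_{n+1}$. Combining with the previous paragraph, $r_n \geq r_{n+1} + r_{n+1}$, which strictly exceeds $r_{n+1}$ because $F_{q/2^{n+1}}$ is an infinite definable field, so $r_{n+1} \geq 1$.

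The main technical obstacle is the assertion that $(a, \sigma_{q/2^{n+1}}(a))$ is Zariski-generic in the affine plane—equivalently, that the $L_\qq$-generic type of $F_{q/2^n}$ imposes no algebraic relation between $a$ and $\sigma_{q/2^{n+1}}(a)$ beyond their being swapped by $\sigma_{q/2^{n+1}}$. This is precisely the point where one transfers ACFA's Zariski-density axioms (in the reduct) to $L_\qq$-independence of the symmetric functions $s, p$; it reduces to producing the appropriate $\sigma$-variety $(A, B)^{\sigma_{q/2^{n+1}}\sharp}$ whose generic points are exactly realizations of the $L_\qq$-generic type of $F_{q/2^n} \setminus F_{q/2^{n+1}}$, and observing that its projection to the $(a, \sigma_{q/2^{n+1}}(a))$-plane is dominant.
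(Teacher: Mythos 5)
Your high-level strategy matches the paper's: exhibit the infinite descending chain $F_q \supset F_{q/2} \supset F_{q/4} \supset \cdots$ of definable fixed fields and show the Lascar ranks would have to strictly decrease along it. The paper's proof is terse — it invokes that the inclusions have infinite transcendence degree and that "an infinite descending chain of infinite extensions of definable fields gives rise to an infinite forking chain." Unwrapped, this is the Lascar inequality for definable groups: since $F_{q/2^{n+1}}$ is an additive subgroup of infinite index in $F_{q/2^n}$, one gets $U_\qq(F_{q/2^n}) \geq U_\qq(F_{q/2^{n+1}}) + U_\qq(F_{q/2^n}/F_{q/2^{n+1}}) \geq U_\qq(F_{q/2^{n+1}}) + 1$ directly. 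Your symmetric-functions argument is a different and more explicit mechanism for the same strict inequality, and it's a nice idea, but as written it has a gap.

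The gap is in the assertion that $s$ and $p$ are "each a generic of $F_{q/2^{n+1}}$", which you use to conclude $U_\qq(s) = U_\qq(p/s) = r_{n+1}$. What your construction actually delivers is that $s$ and $p$ are algebraically independent transcendentals in $F_{q/2^{n+1}}$. That is precisely the generic type in the $L_{q/2^{n+1}}$-reduct, but in $L_\qq$ the induced structure on $F_{q/2^{n+1}}$ is much richer — it contains the whole proper definable subfield tower $F_{q/2^{n+2}} \supset F_{q/2^{n+3}} \supset \cdots$ — so "transcendental" is very far from determining the $L_\qq$-type, and nothing you've said rules out that your particular $s$ has $L_\qq$-rank well below $r_{n+1}$. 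Moreover, $r_{n+1}$ is a supremum, and you haven't addressed whether it's attained. The fix is to remember that $F_{q/2^{n+1}}$ is a definable group and the supremum \emph{is} attained by a group-theoretic generic type; you should then choose $s, p$ \emph{first} as two independent realizations of that generic type and show that $x^2 - sx + p$ has a root in $F_{q/2^n} \setminus F_{q/2^{n+1}}$ (which requires checking the discriminant $s^2 - 4p$ is a non-square in the pseudofinite field $F_{q/2^{n+1}}$ for suitable such $s, p$). Once you're invoking generics of definable groups anyway, the group Lascar inequality $U(G) \geq U(H) + 1$ for an infinite-index definable subgroup $H \leq G$ is a shorter route to the same $r_n > r_{n+1}$, and avoids the quadratic altogether.

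A smaller imprecision: you justify $a \notin F_{q/2^{n+1}}$ "by genericity", but "the generic $L_\qq$-type of $F_{q/2^n}$" is not a well-defined single object here (for the same reason — the definable subfield tower splits the transcendental locus into infinitely many types). What you want is to choose $a$ so that $(a, \sigma_{q/2^{n+1}}(a))$ is Zariski-generic in the plane; that forces $a \neq \sigma_{q/2^{n+1}}(a)$, which is the condition you need, and it is the correct replacement for "genericity" throughout.
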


\begin{proof}
These fields form a lattice: $F_{q} \subset F_{mq}$ for any integer $m$, and it follows easily from the axioms of ACFA (and is noted in \cite{udizoe}) that in a sufficiently saturated model, these extensions have infinite transcendence degree. An infinite descending chain of infinite extensions of definable fields gives rise to an infinite forking chain. \end{proof}

Since all non-algebraic $L_q$-types inside $F_q$ are nonorthogonal in the sense of $L_q$, the $L_\qq$-Lascar rank of any non-algebraic $L_{q'}$-type for any $q'$, such as an $L_\qq$-formula, inside these fixed fields is undefined.

As we noted above, all these $F_q$ contain the algebraic closure of the prime field. Since each of these is a fixed field of the automorphism of a model of ACFA, all are pseudofinite. It is easy to see that $F_q^{alg} \cap F_{mq}$ is precisely the unique extension of $F_q$ of degree $m$, and that $F_\cup:= \cup_q F_q$ and $F_\cap := \cap_q F_q$ are algebraically closed fields. From the fact that each $F_q$ is $L_q$-stably-embedded, it follows that the induced structure on $F_\cap$ is just the field structure, so its unique non-algebraic type has $L_\qq$-Lascar rank $1$. On the other hand, $F_\cup$ with all the automorphisms, or even with just the lattice of named subfields $F_q$, may be an interesting structure in its own right.

In positive characteristic, other definable automorphisms arise as compositions of $\sigma_q$ with powers of the Frobenius automorphism $\Phi$. Unlike the situation with one automorphism, where $(K, \Phi\circ \sigma)$ is just as good a model of ACFA as $(K, \sigma)$, adding compositional roots of $\sigma$ but not of $\Phi \circ \sigma$ introduces a real asymmetry. One might try to fix this by also adding compositional roots of $\Phi \circ \sigma$. However, at least if we maintain a requirement that all named automorphisms commute, this would also add compositional roots of the Frobenius itself. That is impossible, as the action of such roots on $\mathbb{F}_p^{alg}$ would have to be a root of $\hat{1}$ in the Pr\"{u}fer group $\hat{\zz}$. The next proposition shows the enormity of this asymmetry: while the Lascar rank of the $F_q$ explodes, the ranks of fixed fields of compositions of (powers of) the Frobenius with $\sigma_q$ remain finite.

\begin{prop} \label{finrakfieldprop}
For any $q_0 \in \qq$ and any integer $m \neq 0$,
$$U_\qq( \sigma_{q_0}(x) = \Phi^m(x)) = m.$$ \end{prop}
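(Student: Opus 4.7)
The formula defines the fixed field of the $L_\qq$-definable automorphism $\sigma_{q_0}\Phi^{-m}$. In each reduct $L_{q_0/(mn)}$, writing $\tau_n := \sigma_{q_0/(mn)}$, this set becomes $\fix((\tau_n^n\Phi^{-1})^m)$: a cyclic Galois extension of degree $m$ of the fieldlike-minimal base $\fix(\tau_n^n\Phi^{-1})$ (minimal by Fact \ref{zitrifact}, since $\gcd(n,1)=1$). The plan is to sandwich $U_\qq$ between two matching bounds of $m$, with one core ACFA rank computation feeding both halves and the transfer machinery of Section \ref{puresect} propagating it to $L_\qq$.

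The core ACFA computation I would carry out first: in any $(K,\sigma) \models $ ACFA of positive characteristic $p$ and any $l \geq 1$, $U_\sigma(\fix((\sigma^l\Phi^{-1})^m)) = m$. For a generic element $a$ of this degree-$m$ cyclic Galois extension of the rank-$1$ base $\fix(\sigma^l\Phi^{-1})$, the elementary symmetric functions $e_1, \ldots, e_m \in \fix(\sigma^l\Phi^{-1})$ of the $(\sigma^l\Phi^{-1})$-orbit of $a$ are $L_\sigma$-independent generics of the rank-$1$ base (the minimal-polynomial coefficient map is a dominant finite $m$-to-$1$ map from $\fix((\sigma^l\Phi^{-1})^m)$ onto an open subset of $\fix(\sigma^l\Phi^{-1})^m$), and $a$ is $L_\sigma$-interalgebraic with $(e_1,\ldots,e_m)$ via its minimal polynomial $x^m - e_1 x^{m-1} + \cdots \pm e_m$; so $U_\sigma(a) = U_\sigma(e_1,\ldots,e_m) = m$. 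Applying this in each $L_{q_0/(mn)}$-reduct with $l=n$, Corollary \ref{rankcomputesheep} gives $U_\qq \leq m$.

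For the lower bound, I would first settle the $m=1$ case of the proposition: in each $L_{q_0/n}$-reduct, $\fix(\sigma_{q_0}\Phi^{-1})$ becomes the fieldlike-minimal $\fix(\tau_n^n\Phi^{-1})$ of rank $1$, so Corollary \ref{rankcomputesheep} and the non-triviality of the fixed field give $U_\qq(\fix(\sigma_{q_0}\Phi^{-1})) = 1$; after relabeling this handles $\fix(\sigma_{q_0/m}\Phi^{-1})$ too. For general $m$, the same minimal-polynomial coefficient map in $L_\qq$ sends a generic $a$ in $\fix(\sigma_{q_0}\Phi^{-m})$ to a generic $m$-tuple in $\fix(\sigma_{q_0/m}\Phi^{-1})^m$, whose $L_\qq$-rank is $m \cdot 1 = m$ by additivity of Lascar rank in the simple theory $\qq$ACFA; since every $e_i \in \acl_\qq(a)$, this yields $U_\qq(a) \geq m$.

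The main obstacle is the core ACFA computation: verifying that the $e_i$'s are genuinely $L_\sigma$-independent generics of the rank-$1$ base $\fix(\sigma^l\Phi^{-1})$. While the minimal-polynomial coefficient parametrization is natural, ruling out hidden $L_\sigma$-algebraic dependencies requires care and uses both the separability of the cyclic Galois extension (ensured by the $\gcd$-coprimality $\gcd(l,1)=1$) and the standard ACFA $\sigma$-forking calculus. A secondary subtlety is the dichotomy with Proposition \ref{namedfieldprop}: the tower $F_q \supsetneq F_{q/2} \supsetneq \cdots$ there causes rank explosion, while here no analogous tower sits above $\fix(\sigma_{q_0/m}\Phi^{-1})$ because any $L_\qq$-definable compositional $d$-th root of $\sigma_{q_0/m}\Phi^{-1}$ would require a $d$-th root of $\Phi$, prevented by the Pr\"{u}fer-group obstruction noted just before the proposition --- this is the conceptual content behind the clean computations in each reduct, and verifying it rigorously (which one does by direct inspection of each reduct, as in the upper-bound argument) is essential.
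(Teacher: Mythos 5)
Your proposal is essentially correct and follows the same route as the paper: transfer to ACFA computations in the reducts $L_{q_0/(mn)}$ via Corollary \ref{rankcomputesheep} (upper bound), and get the matching lower bound from $m$ independent rank-one pieces in $\fix(\sigma_{q_0/m}\Phi^{-1})$ plus the $m=1$ case. The paper is more economical: it cites \cite{udizoe} for the ACFA fact that $\fix(\tau^m)$ has rank $m$ with an $m$-step analysis by $\fix(\tau)$, reduces to $m=1$ immediately, and then runs the Corollary \ref{rankcomputesheep} argument once; you instead re-derive the ACFA computation via elementary symmetric functions inside each reduct, which is correct in substance but unnecessary. Two imprecisions to flag. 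First, $\fix((\sigma^l\Phi^{-1})^m)$ is \emph{not} a degree-$m$ cyclic Galois extension of $\fix(\sigma^l\Phi^{-1})$ as a field: each generic element has degree $m$ over the base and the extension is algebraic, but the whole fixed field has infinite transcendence degree over the prime field and is not a finite extension of the base; only the element-by-element (orbit) picture you actually use is correct. Second, ``additivity of Lascar rank in the simple theory $\mathbb{Q}$ACFA'' is not a theorem --- $\mathbb{Q}$ACFA is not supersimple, and only the Lascar inequalities hold --- but you only need the one-sided inequality $U_\qq(e_1,\ldots,e_m) \geq U_\qq(e_1) + U_\qq(e_2/e_1) + \cdots = m$, which is valid, so the conclusion stands.
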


\begin{proof}
 Let $q_1 := {\frac{q_0}{m}}$, so that $( \sigma_{q_0}(x) = \Phi^m(x) )$ is equivalent to
 $ (\sigma_{q_1}^m(x) = \Phi^m(x))$, which is in turn equivalent to
 $( (\Phi \circ \sigma_{q_1}^{-1})^m (x) = x )$. Now in the reduct to $L_{q_1}$, this is simply the fixed field of the $m$th compositional power of the automorphism $(\Phi \circ \sigma_{q_1}^{-1})$ of a model of ACFA, so by \cite{udizoe} it has Lascar rank $m$, coming from an $m$-step analysis where each step is the fixed field of $(\Phi \circ \sigma_{q_1}^{-1})$. Thus, is suffices to show that $U_\qq( \sigma_{q_1}(x) = \Phi(x)) = 1$; that is, to prove the proposition for $m=1$.

 Since $ \{ L_q \sthat \exists n\in \nn\, nq = q_1\}$ is a sufficient collection of subsignatures of $L_\qq$, by Corollary \ref{rankcomputesheep} it suffices to show that for every $n \in \nn^+$,
 $U_{q_2}( \sigma_{q_1}(x) = \Phi(x)) = 1$ where $q_2 := \frac{q_1}{n}$. This is proved in \cite{udizoe}, as the reduct to $L_{q_2}$ (with automorphism $\sigma_{q_2}$) is a model of ACFA, and $n$ and $1$ are relatively prime. \end{proof}

 While in ACFA we must consider the fixed fields of $\sigma^m \Phi^n$ for various relatively prime $(m,n)$, in $\qq$ACFA the last two propositions take care of all minimal definable fields, as $\sigma_q^m$ is just another $\sigma_{q'}$. We now turn to the other two cases of the Zilber Trichotomy (Fact \ref{zitrifact}).

 The last paragraph of the last proof exemplifies our general approach to the study of the $L_{\qq}$-structure on the set of realizations of a partial $L_q$-type: we note that $ \{ L_{\frac{q}{n}} \sthat  n \in \mathbb{N}^{>0} \}$ is a sufficient collection of subsignatures of $L_\qq$, and that Corollary \ref{rankcomputesheep} and Proposition \ref{geostabprop} allows us to work with one of these at a time.
  The following lighter notation for considering these two automorphisms, including two notions of prolongation, is used heavily in the rest of this section, and also in Section \ref{grpsec}.

 \begin{notass} (In force until the end of this section.) \label{notasigtau}

 \begin{itemize}
 \item $ (K, (\sigma_q)_{q \in \qq})$ is a sufficiently saturated model of $\qq$ACFA.
 \item $E = \acl_\qq(E) \subset K$ be a subfield of $K$ that is algebraically closed in the full signature.
 \item Fix $q \in \qq^\times$ and $n \in \nn^{>0}$; let $\sigma := \sigma_{q}$ and $\tau := \sigma_{\frac{q}{n}}$. We use subscripts $\sigma$ and $\tau$ instead of $q$ and $\frac{q}{n}$.
 \item $\pi$ is a partial $L_\sigma$-type over $E$, and $S$ is the set of its realizations in $K$.
 \end{itemize}
 For any set $X \subset K$ and any element $a \in K$, let
 $$X^\prodplus := X \times \tau(X) \times \ldots \times \tau^{n-1}(X)$$
 $$X^\uniplus := X \cup \tau(X) \cup \ldots \cup \tau^{n-1}(X)$$
 $$a^+ := (a,  \tau(a), \ldots \tau^{n-1}(a))$$
so if $a \in X$, then $a^+ \in X^\prodplus$. Occasionally, we abuse notation and write $a^+ \subset X^\uniplus$.\end{notass}

 We begin with some easy observations.

\begin{lem} \label{babyprolonglem}
\begin{enumerate}
\item $\acl_\tau(a) = \acl_\sigma(a^+)$.
\item $\acl_\tau(X) = \acl_\sigma(X^\uniplus)$.
\item $a \in S$ if and only if $a^+ \in S^\prodplus$, but most elements of $S^\prodplus$ are not of this form.
\item $\tau$ is an automorphism of $(K, \sigma)$.
\item $\tau^i ( S )$ has the same $L_\sigma$ properties (Lascar rank, Zilber Trichotomy classification, etc) as $S$.
\end{enumerate}
\end{lem}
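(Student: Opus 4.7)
The plan is to dispatch each clause as a straightforward bookkeeping exercise, using two underlying facts: any two named automorphisms commute (they form a $\qq$-action on $K$), and every reduct of $\qq$ACFA to the signature generated by a single $\sigma_q$ is a definitional expansion of a completion of ACFA, so $\acl_\sigma(X) = \langle X\rangle_{\sigma,\sigma^{-1}}^{alg}$ (and similarly for $\tau$) as recalled in the ACFA summary. I would handle (4) first: since $\tau$ and $\sigma = \tau^n$ are both named automorphisms they commute, so $\tau$ is an automorphism of the $L_\sigma$-reduct $(K,+,\cdot,\sigma)$.

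For (1), this characterization reduces the claim to an equality of subfields. The $\sigma^{\pm 1}$-orbit of the tuple $a^+ = (a,\tau(a),\ldots,\tau^{n-1}(a))$ is $\bigcup_{0 \leq i < n} \{\tau^{kn+i}(a) : k \in \zz\} = \{\tau^m(a) : m \in \zz\}$, which is exactly the $\tau^{\pm 1}$-orbit of $a$; taking field-theoretic algebraic closures of the fields they generate yields the equality of $\acl_\sigma(a^+)$ and $\acl_\tau(a)$. Part (2) is the identical argument applied elementwise to $X$ and then taking unions.

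Parts (3) and (5) are then immediate. For (3), by definition of $S^\prodplus$ and of $a^+$, having $a^+ \in S^\prodplus$ means $\tau^i(a) \in \tau^i(S)$ for each $0 \leq i < n$, which (since $\pi$ is an $L_\sigma$-type and $\tau^i$ is an $L_\sigma$-automorphism by (4)) is tautologically equivalent to $a \in S$; the parenthetical remark just notes that $S^\prodplus$ has algebraic-geometric dimension $n\cdot\dim(S)$ while the ``diagonal'' image $\{a^+ : a \in S\}$ has dimension $\dim(S)$. For (5), since $\tau$ is an $L_\sigma$-automorphism by (4) and $E$ is setwise $\tau$-invariant (it is $L_\qq$-algebraically closed, hence closed under every $\sigma_q$), the map $\tau^i$ sends the $L_\sigma$-type $\pi$ over $E$ to an $L_\sigma$-type $\tau^i(\pi)$ over $E$ with realization set $\tau^i(S)$; all $L_\sigma$-invariants of a partial type --- Lascar rank, Zilber trichotomy class, (non)orthogonality to any given $L_\sigma$-definable set --- are preserved by $L_\sigma$-automorphisms.

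There is no substantive obstacle here; the only routine point to watch is that in (1) and (2) one might really want the relativized equalities $\acl_\tau(Ea) = \acl_\sigma(Ea^+)$ and $\acl_\tau(EX) = \acl_\sigma(EX^\uniplus)$, which follow by the same orbit argument since $E$ is stable under both $\tau^{\pm 1}$ and $\sigma^{\pm 1}$.
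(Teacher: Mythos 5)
The paper offers no written proof of this lemma --- it is introduced with ``We begin with some easy observations'' --- so there is no argument of the paper's to compare against. Your proposal is a correct verification along exactly the lines the author clearly has in mind: commutativity of the $\qq$-action gives (4), the explicit description $\acl_\sigma(X) = \langle X\rangle_{\sigma,\sigma^{-1}}^{alg}$ from ACFA reduces (1) and (2) to the orbit identity $\{\sigma^k(\tau^i(a))\}_{k\in\zz,\,0\le i<n} = \{\tau^m(a)\}_{m\in\zz}$, and (3) and (5) follow from (4) by routine bookkeeping. The one point worth phrasing a bit more carefully in (5) is orthogonality: an $L_\sigma$-automorphism $\tau^i$ sends ``$S$ is (non)orthogonal to $D$'' to ``$\tau^i(S)$ is (non)orthogonal to $\tau^i(D)$,'' not to the same $D$ in general; this is harmless here because the definable sets that matter (fixed fields of $\sigma$ or of $\sigma$ composed with Frobenius) are $\tau$-invariant since $\tau$ commutes with $\sigma$ and with Frobenius, but the general slogan as you stated it is slightly too strong.
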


Any type of finite $\sigma$-degree is interdefinable with one to which the next two lemmas apply. For example, the lemmas apply whenever $\pi$ contains a formula defining $(A, B)^{\sigma \sharp}$ for some algebraic varieties $A$ and $B$ defined over $E$.

\begin{lem} \label{1stypelem}
With Notation and Assumptions \ref{notasigtau},
suppose that for any $a \models \pi$, $\sigma(a) \in (Ea)^{alg}$ and $\sigma^{-1}(a) \in (Ea)^{alg}$, and let $A$ be the Zariski closure of $S$ over $E$. 

If $a$ $\tau$-forks with some $F = \acl_\qq(F)$ over $E$, then the algebraic locus of $a^+$ over $F$ is a proper subvariety of the locus of $a^+$ over $E$, and in particular a proper subvariety of $A^{\times +}$.

If (some completion of) $\pi$ is $\tau$-nonorthogonal to a $\tau$-minimal $\tau$-type $p$, then $\pi \vee \tau(\pi) \vee \ldots \vee \tau^{n-1}(\pi)$ is $\sigma$-nonorthogonal to the reducts of $p$ and of $p \times \tau(p) \times \ldots \times \tau^{n-1}(p)$ to $L_\sigma$. \end{lem}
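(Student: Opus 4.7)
The plan is to translate each hypothesis via the ACFA characterization of forking (recalled just before Fact \ref{zitrifact}) into a statement about algebraic independence of the prolonged tuple $a^+$. Under Notation and Assumptions \ref{notasigtau}, since $\sigma$ and $\tau$ commute and $E = \acl_\qq(E)$ is $\tau$-invariant, the given closure condition $\sigma^{\pm 1}(a) \in (Ea)^{alg}$ propagates to $\acl_\sigma(Ea^+) = (Ea^+)^{alg}$, and combined with Lemma \ref{babyprolonglem}(1) this yields $\acl_\tau(Ea) = (Ea^+)^{alg}$.

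For the first claim, plug this equality into the ACFA characterization: $a \nind^\tau_E F$ iff $\acl_\tau(Ea) \nind^{ACF}_E F$, i.e., iff the transcendence degree of $(Ea^+)^{alg}$ over $F$ is strictly less than over $E$. Geometrically, this is precisely the assertion that the algebraic locus of $a^+$ over $F$ is a proper subvariety of its locus over $E$. The ``in particular'' clause is then immediate: $a \in S \subset A$, hence $\tau^i(a) \in \tau^i(A)$, so $a^+ \in A^{\prodplus}$ and the locus of $a^+$ over $E$ sits inside $A^{\prodplus}$.

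For the second claim, $\tau$-nonorthogonality of some completion of $\pi$ with $p$ produces a base $M \supset E$ and realizations $a \models \pi|M$, $b \models p|M$ with $a \nind^\tau_M b$. The ACFA characterization rewrites this as $a^+ \nind^\sigma_M b^+$, so the complete $L_\sigma$-types of $a^+$ and of $b^+$ over $M$, which extend $\pi \times \tau(\pi) \times \ldots \times \tau^{n-1}(\pi)|_{L_\sigma}$ and $p \times \tau(p) \times \ldots \times \tau^{n-1}(p)|_{L_\sigma}$ respectively, are $\sigma$-nonorthogonal. To isolate a single component of $a^+$ on the disjunction side, use $\tau$-minimality of $p$: $U_\tau(p) = 1$ together with $a \nind^\tau_M b$ forces $b \in \acl_\tau(Ma) = \acl_\sigma(Ma^+)$, while $b \notin \acl_\sigma(M)$ (else $b \in \acl_\tau(M)$, contradicting $U_\tau(p) = 1$). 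Pick the minimal subtuple $\tau^{i_1}(a), \ldots, \tau^{i_k}(a)$ of $a^+$ making $b$ $\sigma$-algebraic over $M$ together with it, and set $B := \acl_\sigma(M, \tau^{i_1}(a), \ldots, \tau^{i_{k-1}}(a))$. Then $b \nind^\sigma_B \tau^{i_k}(a)$, and by monotonicity also $b^+ \nind^\sigma_B \tau^{i_k}(a)$. The $L_\sigma$-type of $\tau^{i_k}(a)$ over $B$ extends the disjunct $\tau^{i_k}(\pi)|_{L_\sigma}$, witnessing $\sigma$-nonorthogonality with both $p|_{L_\sigma}$ (via $b$) and $p \times \tau(p) \times \ldots \times \tau^{n-1}(p)|_{L_\sigma}$ (via $b^+$).

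The main obstacle is the careful management of arities and bases in the nonorthogonality bookkeeping: the raw $\sigma$-dependence $a^+ \nind^\sigma_M b^+$ is easy, but peeling down to a single $\tau^{i_k}(a)$ on the disjunction side relies essentially on $\tau$-minimality of $p$ to force $b$ into $\acl_\sigma(Ma^+)$, and on finite character of algebraic closure to localize the dependence to one component.
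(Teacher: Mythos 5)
Your proof is correct and takes essentially the same route as the paper's: both parts reduce to the ACFA characterization of forking via the identification $\acl_\tau(Ea)=\acl_\sigma(Ea^+)$, and the second part uses $\tau$-minimality of $p$ to place $b$ (and hence $b^+$) in $\acl_\sigma$ of realizations of the disjunction. Your "peeling down" to a single disjunct $\tau^{i_k}(\pi)$ spells out a finite-character step that the paper leaves implicit, which is a welcome bit of extra care rather than a divergence in method.
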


\begin{proof}
If $a$ $\tau$-forks with some $F = \acl_\qq(F)$ over $E$, then $\acl_\tau(Ea)$ is not field-independent from $F$ over $E$. Now $a^+$ contains a transcendence basis for $\acl_\tau(Ea)$ over $E$, which must now not be algebraically independent over $F$. The variety $A^{\times +}$ defined over $E$ contains $S^{\times +}$.

If $\pi$ is $\tau$-nonorthogonal to a $\tau$-minimal $\tau$-type $p$, then (perhaps after a base extension) some realization $b$ of $p$ is $\tau$-algebraic over $S$. Then $b^+$ is also $\tau$-algebraic over $S$, and so both $b$ and $b^+$ are $\sigma$-algebraic over $S^{\cup +}$, the set of realizations of $\pi \vee \tau(\pi) \vee \ldots \vee \tau^{n-1}(\pi)$. \end{proof}

\begin{lem} \label{2ndtypelem}
With Notation and Assumptions \ref{notasigtau},
suppose that for any $a \models \pi$, $\sigma(a) \in (Ea)^{alg}$ and $\sigma^{-1}(a) \in (Ea)^{alg}$, and let $A$ be the Zariski closure of $S$ over $E$.
\begin{enumerate}

\item If $\acl_\sigma$ is trivial on $S$, then so is $\acl_\tau$.

\item If $\acl_\sigma$ is trivial on $S$ and $\pi$ is $\sigma$-minimal, then $\pi$ is also $\tau$-minimal.

\item If $\pi$ is $\sigma$-orthogonal to $\fix(\sigma)$ (respectively, to all $\sigma$-definable fixed fields), then $\pi$ is also $\tau$-orthogonal to $\fix(\tau)$ (respectively, to all $\tau$-definable fixed fields).
\end{enumerate} \end{lem}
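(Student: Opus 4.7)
The three parts share a common reduction via Lemma \ref{babyprolonglem}(2): $\acl_\tau(Ex) = \acl_\sigma(Ex^+)$. Under the finite-$\sigma$-degree hypothesis $\sigma(a), \sigma^{-1}(a) \in (Ea)^{alg}$, the set $\acl_\sigma(Ea^+)$ is $\tau$-invariant, since $\tau(a^+) = (\tau(a), \ldots, \tau^{n-1}(a), \sigma(a))$ and $\sigma(a) \in (Ea)^{alg} \subset \acl_\sigma(Ea^+)$. Thus $\acl_\tau(Ea)$ is a bounded $\tau$-orbit enlargement of $\acl_\sigma(Ea)$, and each $\tau^j$ acts as an $L_\sigma$-automorphism of $K$ setwise fixing $E$.

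For (1), given $a_1, \ldots, a_k \in S$ with $a_k \in \acl_\tau(Ea_1 \cdots a_{k-1}) = \acl_\sigma(Ea_1^+ \cdots a_{k-1}^+)$, the witnesses lie in $S^{\cup +} = \bigcup_{j=0}^{n-1}\tau^j(S)$, and by Lemma \ref{babyprolonglem}(5) each $\tau^j(S)$ individually has $L_\sigma$-trivial algebraic closure. I would extract a minimal $L_\sigma$-spanning subtuple $T$ of $a_1^+ \cdots a_{k-1}^+$ with $a_k \in \acl_\sigma(ET)$, then argue, using the $L_\sigma$-automorphism $\tau^{-j}$ on each $\tau^j(a_i) \in T$ (which transports it back into $S$ while setwise fixing $E$) together with the $\tau$-invariance of the component closures $\acl_\sigma(Ea_i^+)$, that $T$ can be taken inside a single $a_{i_0}^+$; hence $a_k \in \acl_\sigma(Ea_{i_0}^+) = \acl_\tau(Ea_{i_0})$.

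For (2), part (1) gives $\acl_\tau$-triviality on $S$, so only $U_\tau(\pi) \leq 1$ remains. Given $b \models \pi$ and $F = \acl_\qq(F) \supset E$ with $b \nind^\tau_E F$, the dependence $\acl_\sigma(Eb^+) \nind^{ACF}_E F$ must be witnessed by some coordinate $\tau^j(b)$. Pulling back by the $L_\sigma$-automorphism $\tau^{-j}$ (which fixes $F$ setwise since $F$ is $\tau$-invariant) reduces matters to $\sigma$-dependence of $b$ with $F$ over $E$ relative to the $\sigma$-minimal $L_\sigma$-type $\tau^{-j}(\pi)$; $\sigma$-minimality then forces $b \in \acl_\sigma(F) \subset F = \acl_\tau(F)$.

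For (3), $\tau^n = \sigma$ gives $\fix(\tau^k\Phi^l) \subset \fix(\sigma^k\Phi^{nl})$, so every $\tau$-definable fixed field sits inside a $\sigma$-definable one; moreover, for $c$ in a $\tau$-definable fixed field, the orbit $c^+$ is field-algebraic over $c$, so $\acl_\tau(Ec) = \acl_\sigma(Ec)$. Given $a \models \pi$, $\sigma$-orthogonality to the enclosing $\sigma$-definable fixed field yields $\acl_\sigma(Ea) \inde^{ACF}_E \acl_\sigma(Ec)$; applying the $L_\sigma$-automorphisms $\tau^j$ (which fix $c$ up to $\acl_\sigma(Ec)$) and using transitivity of ACF-independence propagates this to $\acl_\sigma(Ea^+) \inde^{ACF}_E \acl_\sigma(Ec)$, which is $\tau$-orthogonality. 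The main obstacle is the combinatorial argument in (1): combining layerwise $L_\sigma$-triviality of the $\tau^j(S)$ into triviality on their union, which requires leveraging the $\tau$-invariance of the individual closures $\acl_\sigma(Ea_i^+)$ to prevent cross-layer dependencies from spanning multiple $a_i$'s.
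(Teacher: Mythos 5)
Your proposal diverges from the paper's proof in all three parts, and each part has a genuine gap.

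\textbf{Part (1).} You acknowledge the core step (``$T$ can be taken inside a single $a_{i_0}^+$'') is unfinished, and your automorphism-transport argument does not obviously prevent a dependence from genuinely spanning several $a_i^+$'s: moving $\tau^j(a_i)$ back into $S$ by $\tau^{-j}$ changes nothing about which of the original $a_1,\ldots,a_{k-1}$ it belongs to. The paper's route is much more direct: it records that $S^{\cup +}$ is a finite union of $L_\sigma$-definable, $L_\sigma$-minimal, $L_\sigma$-trivial sets, hence is itself $L_\sigma$-trivial; combined with $\acl_\sigma(C^{\cup +}) = \acl_\tau(C)$ this yields $b\in\acl_\sigma(d)$ for a single $d\in C^{\cup +}$, and writing $d=\tau^i(c)$ finishes. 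The fact that a finite union of trivial minimal sets is trivial is the load-bearing point, and your sketch essentially tries to reprove it from scratch without landing it.

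\textbf{Part (2).} Your reduction ``the dependence $\acl_\sigma(Eb^+)\nind^{ACF}_E F$ must be witnessed by some coordinate $\tau^j(b)$'' is false in general: $F$ could contain, say, $b+\tau(b)$ while each of $b$, $\tau(b)$ is individually field-generic over $F$. Nothing in the hypotheses rules this out at the stage where you invoke it (you have not yet established $\tau$-minimality, so you cannot use it to localize the forking). The paper instead uses Lemma~\ref{1stypelem}(1) to reduce to showing that every proper subvariety $V\subsetneq A^{\times +}$ meeting $S^{\times +}$ in an infinite set is already defined over $E$, and argues that because the $\tau^i(S)$ are $L_\sigma$-minimal and trivial, any such $V$ is essentially a component of an intersection of \emph{binary} interalgebraicities between nonorthogonal $\tau^j(S)$ and $\tau^{j'}(S)$; these do not come in families, so their parameters are $L_\sigma$-algebraic over $E$. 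This is a genuinely different mechanism.

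\textbf{Part (3).} The claim ``for $c$ in a $\tau$-definable fixed field, $\acl_\tau(Ec)=\acl_\sigma(Ec)$'' fails already for $c\in\fix(\tau^2)$ with $n$ even: then $\sigma(c)=\tau^n(c)=c$ so $\acl_\sigma(Ec)=(Ec)^{alg}$, but $\tau(c)$ is generically transcendental over $Ec$. Moreover, even granting this, concluding $\acl_\sigma(Ea^+)\inde^{ACF}_E\acl_\sigma(Ec)$ from the coordinatewise statements $\acl_\sigma(E\tau^j(a))\inde^{ACF}_E\acl_\sigma(Ec)$ requires more than ``transitivity'' and is not valid in general, and you also quietly restrict to independence over $E$ rather than over all bases, which is what orthogonality demands. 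The paper argues contrapositively via Lemma~\ref{1stypelem}(2): if $\pi$ were $\tau$-nonorthogonal to a minimal $\tau$-fixed field $\fix(\tau^m\Phi^{m'})$, then $\pi^{\times +}$ would be $\sigma$-nonorthogonal to a set sitting inside $\fix(\sigma^m\Phi^{m'n})$, contradicting the hypothesis.

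In short, each part of your proposal hinges on a claim that fails (localization of forking to one coordinate, $\acl_\tau=\acl_\sigma$ on fixed fields, or an unfinished combinatorial consolidation), and the paper's actual proof avoids all three by working with $S^{\cup +}$ as a union of trivial minimals, with Zariski closures and the rigidity of binary correspondences, and with Lemma~\ref{1stypelem}(2).
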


\begin{proof}
For the first part, suppose that $b \in S$, $C \subset S$, and $b \in \acl_\tau(C)$; we need to show that $b \in \acl_\tau(c)$ for some $c \in C$. Clearly, $C^\uniplus \subset S^\uniplus$ and $b \in S \subset S^\uniplus$. Now $S^\uniplus$ is the finite union of $L_\sigma$-definable, $L_\sigma$-minimal, and $L_\sigma$-trivial sets $\tau^i(S)$, so it is itself $L_\sigma$-definable, $L_\sigma$-minimal, and $L_\sigma$-trivial. By Lemma \ref{babyprolonglem}, $\acl_\sigma(C^\uniplus) = \acl_\tau(C) \ni b$. Thus, by $\sigma$-triviality of $S^\uniplus$, we have $b \in \acl_\sigma(d)$ for some $d \in C^\uniplus$. Now $d = \tau^i(c)$ for some $i$ and some $c \in C$, and so $b \in \acl_\tau(c)$, as wanted.

For the second part, by the first part of Lemma \ref{1stypelem}, it suffices to show  that all varieties $V \subsetneq A^\prodplus$ such that $S^\prodplus \cap V$ is infinite are defined over $E$. Indeed, any such $V$ defines an interesting algebraic relation among realizations of $\tau^i(\pi)$, each of which is $L_\sigma$-minimal and $L_\sigma$-trivial. Therefore, $V$ must be essentially binary: a component of the intersection of $V_{jj'}$ each of which witnesses $L_\sigma$-nonorthogonality between nonalgebraic types $q_j \in \tau^j(S)$ and $q_{j'} \in \tau^{j'}(S)$.
 As all $\tau^i(S)$ are $L_\sigma$-minimal and $L_\sigma$-trivial, such interalgebraic relations between them do not come in families; that is, there are only finitely many parameters that give such a thing for any particular formula. Thus, these parameters are in the model-theoretic $L_\sigma$-algebraic closure of any set over which both $\tau^{j'}(S)$ and $\tau^{j}(S)$ are defined, such as $E$.

For the last part, suppose towards contradiction that $\pi$ is $\tau$-nonorthogonal to the fixed field of $\tau^{\circ m} \circ \Phi^{\circ m'}$, where $m \in \zz^\times$, $m' \in \nn$, and $\Phi$ is the Frobenius automorphism. In the model $(K, \tau)$ of ACFA, fixed fields are analyzable in minimal fixed fields, so we may assume that this field is minimal. Now by the second part of Lemma \ref{1stypelem},
$\pi^{\times +}$ is $\sigma$-nonorthogonal to the $\sigma$-reduct of the fixed field of $\tau^{\circ m} \circ \Phi^{\circ m'}$, which is inside the fixed field of $(\tau^{\circ m} \circ \Phi^{\circ m'})^{\circ n} = \sigma^{\circ m} \circ \Phi^{\circ m'n}$. If $m' = 0$, this makes $\pi$ $\sigma$-nonorthogonal to $\fix(\sigma^{\circ m})$ and, therefore, to $\fix(\sigma)$; in any case, this makes $\pi$ $\sigma$-nonorthogonal to some fixed field. \end{proof}

All parts of Theorem \ref{better-qacfa} have now been proved.

\begin{proof} (Proof of Theorem \ref{better-qacfa}.) \label{pfthm5}
The first two parts of Lemma \ref{2ndtypelem} combine with Corollary \ref{rankcomputesheep} and Proposition \ref{geostabprop} to show that $L_q$-minimal $L_q$-trivial partial $L_q$ types remain minimal trivial in the full signature $L_\qq$. Propositions \ref{finrakfieldprop} and \ref{namedfieldprop} describe what happens to $L_q$-minimal fields, and the last part of Lemma \ref{2ndtypelem} shows that orthogonality to these fields is also preserved. \end{proof}


\section{Groups in $\mathbb{Q}$ACFA.} \label{grpsec}

 While the Lascar rank of the $\sigma$-minimal $\sigma$-trivial $(A, B)^{\sigma \sharp}$ remains $1$ as compositional roots of $\sigma$ are added to the language, its (difference) Krull dimension may very well go up. For example, if $n=2$ and $B$ is the graph of $f^\tau \circ f$ for some function $f: A \rightarrow A^\tau$, the graph of $f$ viewed as a subvariety of $A \times \tau(A)$ witnesses that the $\tau$-Krull dimension of $(A, B)^{\sigma \sharp}$ is at least $2$. We expect that this does not happen for generic varieties $A$ and $B$ (so that their Krull dimension remains $1$, even in the full signature), and that this should not ever happen infinitely often (so that the Krull dimension remains defined for minimal trivial $(A, B)^{\sigma \sharp}$, even in the full signature). At first glance it seems that the degrees of the correspondences give a bound, but the fiber product of several iterations of a correspondence might be reducible, and its components might have lower degrees, not even dividing the degrees of the reducible fiber product, so that doesn't work  - unless everything takes place in the category of algebraic groups.
 In Section \ref{gpdegsec}, we work out the details of this idea, culminating with Proposition \ref{groupdegreelem} that proves many cases of Conjecture \ref{grouconj}.

 For groups, the same issue actually increases the Lascar rank. For example, with $n=2$, the $\sigma$-grouplike $\sigma$-minimal group $G$ defined by the equation $\sigma(x) = x^9$ has an infinite, infinite-index subgroup $H$ defined by $\tau(x) = x^3$. The uniformly $L_\tau$-definable cosets of $H$ partition $G$ into infinitely many infinite subsets, making the $L_\tau$ Lascar rank of $G$ at least $2$. This happened because not only is the degree of the endomorphism $x \mapsto x^9$ a square of an integer, but the endomorphism itself has a compositional root $x \mapsto x^3$. In Section \ref{lastsec}, we prove more cases of Conjecture \ref{grouconj} by verifying that most (quasi)endomorphisms of nice algebraic group have few compositional roots.

 Before we recall the structure of $\sigma$-definable $\sigma$-minimal $\sigma$-grouplike groups worked out in \cite{hrumamu}, we make a simple observation that will simplify our bookkeeping.

\begin{lem} \label{nonorthinv}
With Notation and Assumptions \ref{notasigtau}, suppose that $p$ and $q$ are $L_\sigma$-minimal, $L_\sigma$-nonorthogonal $L_\sigma$-types. Then $p$ and $q$ have the same $L_\tau$-rank and the same $L_{\mathbb{Q}}$-rank.
 
 In particular, it suffices to prove Conjecture \ref{grouconj} for generic types of actual $L_\sigma$-minimal groups, rather than arbitrary grouplike types.
  
  In another particular, generic types of an $L_\sigma$-definable group $G$ have the same $L_\tau$-rank and the same $L_{\mathbb{Q}}$-rank as generic types of finite-index subgroups of $G$, finite-index group extensions of $G$, and finite-kernel quotients of $G$.\end{lem}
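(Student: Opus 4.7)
The plan is to reduce the same-rank statement for nonorthogonal minimal types to interalgebraicity of realizations, a property manifestly preserved under signature expansion. The supersimplicity of ACFA provides the standard picture I would exploit: if $p$ and $q$ are $L_\sigma$-minimal and $L_\sigma$-nonorthogonal types over $E$, then after a nonforking $L_\sigma$-base extension to some $F \supseteq E$ with $F = \acl_\sigma(F)$, one can find $a \models p|F$ and $b \models q|F$ that are $L_\sigma$-interalgebraic over $F$, i.e., $a \in \acl_\sigma(Fb)$ and $b \in \acl_\sigma(Fa)$. Indeed, minimality forces any forking extension to land in the algebraic closure, so $L_\sigma$-nonorthogonality at rank $1$ is equivalent to pairwise interalgebraicity after a sufficient base change.

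Next I would observe that $\acl_\sigma(X) \subseteq \acl_\tau(X) \subseteq \acl_\qq(X)$ for every $X$, since $\sigma = \tau^n$ gives $\langle X \rangle_{\sigma,\sigma^{-1}} \subseteq \langle X \rangle_{\tau,\tau^{-1}}$ and both sit inside the $L_\qq$-generated difference field. Thus the $L_\sigma$-interalgebraicity of $a$ and $b$ over $F$ upgrades automatically to $L_\tau$- and $L_\qq$-interalgebraicity over $\acl_\qq(F)$. Combining this with the general fact that interalgebraic tuples have equal Lascar rank over an algebraically closed base (in any simple theory), one concludes $U_\tau(a/\acl_\qq(F)) = U_\tau(b/\acl_\qq(F))$, and likewise for $U_\qq$. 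By Fact \ref{exer35} applied in the reducts, the base extension from $E$ to $\acl_\qq(F)$ is both $L_\tau$- and $L_\qq$-nonforking, so these equalities descend to $U_\tau(p) = U_\tau(q)$ and $U_\qq(p) = U_\qq(q)$.

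The ``in particular'' clauses follow immediately. By the definition of grouplike (Fact \ref{zitrifact}), a grouplike $L_\sigma$-minimal type is $L_\sigma$-nonorthogonal to a generic type of some $L_\sigma$-minimal group, so the main statement reduces the grouplike case of Conjecture \ref{grouconj} to generic types of such groups. For a finite-index subgroup $H \leq G$, a generic $a$ of $G$ lies in a coset $aH$ among the finitely many elements of $G/H$, and fixing a coset representative $g$ exhibits $g^{-1}a$ as a generic of $H$ interalgebraic with $a$ over $g$; the finite-index extension and finite-kernel quotient cases follow from the same coset argument and from the finite-fiber quotient map, respectively. The main obstacle I anticipate is essentially bookkeeping: arranging $F$ to be simultaneously $L_\qq$-algebraically closed while tracking nonforking base extensions in all three signatures. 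Fact \ref{exer35} together with the algebraic-closure characterization of $L_\qq$-forking should make this routine, but the notation is a little fiddly.
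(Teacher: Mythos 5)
Your proof takes essentially the same route as the paper's: both exploit that nonorthogonality of $L_\sigma$-minimal types is witnessed by definable interalgebraicity (a finite-to-finite correspondence) between realizations, which persists in any expansion of the signature and preserves Lascar rank; the paper's own proof is even terser, simply asserting these two facts and then invoking the Zilber trichotomy for the ``in particular'' about grouplike types.

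However, one step needs repair. You cite Fact \ref{exer35} to conclude that the base extension from $E$ to $\acl_\qq(F)$ is $L_\tau$- and $L_\mathbb{Q}$-nonforking, but that fact runs the other way: it carries independence from the full theory \emph{down} to a reduct. What you actually have is $L_\sigma$-nonforking, from choosing $a \models p|F$ in ACFA, and since $\acl_\mathbb{Q}(Ea)$ can be strictly larger than $\acl_\sigma(Ea)$, this does not by itself give $L_\tau$- or $L_\mathbb{Q}$-nonforking of $a$ from $F$ over $E$. The repair is to first fix $a \models p$, then by saturation choose a copy $F' \equiv_E F$ of the parameter set with $F'$ $L_\mathbb{Q}$-independent from $a$ over $E$; the finite-to-finite correspondence remains definable over $F'$, and \emph{now} Fact \ref{exer35} correctly descends the $L_\mathbb{Q}$-independence to $L_\tau$-independence, so the rank comparison over $E$ goes through. (Equivalently, one can invoke the proposition of Section 3.2 characterizing $\mathbb{Q}$ACFA-forking via ACF-independence of $L_\mathbb{Q}$-closures, taking care that all the relevant sets are $L_\mathbb{Q}$-algebraically closed.) The ``in particular'' clauses are handled exactly as in the paper.
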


 \begin{proof}
 Nonorthogonality between $p$ and $q$ is witnessed by an $L_\sigma$-definable finite-to-finite correspondence between the sets of their realizations. This correspondence is, of course, still definable in any expansion $L^+$ of $L_\sigma$, such as $L_\tau$ or $L_{\mathbb{Q}}$. Lascar rank is preserved by definable finite-to-finite correspondences, so $p$ and $q$, now viewed as partial $L^+$-types, have the same Lascar rank in $L^+$.

 By the Zilber Trichotomy for ACFA, in the reduct to $L_\sigma$, any minimal grouplike type $q$ is nonorthogonal to a generic type $p$ of a one-based minimal definable group.
\end{proof}

As before, Corollary \ref{rankcomputesheep} and Proposition \ref{geostabprop} allow us to do all the hard work in finite reducts $L_q$ of $L_\qq$, which is to say in ACFA. We summarize the thorough treatment of one-based groups in ACFA in \cite{hrumamu} in Sections \ref{group-redct-sect} and \ref{quaends}, and give some proofs in order to introduce the notation and the intuitions behind it.

\subsection{Encoding minimal groups in ACFA in terms of algebraic groups.} \label{group-redct-sect}

 \begin{fact} \label{whoisa0}
 \textbf{Every $L_\sigma$-definable, $L_\sigma$-minimal, $L_\sigma$-one-based group $G_0$ is, up to finite-index subgroups and finite-kernel images, a Zariski-dense subgroup of a simple algebraic group $A_0$.}
 \begin{enumerate}
 \item Up to nonorthogonality (finite-index subgroups and finite-kernel images), any definable one-based group $G_0$ is a subgroup of a commutative algebraic group $A_0$. Without loss of generality, $G_0$ is Zariski-dense if $A_0$.
 \item Without loss of generality, the algebraic group $A_0$ is simple in the sense that there is no infinite algebraic subgroup $C$ of $A_0$ of infinite index, that is with infinite quotient $A_0/C$. The reason is that such a subgroup would give rise to a subgroup $H = C \cap G_0$ of $G_0$ and to the quotient $G_0/H$. If one of these is finite, the other is a minimal one-based group nonorthogonal to $G_0$, contained in a lower-dimensional algebraic group $C$ or $A_0/C$. If both $H$ and $G_0/H$ are infinite, $G_0$ cannot be minimal.
 \item Without loss of generality, the algebraic group $A_0$ is connected: the intersection $H$ of $G_0$ with a finite-index subgroup of $A_0$ would be a finite-index subgroup of $G_0$, thus nonorthogonal to $G_0$, contained in a same-dimensional, lower-degree algebraic group.
 \item So, the only algebraic subgroups of $A_0$ are finite.
 \end{enumerate} \end{fact}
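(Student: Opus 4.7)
My plan is to isolate the single deep input needed and then derive the claim by three successive reductions, each powered by $L_\sigma$-minimality of $G_0$. The essential black box is the main theorem on one-based groups in ACFA from \cite{hrumamu}: every $L_\sigma$-definable one-based group is, up to finite-index subgroups and finite-kernel images, an $L_\sigma$-definable subgroup of a \emph{commutative} algebraic group $A_0$. This is item (1), and it is the only part requiring serious model-theoretic machinery, essentially a group-configuration argument tailored to ACFA; I would simply quote it.

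Granted the embedding $G_0 \hookrightarrow A_0(K)$, my first move is to replace $A_0$ by the Zariski closure of $G_0$ inside it. This is again an algebraic subgroup (still commutative, being closed in a commutative group), and by construction $G_0$ is now \zdense in $A_0$. This is a cosmetic reduction but essential for the bookkeeping that follows.

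To arrange that $A_0$ is \emph{simple} in the sense of item (2), I would induct on $\dim A_0$. Suppose $A_0$ admits an infinite algebraic subgroup $C$ of infinite index. Then $H := G_0 \cap C(K)$ and the image of $G_0$ in $(A_0/C)(K)$, which is $G_0/H$, are $L_\sigma$-definable subgroups sitting inside the strictly lower-dimensional algebraic groups $C$ and $A_0/C$ respectively. Since $U_\sigma(G_0) = 1$, at least one of $H$ and $G_0/H$ is finite. If $H$ is finite, $G_0/H$ is a finite-kernel quotient of $G_0$, \zdense in its closure inside $A_0/C$; if $G_0/H$ is finite, then $H$ is a finite-index subgroup of $G_0$, \zdense in its closure inside $C$. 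In either case we replace $(G_0, A_0)$ by a nonorthogonal pair in a strictly smaller algebraic group, and the process terminates. To then arrange connectedness (item (3)), replace $A_0$ by its identity component $A_0^\circ$ and $G_0$ by $G_0 \cap A_0^\circ(K)$: the latter has index at most $[A_0 : A_0^\circ] < \infty$ in $G_0$, hence is a finite-index nonorthogonal subgroup of $G_0$ which is still \zdense in $A_0^\circ$. Item (4) is then automatic, since any infinite proper algebraic subgroup of a simple connected algebraic group would contradict simplicity.

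The hard part will be item (1), the embedding of the abstract one-based group $G_0$ into a commutative algebraic group; fortunately this is precisely what \cite{hrumamu} provides. The three subsequent reductions are elementary, relying only on $U_\sigma(G_0) = 1$ together with the basic facts that closed subgroups, identity components, and quotients of commutative algebraic groups are again commutative algebraic groups, and that nonorthogonality in $L_\sigma$ is preserved under taking finite-index subgroups and finite-kernel images.
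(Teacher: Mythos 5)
Your proof is correct and follows essentially the same route as the paper's own argument: cite \cite{hrumamu} for the embedding into a commutative algebraic group, then use $L_\sigma$-minimality and the Lascar inequalities to descend to a simple, connected $A_0$ via the same case split on the finiteness of $H = G_0 \cap C$ versus $G_0/H$. Your explicit induction on $\dim A_0$ and the direct passage to $A_0^\circ$ are just slightly more explicit bookkeeping than the paper's terse ``without loss of generality'' and ``same-dimensional, lower-degree'' phrasing, but the content is identical.
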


\begin{fact} \label{whoisb0}
\textbf{Whenever an infinite $L_\sigma$-definable subgroup $G_0$ of a simple algebraic group $A_0$ has finite $L_\sigma$-Lascar rank, it is encoded by algebraic groups $B_0$, $A$, and $B$ as follows.}
 Let $$G_0^{[x]} := \{ (g, \sigma(g) \sigma^2(g), \ldots \sigma^{x}(g)) : g \in G_0 \} \subset
  A_0 \times A_0^\sigma \times \ldots \times A_0^{\sigma^x}\mbox{, and let}$$
 $$ X := \{ x \in \mathbb{N} :  G_0^{[x]}\mbox{ is not Zariski-dense in }A_0 \times A_0^\sigma \times \ldots \times A_0^{\sigma^{x}}.$$
 Since $G_0$ has finite $L_\sigma$ rank, $X$ is non-empty. Since $G_0$ is infinite, it must be Zariski-dense in $A_0$, so $0 \notin X$. Let $m$ be the least integer in $X$; and let $B_0$ be the Zariski closure of $G_0^{[m]}$, an algebraic subgroup of $A_0 \times A_0^\sigma \times \ldots \times A_0^{\sigma^m}$.

\label{whoareABG}
  Let $A := A_0 \times A_0^\sigma \times \ldots \times A_0^{\sigma^{m-1}}$;
  let $$B = \{ (b_0, b_1, \ldots b_{m-1}; b_1, b_2, \ldots b_m) : (b_0, b_1, \ldots b_m) \in B_0\}$$ be the subgroup of $A \times A^\sigma$ naturally obtained from $B_0$ by repeating all but the first and last coordinates;
  and let $G := (A, B)^{sh}$. All assumptions required for the notation $(A, B)^\sharp$ follow from the choice of $m$, except that $B$ might not be irreducible. In any case, $G_0^{[m-1]}$, a definably isomorphic copy of $G_0$, and $G$ have the same connected component, so the generic types of $G$ and $G_0$ have the same Lascar rank in any expansion, so it suffices to work with $G$ instead of $G_0$.
\end{fact}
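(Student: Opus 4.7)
The statement packages four claims: (a) $X$ is non-empty, so $m$ is defined; (b) $0 \notin X$, so $m \geq 1$; (c) the resulting $A$, $B$, $B_0$ really do satisfy the hypotheses required for $(A,B)^{sh}$, namely $B \subset A \times A^\sigma$ with both projections dominant and finite; and (d) $G := (A,B)^{sh}$ shares a connected component with a definable copy of $G_0$, so that $G_0$ and $G$ have generic types of equal Lascar rank in every expansion of $L_\sigma$.

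For (a), the plan is to upgrade finiteness of the $L_\sigma$-Lascar rank of the generic type $p$ of $G_0$ to finiteness of the $\sigma$-degree. By the summary of ACFA in the excerpt, every type of finite Lascar rank concentrates on some $(A',B')^\sharp$, so $\deg_\sigma(p) = d < \infty$. For a generic $g \models p$ over the defining parameters $E_0$ of $G_0$, the Zariski closure of $G_0^{[x]}$ has dimension $\operatorname{tr.deg.}(E_0(g,\sigma(g),\ldots,\sigma^x(g))/E_0) \leq d$, which is strictly less than $(x+1)\dim(A_0)$ once $x$ is large, giving $x \in X$. For (b), the Zariski closure of the subgroup $G_0 \subset A_0$ is an algebraic subgroup of $A_0$, infinite because $G_0$ is, hence all of $A_0$ by simplicity of $A_0$; this is exactly $0 \notin X$.

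For (c), $G_0^{[m]}$ is the image of $G_0$ under the group homomorphism $g \mapsto (g,\sigma(g),\ldots,\sigma^m(g))$, so its Zariski closure $B_0$ is automatically an algebraic subgroup of $A_0 \times A_0^\sigma \times \cdots \times A_0^{\sigma^m}$; the set $B$ is the image of $B_0$ under a homomorphism to $A \times A^\sigma$ and is likewise an algebraic subgroup. Minimality of $m$ makes $G_0^{[m-1]}$ Zariski-dense in $A$, which forces the projection $B \to A$ to be dominant; applying $\sigma$ gives dominance of $B \to A^\sigma$. A dimension count closes the loop: dominance gives $\dim(B_0) \geq \dim(A) = m\dim(A_0)$, while $m \in X$ gives $\dim(B_0) < (m+1)\dim(A_0)$, forcing $\dim(B_0) = \dim(A)$, so that both projections have finite generic fibers. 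Irreducibility of $B$ may genuinely fail, which is exactly why the weaker $(A,B)^{sh}$ notation is in use.

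For (d), the map $g \mapsto (g,\sigma(g),\ldots,\sigma^{m-1}(g))$ realizes $G_0^{[m-1]}$ as a $\sigma$-definable copy of $G_0$, and by the very construction of $B$ this copy sits inside $G = (A,B)^{sh}$. Both $G_0^{[m-1]}$ and $G$ are $\sigma$-definable subgroups of $A$, Zariski-dense in $A$, and differ only in points that witness the finitely many irreducible components of $B_0$ not containing the identity. This identifies $G_0^{[m-1]}$ as a subgroup of $G$ of finite index up to finite kernel, after which Lemma \ref{nonorthinv} transfers the equality of Lascar ranks to every expansion of $L_\sigma$. The step I expect to take the most care is exactly this last one: making precise how the irreducible components of $B_0$ control the cosets of the connected component of $G$, and verifying that $G_0^{[m-1]}$ meets that connected component in a finite-index subgroup, rather than accidentally missing components that $G$ picks up from the full $(A,B)^{sh}$ construction.
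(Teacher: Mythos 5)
The paper itself does not prove Fact \ref{whoisb0}; it is explicitly cited from \cite{hrumamu}, and the text around the statement is only a sketch ``to introduce the notation and the intuitions,'' so there is no internal proof to compare against, only the inline justifications in the Fact. Your reconstruction of those justifications is in the right spirit for (a), (b), and the dominance half of (c), but two steps as written do not go through.

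In (c), the dimension count does not by itself force $\dim(B_0) = \dim(A)$. You argue from $m\dim(A_0) \leq \dim(B_0) < (m+1)\dim(A_0)$ that $\dim(B_0) = m\dim(A_0)$, but that implication only holds when $\dim(A_0) = 1$; if $A_0$ is, say, an abelian surface, $\dim(B_0)$ could equal $m\dim(A_0)+1$. What your inequalities actually show is that the kernel $K$ of the projection $B_0 \to A$ — a subgroup of $A_0^{\sigma^m}$ — has dimension strictly less than $\dim(A_0)$, hence is a \emph{proper} algebraic subgroup of $A_0^{\sigma^m}$. The missing step is to invoke the simplicity and connectedness of $A_0$ guaranteed by Fact \ref{whoisa0}: a proper algebraic subgroup of a connected simple $A_0^{\sigma^m}$ is finite, so $K$ is finite and the projection has finite fibers. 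The same point applies symmetrically to the other projection. Simplicity of $A_0$ is the whole reason the previous Fact arranges for it, and your argument never uses it at this step.

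In (d), the description ``differ only in points that witness the finitely many irreducible components of $B_0$ not containing the identity'' does not hold up: $G_0^{[m]}$ is Zariski-dense in all of $B_0$, so its points land in every irreducible component, and $G_0^{[m-1]}$ is not obtained from $G$ by discarding cosets of the connected component of $B_0$. What you actually need is that the $\sigma$-definable subgroup $G_0^{[m-1]} \leq G$ has finite index. Since both are Zariski-dense in $A$ and $B_0$ is by construction also the Zariski closure of the $[m]$-prolongation of (the first coordinate of) $G$, the generic type of $G_0$ over $\acl_\sigma(E)$ is also generic in $G$, which is what forces finite index; the handoff to Lemma \ref{nonorthinv} is then fine. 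As written, though, your bridge from ``same Zariski data'' to ``same connected component'' is asserted rather than argued, and the specific mechanism you propose (irreducible components of $B_0$) is not the right one.
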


 \textbf{Side notes.} \begin{itemize}
 \item We do not use our spiffy $A_0^{\times +}$ notation here because we reserve it for $\tau$-prolongations rather than $\sigma$-prolongations.
 \item The last step, passing from $G_0$ to $G$, might undo some of the reductions from the first step, where we passed to finite-index subgroups of $G_0$. This is fine: the only goal of the reductions in the first step was to obtain the simple algebraic $A_0$.
 \item Similarly, unlike $A_0$, the new group $A$ is not a simple algebraic group; but it is a product of simple algebraic groups $A_0^{\sigma^k}$, which is what we exploit later.
 \end{itemize}

\vspace{.5cm}

\subsection{Some cases of Conjecture \ref{grouconj} follow from degree computations.} \label{gpdegsec}

\begin{deff}
For irreducible algebraic groups $A$ and $B$,
an \emph{algebraic group correspondence from $A$ to $B$} is a (possibly reducible) subgroup $C \leq A \times B$ such that the projections $\pi: C \rightarrow A$ and $\rho: C \rightarrow B$ are finite dominant morphisms.

The \emph{degree ratio} of such an algebraic group correspondence is $\frac{\deg(\rho)}{\deg(\pi)}$.
\end{deff}

For example, if $C$ is the graph of an algebraic group homomorphism $f$ from $A$ to $B$, then this degree ratio is the degree of $f$.

\begin{lem}
Consider algebraic group correspondences $G \leq A \times B$ and $H \leq C \times D$ with degree ratios $r$ and $s$.
The degree ratio of the product $(G \times H) \leq (A \times C) \times (B \times D)$ is $rs$.
If $B=C$, the degree ratio of the composition (fiber product over $B$) $G \circ H \leq A \times D$ is also $rs$.
\end{lem}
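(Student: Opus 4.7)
The plan is to compute the degrees of both projections in each case directly, invoking only multiplicativity of degree under composition of finite morphisms and invariance of degree under base change.

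For the product, the projections $G \times H \to A \times C$ and $G \times H \to B \times D$ are just $\pi_G \times \pi_H$ and $\rho_G \times \rho_H$, where $\pi_G, \rho_G$ (respectively $\pi_H, \rho_H$) are the two projections from $G$ (respectively from $H$). Since $\deg(f \times g) = \deg(f)\deg(g)$ for finite dominant morphisms (read off from a generic fiber), the degree ratio of $G \times H$ is $\deg(\rho_G)\deg(\rho_H)/(\deg(\pi_G)\deg(\pi_H)) = rs$.

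For the composition (assuming $B = C$), realize $G \circ H$ as the image in $A \times D$ of the fiber product $G \times_B H$, formed using $\rho_G \colon G \to B$ and $\pi_H \colon H \to B$, under the map $(g,h) \mapsto (\pi_G(g), \rho_H(h))$; let $e$ denote the degree of this map onto its image. The projection $G \times_B H \to G$ is the base change of the finite map $\pi_H$ along $\rho_G$, hence has degree $\deg(\pi_H)$; composing with $\pi_G \colon G \to A$ gives a finite map $G \times_B H \to A$ of degree $\deg(\pi_G)\deg(\pi_H)$, so that $\deg(G \circ H \to A) = \deg(\pi_G)\deg(\pi_H)/e$. The symmetric analysis on the $D$ side gives $\deg(G \circ H \to D) = \deg(\rho_G)\deg(\rho_H)/e$, and the factor $e$ cancels in the ratio, leaving exactly $rs$.

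Two small checks round out the argument. First, one should verify that $G \circ H$ really is an algebraic group correspondence: the subgroup structure descends from $G \times_B H \leq G \times H$, finiteness of its two projections follows because their fibers are quotients of the finite fibers of $G \times_B H \to A$ and $G \times_B H \to D$, and dominance is inherited from $\pi_G, \rho_G, \pi_H, \rho_H$ by a straightforward diagram chase. Second, the definition permits $G$ and $H$ to be reducible, so throughout ``degree'' should be read as the cardinality of a generic fiber; this is the main technical subtlety, but it causes no trouble because this notion of degree is additive over dominant components, multiplicative in towers, and invariant under base change, so the computation above runs componentwise. I expect this bookkeeping about reducible components, rather than any conceptual issue, to be the only real obstacle to writing out a clean proof.
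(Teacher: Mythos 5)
Your argument is correct, and it fills in the standard degree bookkeeping that the paper dismisses with a one-word proof (``Immediate''). The product case is exactly what one would write; for the composition you correctly read $G \circ H$ as the image of the fiber product $G \times_B H$ in $A \times D$ (this is forced by the paper's assertion $G \circ H \leq A \times D$, and by the later use in the proof of the lemma on group degree), and the factor $e = \deg(G \times_B H \to G\circ H)$ cleanly cancels. The one point worth saying a little more carefully than you do is the reducibility issue: your slogans ``multiplicative in towers'' and ``invariant under base change'' are only generically true and can fail over non-generic components, but here the source and target are algebraic groups and the maps are homomorphisms, so all fibers of $G\times_B H \to G\circ H$ are cosets of a fixed finite kernel and thus equinumerous, and all components of a group correspondence have full dimension. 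You gesture at this (``the subgroup structure descends''), and it is genuinely the group structure, not mere genericity, that makes the constant $e$ well-defined across components; making that explicit would tighten the argument. Since the paper offers no proof to compare against, there is nothing to say about a difference in approach.
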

\begin{proof} Immediate. \end{proof}

\begin{lem}
If $C$ is an algebraic group correspondence from $A$ to $B$, then its connected component $C_0$ is another algebraic group correspondence from $A$ to $B$ with the same degree ratio.
\end{lem}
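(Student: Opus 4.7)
The plan is to check the three conditions in the definition of algebraic group correspondence for $C_0$, then show that both $\deg(\pi)$ and $\deg(\rho)$ scale by the same factor (the number of connected components of $C$) when restricted from $C$ to $C_0$.

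First, $C_0$ is a closed, connected, normal subgroup of $C$, and in particular is irreducible, so it has a chance to be an algebraic group correspondence from $A$ to $B$. For the surjectivity of the restricted projections, I would use that $\pi: C \to A$ is a homomorphism of algebraic groups (the restriction of the projection $A \times B \to A$ to the subgroup $C$), and that the image of an algebraic group homomorphism is closed. Since $\pi$ is dominant, its closed image must be all of $A$; then $\pi(C_0)$ is a closed connected subgroup of finite index in $A$, and since $A$ is irreducible (hence connected), $\pi(C_0) = A$. The same argument gives $\rho(C_0) = B$. Finiteness of $\pi|_{C_0}$ and $\rho|_{C_0}$ is automatic since they are restrictions of finite morphisms to the closed subvariety $C_0 \subseteq C$.

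For the degree ratio, the key observation is that the irreducible components of $C$ are precisely the cosets $(a_i, b_i) \cdot C_0$ for a system of representatives $(a_i, b_i) \in C$, $i = 1, \ldots, k$. Translation by $(a_i, b_i)$ on $A \times B$ gives an isomorphism $C_0 \xrightarrow{\sim} C_i$ which intertwines $\pi|_{C_i}$ with the composition of $\pi|_{C_0}$ and left translation by $a_i$ on $A$. Since left translation on $A$ is an automorphism, $\deg(\pi|_{C_i}) = \deg(\pi|_{C_0})$ for every $i$. Adding the contributions over components (using that $\pi_*\mathcal{O}_C = \bigoplus_i (\pi|_{C_i})_* \mathcal{O}_{C_i}$ as $\mathcal{O}_A$-modules), we get $\deg(\pi) = k \cdot \deg(\pi|_{C_0})$, and similarly $\deg(\rho) = k \cdot \deg(\rho|_{C_0})$. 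The factor $k$ cancels in the ratio.

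The only nontrivial step is the identification of $\pi(C_0)$ with $A$, which rests on the facts that images of algebraic group homomorphisms are closed and that $A$ is irreducible; everything else is bookkeeping about cosets and restrictions of finite morphisms to closed subvarieties.
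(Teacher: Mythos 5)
Your proposal is correct and follows essentially the same approach as the paper: verify that the restricted projections from $C_0$ are finite and dominant, then observe that the remaining components of $C$ are cosets of $C_0$ on which the projections have the same degrees as on $C_0$, so $\deg(\pi)$ and $\deg(\rho)$ both scale by the number $k$ of components and the ratio is unchanged. The one small variation is in verifying dominance of the restrictions: you invoke closedness of images of algebraic group homomorphisms together with finite index and connectedness, whereas the paper gets the same conclusion from a dimension count ($C_0$ has the same dimension as $C$, hence its images have full dimension in the irreducible targets $A$ and $B$).
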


\begin{proof}
Let $\pi_0$ and $\rho_0$ be the restrictions of $\pi: C \rightarrow A$ and $\rho: C \rightarrow B$ to $C_0$.

Since $C_0$ has the same dimension as $C$, the morphisms $\pi_0$ and $\rho_0$ are finite.
Since $\pi$ and $\rho$ are finite dominant, $A$, $B$ and $C$ (and, therefore, $C_0$) all have the same dimension.
Thus, the images of $\pi_0$ and $\rho_0$ have the same dimension as $A$ and $B$, so $\pi_0$ and $\rho_0$ are dominant, because $A$ and $B$ are irreducible.

To compare degree ratios, let $e$ be the index of $C_0$ in $C$; now the rest of the irreducible components of $C$ are the $(e-1)$ cosets of $C_0$. It follows that the degrees of the restrictions of $\pi$ and $\rho$ to any other irreducible component of $C$ are equal to the degrees of $\pi_0$ and $\rho_0$, respectively. Adding up, we get $e \deg(\pi_0) = \deg(\pi)$ and $e \deg(\rho_0) = \deg(\rho)$.
\end{proof}

\begin{lem} \label{groupdegreelem}
With Notation and Assumptions \ref{notasigtau}, consider a $\tau$-degree $m \lneq n$ subgroup of $(A,B)^{\sigma \sharp}$ defined by $\tilde{a} \in (\tilde{A}, C)^{\tau \sharp}$ where
$\tilde{a} := (a, \tau(a), \ldots \tau^{m-1}(a))$ and $\tilde{A} := A \times \tau(A) \times \ldots \times \tau^{m-1}(A)$.
If $x$ is the degree ratio of $B \leq A \times A^\sigma$ and $y$ is the degree ratio of $C \leq \tilde{A} \times \tau(\tilde{A})$, then $x^m = y^n$.
\end{lem}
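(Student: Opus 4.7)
The plan is to build two algebraic group correspondences from $\tilde{A}$ to $\sigma(\tilde{A})$ that share a common top-dimensional component whose degree ratio is forced, by the two constructions, to equal $x^m$ on the one hand and $y^n$ on the other.

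First I would assemble the correspondence $\tilde{B} := B \times B^\tau \times \cdots \times B^{\tau^{m-1}}$, reindexing coordinates so that it sits inside $\tilde{A} \times \sigma(\tilde{A})$ rather than inside the unshuffled product. Since each $B^{\tau^i}$ has the same degree ratio $x$ as $B$ (the field isomorphism $\tau^i$ preserves degrees of morphisms), the preceding product lemma applied $m$ times yields a degree ratio of $x^m$. In parallel, I would form $C^{(n)} := C \circ C^\tau \circ \cdots \circ C^{\tau^{n-1}}$, the $n$-fold composition of the $\tau^i$-twists of $C$, viewed as a subgroup of $\tilde{A} \times \tau^n(\tilde{A}) = \tilde{A} \times \sigma(\tilde{A})$; the composition lemma, applied $n-1$ times, gives its degree ratio as $y^n$.

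Next I would check that the point $(\tilde{a}, \sigma\tilde{a})$ lies in both. For $\tilde{B}$: applying $\tau^i$ to $(a,\sigma a) \in B$ and using the commutation $\sigma \tau^i = \tau^i \sigma$ gives $(\tau^i a, \sigma \tau^i a) \in B^{\tau^i}$ for $0 \le i < m$, and collecting these produces $(\tilde{a}, \sigma \tilde{a}) \in \tilde{B}$ after the same coordinate reshuffle. For $C^{(n)}$: applying $\tau^i$ to $(\tilde{a}, \tau\tilde{a}) \in C$ gives $(\tau^i \tilde{a}, \tau^{i+1}\tilde{a}) \in C^{\tau^i}$ for $0 \le i < n$, and these chain through the iterated fiber product to produce the tuple $(\tilde{a}, \tau\tilde{a}, \ldots, \tau^n\tilde{a})$, which projects to $(\tilde{a}, \tau^n\tilde{a}) = (\tilde{a}, \sigma\tilde{a})$.

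Finally I would identify the algebraic locus $D$ of $(\tilde{a}, \sigma\tilde{a})$ over $E$ as a common irreducible component of $\tilde{B}$ and $C^{(n)}$. Since $\sigma a \in (Ea)^{alg}$ (built into the $(A,B)^{\sigma\sharp}$ notation), $\sigma\tilde{a}$ is algebraic over $\tilde{a}$, so $\dim D = \operatorname{tr.deg.} E(\tilde{a})/E = \dim \tilde{A}$, using that $\tilde{A}$ is by convention the algebraic locus of $\tilde{a}$. Since $\tilde{B}$ and $C^{(n)}$ are correspondences from $\tilde{A}$ to $\sigma\tilde{A}$, both have dimension $\dim \tilde{A}$, so the irreducible $D$ is a top-dimensional subvariety of each and therefore a component of each. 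The lemma on connected components already asserts that every irreducible component of an algebraic group correspondence has the same degree ratio as the full group, so the degree ratio of $D$ equals $x^m$ and also equals $y^n$, yielding $x^m = y^n$.

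The step I expect to demand the most care is the degree-ratio computation for $C^{(n)}$: one must verify that the iterated fiber product of the $C^{\tau^i}$ remains equidimensional with dominant projections at each stage, so that the composition lemma applies $n-1$ times without degenerating (the image in $\tilde{A} \times \sigma\tilde{A}$ could in principle have strictly smaller degree than the iterated fiber product, but the lemma as stated asserts the ratio is still $rs$ and the same reasoning extends). The secondary bookkeeping hazard is keeping the shuffling of coordinates in $\tilde{B}$ consistent with the product decomposition of $\tilde{A} \times \sigma(\tilde{A})$; once that is straightened out, everything reduces to applying the two product/composition lemmas and the component lemma that have already been proved.
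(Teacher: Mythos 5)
Your proof is correct and follows essentially the same route as the paper: form the $m$-fold product $B \times \tau(B) \times \cdots \times \tau^{m-1}(B)$ with degree ratio $x^m$, form the $n$-fold composition $\tau^{n-1}(C) \circ \cdots \circ \tau(C) \circ C$ with degree ratio $y^n$, observe both live in $\tilde{A} \times \sigma(\tilde{A})$, and invoke the component-lemma to equate degree ratios. The paper's (one-sentence) proof simply asserts that the connected components of the two correspondences must be equal, whereas you supply the justification the paper leaves implicit — tracing $(\tilde{a}, \sigma\tilde{a})$ through both constructions and showing its locus $D$ is a top-dimensional, hence shared, irreducible component — so yours is the same argument with the missing bookkeeping filled in.
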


\begin{proof}
The connected components $B_0$ of $B \times \tau(B) \times \ldots \times \tau^{m-1}(B)$ and $C_0$ of
$\tau^{n-1}(C) \circ \ldots \circ \tau(C) \circ C$ are both subgroups of $\tilde{A} \times \sigma(\tilde{A})$ and must be equal. The degree ratio of $B_0$ is $x^m$ and the degree ratio of $C_0$ is $y^n$.
\end{proof}

\begin{prop} \label{grdeghappy}
With Notation and Assumptions \ref{notasigtau}, let $A$ and $B$ be algebraic groups such that $(A,B)^{\sigma \sharp}$ is a $\sigma$-minimal, $\sigma$-one-based group.
If the degree ratio $x_0$ of $B \leq A \times A^\sigma$ is not $1$, then the $L_{\mathbb{Q}}$ rank of $(A,B)^{\sigma \sharp}$ is finite.

More precisely, the $L_{\mathbb{Q}}$ rank of $(A,B)^{\sigma \sharp}$ is at most $S$, the greatest integer for which $\sqrt[S]{x_0}$ is rational.
\end{prop}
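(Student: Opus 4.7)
The plan is to invoke Corollary \ref{rankcomputesheep} with the sufficient directed collection $\{L_{q/n} \sthat n \in \nn^{>0}\}$ of subsignatures of $L_\qq$, reducing the question to showing that for each $n \in \nn^{>0}$ the $L_{q/n}$-Lascar rank of the generic type of $G := (A,B)^{\sigma\sharp}$ is at most $S$. So fix $n$ and work in the reduct $L_{q/n}$-ACFA with automorphism $\tau := \sigma_{q/n}$.

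I would first check that $G$ is also $\tau$-one-based; this should follow because $G$ remains Zariski-dense in the same simple algebraic group $A_0$ of Fact \ref{whoisa0}, whose algebraic structure is unchanged, so any $\tau$-definable subset of $G$ is still a Boolean combination of cosets of $\tau$-definable subgroups. Then the $\tau$-Lascar rank of the generic type equals the supremum of the lengths of chains $G = G_0 > G_1 > \cdots > G_r$ of $\tau$-definable subgroups with each successive quotient infinite, and each such chain has strictly decreasing $\tau$-degrees $n = m_0 > m_1 > \cdots > m_r \geq 0$, since infinite-index inclusion strictly drops the $\tau$-transcendence degree of the generic element.

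For each $G_i$ with $i \geq 1$ and $m_i \geq 1$, Lemma \ref{groupdegreelem} applies and yields $x_0^{m_i} = y_i^n$ for some positive rational $y_i$; writing $m_i/n = p_i/q_i$ in lowest terms, this forces $\sqrt[q_i]{x_0}$ to be rational. The set $\mathcal{D} := \{d \in \nn^{>0} \sthat \sqrt[d]{x_0} \text{ is rational}\}$ is trivially closed under divisors and also closed under least common multiples by a B\'ezout argument: if $x_0 = a^d = b^{d'}$ and integers $u, v$ satisfy $uL/d + vL/d' = 1$ for $L := \operatorname{lcm}(d, d')$, then $c := a^u b^v$ satisfies $c^L = x_0$. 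Since $x_0 \neq 1$ forces $\mathcal{D}$ to be finite, $\mathcal{D}$ consists exactly of the divisors of $S$.

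Finally, a counting argument: for each divisor $q > 1$ of $\gcd(n, S)$ there are exactly $\phi(q)$ values of $m \in \{1, \ldots, n-1\}$ with $n/\gcd(m, n) = q$, so the total number of valid $m$'s in $\{1, \ldots, n-1\}$ is $\sum_{q \mid \gcd(n,S),\, q > 1} \phi(q) = \gcd(n, S) - 1$. Since $m_1, \ldots, m_r$ are distinct with at most one of them equal to $0$, we obtain $r \leq \gcd(n, S) \leq S$, uniformly in $n$, completing the argument via Corollary \ref{rankcomputesheep}. The main obstacle is justifying the first step of this outline, namely that $\sigma$-one-basedness of $G$ survives the expansion to $\tau$-ACFA so that Lascar rank is read off from the subgroup lattice; everything after that is degree-ratio bookkeeping and elementary number theory.
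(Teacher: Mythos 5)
Your outline tracks the paper's proof closely: reduce to bounding the $\tau$-rank uniformly in $n$ via Corollary \ref{rankcomputesheep}; use $\tau$-one-basedness to read the $\tau$-rank off a chain of $\tau$-definable subgroups of strictly decreasing $\tau$-degree; feed each link of the chain into Lemma \ref{groupdegreelem}; and finish with elementary number theory about rational roots of $x_0$. Your number-theoretic finale (closure of $\{d : \sqrt[d]{x_0} \in \qq\}$ under divisors and least common multiples, then an Euler-function count) is a valid and slightly more explicit variant of the paper's shorter argument, which simply observes that for $\gcd(r,s)=1$, $x_0^{r/s}$ is rational iff $x_0^{1/s}$ is, and that this bounds $s$ because $1$ is the only divisible element of the multiplicative group of positive rationals.

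The one genuine gap is the one you flag yourself: the claim that $G$ remains $\tau$-one-based. Your proposed justification --- that $A_0$ and its algebraic structure are unchanged, hence $\tau$-definable subsets of $G$ are still Boolean combinations of cosets of $\tau$-definable subgroups --- does not hold up. One-basedness is a model-theoretic property of the full lattice of definable sets in the richer signature, and passing from $L_\sigma$ to $L_\tau$ genuinely creates new definable subsets and new subgroups of $G$ (this is exactly what makes the proposition nontrivial); the algebro-geometric picture of $A_0$ in itself says nothing about those. The tool the paper uses at this point, and the one you should cite, is Proposition \ref{geostabprop}: one-basedness of a partial type passes up from a cofinal family of reducts to the full signature, hence from $L_\sigma$ to every $L_\tau$ and indeed to $L_\qq$. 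With that substitution, the rest of your argument goes through.
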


\begin{proof}
By Corollary \ref{rankcomputesheep}, it suffices to obtain a bound on the $\tau$-rank of $(A,B)^{\sigma \sharp}$ that is independent of $n$.

By Proposition \ref{geostabprop}, $(A,B)^{\sigma \sharp}$ is $\tau$-one-based for all $\tau$. Thus, a $\tau$-forking chain of length $\ell$ will be witnessed by a chain of subgroups $G_\ell \leq G_{\ell -1} \leq \ldots \leq G_1 \leq G_0 = (A,B)^{\sigma \sharp}$. Since $(A, B)^{\sigma \sharp}$ is $\sigma$-minimal, all of these subgroups must be Zariski-dense in $A$.
Analysing each $G_i \leq A$ as in Fact \ref{whoisb0} shows that the pair $G_{i} \leq G_0$ satisfies the hypotheses of Lemma \ref{groupdegreelem} for each $i$.

Let $m_i$ be the $\tau$-degree of $G_i$, and let $x_i$ be the degree ratio of the algebraic group correspondence defining $G_i$. Then $x_0^{m_i} = x_{i}^{n}$. Since this chain of subgroups witnesses forking, the $\tau$-degree of $G_{i+1}$ is strictly lower than the $\tau$-degree of $G_i$. So we have integers $n=m_0 \gneq m_1 \gneq \ldots \gneq m_\ell \geq 1$ and rational numbers $x_i = x_0^{\frac{m_i}{n}}$.

Now for a positive rational number $x_0 \neq 1$, there are only finitely many rational numbers $\frac{r}{s}$ between $0$ and $1$ such that $x_0^{\frac{r}{s}}$ is rational: assuming that $r$ and $s$ are relatively prime, $x_0^{\frac{r}{s}}$ is rational if and only if $x_0^{\frac{1}{s}}$ is rational. Since $1$ is the only divisible element of the multiplicative group of positive rationals, this bounds $s$. Since $\frac{r}{s} < 1$, $s$ bounds $r$.

\end{proof}

This proves Conjecture \ref{grouconj} for $\sigma$-degree $1$ subgroups of the multiplicative group $\gm$, because irreducible subgroups of $\gm \times \gm$ are all of the form $x^m = y^n$ for relatively prime integers $m$ and $n$, whose degree ratio is not $1$ unless both $m$ and $n$ are $\pm 1$, in which case the group $\sigma(x) =  x^{\pm 1}$ is not one-based. For $\sigma$-degree $1$ subgroups of elliptic curves, this no longer suffices: the two projections from $B$ to $A$ and $A^\sigma$ may have the same degree without being the same map. Nor does this work for higher $\sigma$-degree subgroups of $\gm$: for example, the group correspondence from $\gm^2$ to itself encoding (as in Fact \ref{whoisb0}) the group defined by $\sigma^2(x) = \frac{\sigma(x)^4}{x}$ is a bijection, but the group is one-based. We return to summarizing results from \cite{hrumamu} to address these issues.

\subsection{Linear algebra with quasihomomorphisms.} \label{quaends}

Quasiendomorphisms are a standard tool for describing one-based groups. In general, a \emph{quasihomomorphism} from a group $X$ to a group $Y$ is a subgroup of $X \times Y$ for which the projection to $X$ is surjective and has finite fibers (equivalently, finite kernel $K$). If the kernel is trivial, this subgroup is the graph of an actual homomorphism. In any case, one may treat this subgroup as a ``finitely-valued function'' that, for a given input in $X$, returns several outputs from $Y$ instead of one; these outputs form a coset of $K$. Composition of quasihomomorphisms produces quasihomomorphisms, as fiber products of subgroups are subgroups.
For example, $B$ above is a quasiendomorhpism from $A$ to $A^\sigma$; we now describe it in terms of quasihomomorphisms among $\sigma$-transforms of $A_0$.

\begin{fact} \label{bijs}
 Let $e_0$ be the group identity of $A_0$.
 For $1 \leq i,j \leq m$, let $B_{ij}$ be the subgroup of $A_0^{\sigma^{i-1}} \times A_0^{\sigma^{j}}$ obtained by intersecting $B$ with
 $$(C_1 \times C_2 \times \ldots \times C_{m}) \times (D_1 \times \ldots \times D_m)\mbox{ where}$$
 $$C_i = A_0^{\sigma^{i-1}}, \,\,\, D_j = A_0^{\sigma^j}
    \mbox{, and } C_{k+1} = D_k = \{\sigma^k(e_0)\}\mbox{ for all other }k.$$
 Notation abuse alert: the definitions of $C_k$, $D_k$ depend on $i$ and $j$!!
 \begin{itemize}
 \item Each $B_{ij}$ is the quasihomomorphism from $A_0^{\sigma^{i-1}}$ to $A_0^{\sigma^{j}}$ obtained by composing
 three quasihomomorphisms: the injection of $C_i = A_0^{\sigma^{i-1}}$ into the $i$th coordinate of $A$; the quasiendomorphism $B$ from $A$ to $A^\sigma$; and the projection from $A^\sigma$ to its $j$th coordinate $D_j = A_0^{\sigma^j}$. Unlike the whole $B$, some of these $B_{ij}$ might be zero quasihomomorphisms, just as an invertible matrix might have zero entries.

 \item Morally, the original $B$ can be reconstituted from the matrix $\{B_{ij}\}$ as in the usual linear algebra, via the equation
 $$\hat{B} (x_1, x_2, \ldots x_m) = ( \sum_i B_{1i} x_i, \sum_i B_{2i} x_i, \ldots \sum_i B_{mi} x_i ),$$
 where $B_{ij} x_j$ means the quasihomomorphism action of $B_{ij}$ on the element $x_j$, and sums are in the sense of the group law of the appropriate $\sigma$-transform of $A_0$.
 More precisely, $\hat{B}$ is the appropriate fiber product of the $B_{ij}$; when $B_{ij}$ are not single-valued, $\hat{B}$ might not be equal to $B$. However, they always have the same connected component, so that the groups
 $G = (A, B)^{sh}$ and $(A, \hat{B})^{sh}$ always share a finite-index subgroup, and so, in particular, they have the same rank. Thus, we may and do work with $\hat{B}$ instead of $B$, which is to say that we work with the matrix $\{B_{ij}\}$.

 \item It follows immediately from the definition of $B$ that above the main diagonal, $B_{i, i+1}$ is the identity automorphism of $A_0^{\sigma^i}$; and that elsewhere except for the last row, $B_{i, j} = 0$ for any $i \neq m$ and any $j \neq i+1$. That is, $\{B_{ij}\}$ has the shape of a \emph{companion matrix}.

 \item The remaining entries $B_{mi}$ (in the last row of the matrix) are quasihomomorphisms from $A_0^{\sigma^{i-1}}$ to $A_0^{\sigma^m}$. If these two algebraic groups $A_0^{\sigma^{i-1}}$ and $A_0^{\sigma^m}$ are not isogenous, $B_{mi}$ must be the zero quasihomomorphism. In particular, $A_0$ and $A_0^{\sigma^m}$ must be isogenous, since otherwise $B_{i1} = 0$ for all $i$ and the matrix of $B_{ij}$s does not have full rank, which contradicts the choice of $m$ once you unwrap the construction back to that point.
 \end{itemize}
\end{fact}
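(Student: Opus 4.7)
The plan is to treat the four bullets in order; the first three are formal consequences of the definitions and of the explicit formula for $B$, while only the last carries real algebraic content.

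The first bullet is a direct unwinding: the intersection defining $B_{ij}$ forces every input coordinate except the $i$-th and every output coordinate except the $j$-th to the appropriate identity $\sigma^k(e_0)$, so what survives is precisely the graph in $A_0^{\sigma^{i-1}} \times A_0^{\sigma^j}$ of the three-fold composition ``inject into the $i$-th slot, apply $B$, project onto the $j$-th slot.'' The third bullet then falls out of the presentation $B = \{(b_0,\ldots,b_{m-1};\, b_1,\ldots,b_m) : (b_0,\ldots,b_m) \in B_0\}$: for $q < m$, the $q$-th output coordinate equals the $(q+1)$-th input coordinate (both are $b_q \in A_0^{\sigma^q}$), so the intersection of the first bullet picks out the identity of $A_0^{\sigma^q}$ in the appropriate ``sub-diagonal'' slot and the zero quasihomomorphism elsewhere, except for the row indexed by the free output $b_m$ (which is related to the inputs only through $B_0$). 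This is exactly the companion-matrix shape.

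For the second bullet, take $\hat{B}$ to be the fiber product of the $B_{ij}$ described; the inclusion $B \subseteq \hat{B}$ is immediate coordinate-wise from the first bullet. Both $B$ and $\hat{B}$ are algebraic subgroups of $A \times A^\sigma$ projecting onto all of $A$ with the same generic fiber size, hence have the same dimension, and an inclusion of equi-dimensional algebraic groups is an equality on connected components; Lemma \ref{nonorthinv} then lets us replace $B$ by $\hat{B}$ for all Lascar rank computations.

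For the fourth bullet, each $A_0^{\sigma^k}$ remains a simple commutative algebraic group, so any algebraic group correspondence between non-isogenous $A_0^{\sigma^{i-1}}$ and $A_0^{\sigma^m}$ is forced to be the zero quasihomomorphism: its image is an algebraic subgroup of $A_0^{\sigma^m}$, hence finite by simplicity, and a quasihomomorphism with finite image is zero as a quasihomomorphism. For the final assertion, suppose $A_0$ is not isogenous to $A_0^{\sigma^m}$; then every entry in the matrix mapping into or out of the pair $A_0$, $A_0^{\sigma^m}$ must vanish, and combined with the companion shape this forces $B_0$ to split as $A_0 \times B_0'$ for some $B_0' \subseteq A_0^\sigma \times \ldots \times A_0^{\sigma^m}$. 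Projecting $G_0^{[m]}$ onto its last $m$ coordinates yields a $\sigma$-shift of $G_0^{[m-1]}$, which by the minimality of $m$ in the definition of $X$ is Zariski-dense in $A_0^\sigma \times \ldots \times A_0^{\sigma^m}$; hence $B_0' = A_0^\sigma \times \ldots \times A_0^{\sigma^m}$ and $B_0 = A_0 \times A_0^\sigma \times \ldots \times A_0^{\sigma^m}$, contradicting $m \in X$. The main obstacle is precisely this final unraveling --- keeping track of which row or column ``evaporates'' and showing the resulting splitting of $B_0$ violates the very minimality of $m$ that defined it.
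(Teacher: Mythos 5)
There is no in-paper proof to compare against: this is a Fact imported from \cite{hrumamu}, and the paper only gestures at the verification (``It follows immediately from the definition of $B$\ldots''; ``\ldots which contradicts the choice of $m$ once you unwrap the construction back to that point''). Your fourth-bullet argument does that unwrapping correctly---though the phrase ``every entry in the matrix mapping into or out of the pair $A_0$, $A_0^{\sigma^m}$'' overclaims what non-isogeny kills (only the single entry from input $A_0$ to output $A_0^{\sigma^m}$ is forced to vanish; the rest of the last row may survive). What actually matters is that the first \emph{column} vanishes, the companion shape supplying the other zeros, so that $\hat B$ is independent of its first input; the splitting $B_0 = A_0 \times B_0'$ with $B_0'$ the closure of a $\sigma$-shift of $G_0^{[m-1]}$ then contradicts minimality of $m$ in $X$, exactly as you say.

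The gap is in the first two bullets: you treat the intersection defining $B_{ij}$ and the three-fold composition as ``precisely'' the same, and then lean on that to get $B \subseteq \hat B$ ``immediate coordinate-wise.'' They are not the same. The intersection constrains the input slots away from $i$ \emph{and} the output slots away from $j$; the composition constrains only the inputs and then projects. Under the literal intersection reading, the superdiagonal entry is the identity restricted to the subgroup $\{x : (\sigma^0(e_0),\ldots,x,\ldots,\sigma^m(e_0)) \in B_0\}$, which need not be all of $A_0^{\sigma^k}$; $B_{ij}x_j$ can then be too small (or empty) and the inclusion $B \subseteq \hat B$ fails. Under the composition reading the inclusion does hold---for $(x,y)\in B$ lift each $x_j$ to an element of $B$ above $(e_0,\ldots,x_j,\ldots,e_0)$, sum the lifts inside the group $B$ to land over $x$, and absorb the finite offset from $y$ into the common kernel, which sits inside every $\ker B_{ij}$---but you give neither this argument nor a reconciliation of the two readings, so as written the second bullet is not established.
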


Every simple commutative algebraic group $A_0$ is the additive group of the field, or the multiplicative group $\gm$ of the field, or a simple abelian variety. In characteristic zero, the additive group does not support any one-based groups, as its only endomorphisms are linear maps. If $A_0 = \gm$, then all  $B_{ij}$ come from $\mathbb{Q}$, the quasiendomorphism ring of $\gm$, so compositional divisibility of $B$ in the ring of quasiendomorphisms of $\gm^n$ becomes multiplicative divisibility of a matrix in the ring of $n \times n$ matrices over $\mathbb{Q}$. If $A_0$ is a simple abeliean variety defined over the field of absolute constants $F_\cap$, the same thing happens with $\mathbb{Q}$ replaced by the ring $R$ of quasiendomorphisms of $A_0$. For elliptic curves in characteristic zero, $R$ is a number field; otherwise, it may be more complicated. It could happen that the abelian variety $A_0$ is not defined over $F_\cap$, but is nevertheless isogenous to $\sigma_q(A_0)$ for many $q \in \mathbb{Q}$; this could probably happen even if $A_0$ is not fixed by any $\sigma_q$. Of all these interesting and maybe tractable possibilities, we only settle the special case where $A_0$ is the multiplicative group, or an elliptic curve defined over the absolute field of constants $F_\cap$, in characteristic zero.


\begin{notass} \label{gpquestsetup}
For the rest of the paper, we work in characteristic $0$, and\begin{itemize}
 \item $A_0$ is the multiplicative group, or an elliptic curve defined over the absolute field of constants $F_\cap$;
 \item $R$ is the quasiendomorphism ring of $A_0$;
 \item $m \geq 2$ and $M \in \operatorname{GLN}_m(R)$ is the companion matrix of its characteristic polynomial $P(x)$ in $R[x]$: that is, $M_{i,i+1} = 1$ for all $i$, and all other entries except for the last row of $M$ are zero.
 \item $G := \{ g \in A_0 : (\sigma(g), \sigma^2(g), \ldots \sigma^{m}(g)) = M \ast (g, \sigma(g), \ldots \sigma^{m-1}(g)) \},$ \\
     where $\ast$ is the quasiendomorphism action of $\operatorname{GLN}_m(R)$ on $A_0^{\times m}$, the $m$th cartesian power of $A_0$;
 \item Recall from ref{notation} that $\tau^n = \sigma$ and $a^+ = (a, \tau(a), \ldots \tau^{n-1}(a))$.
\end{itemize}
\end{notass}

\begin{fact} \label{dist-conj-eival} \begin{enumerate}
 \item All quasiendomorphisms of $A_0$ are defined over $F_\cap$.
 \item The ring $R$ is a number field.
 \item If $G$ is $L_\sigma$-one-based, then roots of unity are not eigenvalues of $M$.
 \item Subspaces of $R^m$ that are invariant under $M$ correspond to $L_\sigma$-definable subgroups of $G$, up to finite-index subgroups.
 \item In particular, if $G$ is $L_\sigma$-minimal and $L_\sigma$-one-based, then $M$ has no proper nontrivial invariant subspaces defined over $R$, its $m$ distinct eigenvalues in $R^{alg}$ form a conjugacy class over $R$, and the characteristic polynomial $P(x)$ of $M$ is irreducible over $R$.
\end{enumerate} \end{fact}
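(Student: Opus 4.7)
The plan is to dispatch the five items in the order (1), (2), (4), (5), (3): the first two identify the ring $R$, the main technical content is the dictionary in (4), and then (5) is an immediate corollary while (3) is a separate application of the same dictionary.

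For (1) and (2), I would use the observation made after Definition \ref{fcapdef} that $F_\cap$ is an algebraically closed field fixed by every $\sigma_q$. Since we are in characteristic $0$ and $A_0$ is either $\gm$ (defined over the prime field) or an elliptic curve defined over $F_\cap$, every quasiendomorphism of $A_0$ is defined over the algebraic closure of the field of definition of $A_0$, hence over $F_\cap$. The concrete ring $R$ is $\qq$ in the $\gm$ case, and $\qq$ or an imaginary quadratic field in the elliptic case, so in every case it is a number field.

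For (4), I would set up the correspondence explicitly. An $R$-subspace $V\subseteq R^m$ determines an algebraic subgroup $H_V\le A_0^m$ by reading generators of the annihilator $V^\perp\subseteq R^m$ as $R$-linear quasiendomorphism equations on $A_0^m$. The tuple map $g\mapsto (g,\sigma(g),\ldots,\sigma^{m-1}(g))$ identifies $G$ (up to finite kernel and finite-index subgroup) with its image in $A_0^m$, on which $\sigma$ acts exactly as $M$ by definition of $G$. Under this identification, $M$-invariance of $V$ is equivalent to $\sigma$-invariance of $H_V$, so the intersection pulls back to an $L_\sigma$-definable subgroup of $G$. Conversely, given an $L_\sigma$-definable subgroup $G'\le G$, Fact \ref{whoisb0} applied to $G'$ produces an $\sigma$-invariant algebraic subgroup of $A_0^m$; descending through the quasiendomorphism dictionary of Fact \ref{bijs} turns it into an $M$-invariant $R$-subspace. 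The main obstacle I expect is precisely this step: keeping the ``up to finite-index'' slack consistent with the distinction between $B$ and $\hat B$ in Fact \ref{bijs}, and verifying that the $M$-invariant subspaces one needs are $R$-rational rather than $R^{alg}$-rational.

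Part (5) is an immediate consequence of (4): if $G$ is $L_\sigma$-minimal and $L_\sigma$-one-based, then $R^m$ has no proper nontrivial $M$-invariant $R$-subspace. Any proper factor $P_1(x)$ of $P(x)$ over $R$ would yield $\ker P_1(M)$ as such a subspace, so $P(x)$ is irreducible over $R$; since $M$ is a companion matrix, $P(x)$ is also the minimal polynomial of $M$. Irreducibility over a characteristic-zero field forces separability, so $P(x)$ has $m$ distinct roots in $R^{alg}$ forming a single Galois conjugacy class over $R$.

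For (3), suppose for contradiction that some eigenvalue $\zeta$ of $M$ satisfies $\zeta^k=1$. Then $\ker(M^k-I)$ is a nonzero $M$-invariant $R$-subspace of $R^m$; by (4) it corresponds to an infinite $L_\sigma$-definable subgroup $H\le G$. For any $g\in H$, the tuple $(g,\sigma(g),\ldots,\sigma^{m-1}(g))$ is annihilated by $M^k-I$, which, by the definition of $G$, forces $\sigma^k(g)=g$. Hence $H$ sits inside the $A_0$-points of $\fix(\sigma^k)$, so its generic type is nonorthogonal to that fixed field and $H$ is fieldlike. But one-basedness passes from $G$ to its $L_\sigma$-definable subgroups, so $H$ cannot be fieldlike --- contradiction. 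Thus the substantive work all lies in setting up (4); parts (3) and (5) are then two short linear-algebra computations feeding off of it.
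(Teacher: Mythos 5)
Your proof follows essentially the same path as the paper's: identify $R$ via the field of absolute constants and standard facts about endomorphism rings of $\gm$ and elliptic curves; establish the dictionary between $M$-invariant $R$-subspaces and $L_\sigma$-definable subgroups using Facts \ref{whoisb0} and \ref{bijs}; read off (5) as linear algebra about companion matrices; and derive (3) by locating an infinite definable subgroup inside a fixed field, contradicting one-basedness. The only small divergence is that in (3) the paper writes the fixed field as $\fix(\sigma^{m+k})$ while your computation gives $\fix(\sigma^k)$ -- either way the contradiction with one-basedness goes through, and your $\sigma^k$ appears to be the cleaner bound.
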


\begin{proof} \begin{enumerate}
 \item $A_0$ is defined over the algebraically closed field of absolute constants and has no algebraic families of quasiendomorphisms.
 \item This follows from the well-known characterization of endomorphism rings of elliptic curves.
 \item Otherwise $1$ would be an eigenvalue of $M^k$ for some $k$ and $G$ would be nonorthogonal to the fixed field of $\sigma^{m+k}$.
 \item The linear equations defining the subspace are precisely the quasiendomorphism equations defining the subgroups.
 Applying Facts \ref{whoisb0}, \ref{whoareABG}, and \ref{bijs} to an arbitrary $L_\sigma$-definable subgroup of $A_0$ shows that it must be defined by such quasiendomorphism equations. The invariance of the subspace under $M$ is equivalent to the minimality of $m$ in Fact \ref{whoisb0} for a subgroup of $G$.
 \item This is basic linear algebra.
\end{enumerate}\end{proof}

\subsection{More cases of Conjecture \ref{grouconj}.} \label{lastsec}

We now work out some linear algebra and algebraic number theory details towards proving Conjecture \ref{grouconj} in the setting of Notation \ref{gpquestsetup}.

\begin{lem} \label{gprolonglem} Up to finite-index subgroups, $G$ is also defined by
 $$h \in A_0\mbox{ and }
  (\tau(h), \tau^2(h), \ldots \tau^{mn}(h)) = \check{M} \ast (h,  \tau(h), \ldots \tau^{mn-1}(h))$$
 where $\check{M} \in \ggll_{mn}(R)$ is the companion matrix of the polynomial $P(x^n)$.
\end{lem}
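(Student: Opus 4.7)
The plan is to unpack both companion matrix equations into a single quasiendomorphism relation and observe that they agree on the nose, with any discrepancy absorbed by the ``up to finite-index'' hedge that is anyway built into working with quasiendomorphisms.

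First I would recall the general principle behind companion matrices. Write $P(x) = x^m - c_{m-1} x^{m-1} - \cdots - c_1 x - c_0$, so that $M$ has $M_{i,i+1} = 1$ for $i < m$ and last row $(c_0, c_1, \ldots, c_{m-1})$. Applied to the input $(g, \sigma g, \ldots, \sigma^{m-1} g)$ with output $(\sigma g, \ldots, \sigma^m g)$, the first $m-1$ rows of the equation $M \ast \vec{v} = \vec{w}$ become tautologies (they merely say consecutive $\sigma$-iterates agree), and only the last row has content; that row is the quasiendomorphism relation $P(\sigma)(g) = 0$. Thus, modulo the finite kernel inherent to $\ast$, $G$ is cut out by $P(\sigma)(g) = 0$.

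Next I would carry out the same computation with $\check{M}$. Since $P(x^n) = x^{mn} - c_{m-1} x^{(m-1)n} - \cdots - c_1 x^n - c_0$, the companion matrix $\check{M} \in \ggll_{mn}(R)$ again has trivial shift rows above the diagonal, and its last row has the coefficient $c_k$ in column $kn+1$ for $k = 0, 1, \ldots, m-1$ and zeros elsewhere. Feeding in the tuple $(h, \tau h, \ldots, \tau^{mn-1}h)$ with output $(\tau h, \ldots, \tau^{mn}h)$, the shift rows are again tautological, and the last row gives $\tau^{mn}(h) = c_{m-1} \tau^{(m-1)n}(h) + \cdots + c_1 \tau^n(h) + c_0 h$. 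But $\tau^n = \sigma$, so this is exactly $\sigma^m(h) = c_{m-1}\sigma^{m-1}(h) + \cdots + c_0 h$, i.e.\ $P(\sigma)(h) = 0$ again.

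I would then conclude that the subset of $A_0$ defined by the $\check{M}$-equation coincides with $G$ on the nose as a quasiendomorphism-kernel $\ker P(\sigma)$, so the two definitions produce groups sharing a finite-index subgroup, as required. The main obstacle, such as it is, is keeping straight the bookkeeping in identifying the nonzero entries of $\check{M}$ with the coefficients of $P(x^n)$ (and in noticing that every other row of each companion matrix is made tautological by the special form of the input tuple); once this is pinned down, the fact that $\sigma = \tau^n$ in $R$ trivialises the matching, and the finite-kernel ambiguity of $\ast$ is precisely what ``up to finite-index subgroups'' is there to absorb.
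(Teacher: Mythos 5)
Your proof is correct and takes essentially the same approach as the paper's: both proofs observe that the shift rows of the companion matrix equation are tautologically satisfied by the prolongation tuple, identify the nonzero entries of the last row of $\check{M}$ with the entries of the last row of $M$ in the $(j-1)n+1$ columns, and note that $\tau^n = \sigma$ collapses the resulting last-row relation to the defining equation of $G$. Your framing via $P(\sigma)(g)=0$ is merely a cosmetic repackaging of the same last-row computation.
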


\begin{proof} Fix $h \in A_0$; we must show that $h \in G$ if and only if it satisfies
 $$(\tau(h), \tau^2(h), \ldots \tau^{mn}(h)) = \check{M} \ast (h,  \tau(h), \ldots \tau^{mn-1}(h)).$$
 All but the last coordinates of the two sides of this equation are equal for any $h \in A_0$ and any companion matrix $\check{M}$. In the last row of the companion matrix $\check{M}$ of $P(x^n)$, the $(mn, (j-1)n+1)$th entry is $M_{mj}$ for each $j$, and the rest of the entries are zero.
  So the last coordinates of the two sides of this equation are equal if and only if
  $$\tau^{mn}(h) = \sum_j M_{mj} \tau^{(j-1)n}(h).$$
  Since $\tau^n=\sigma$, this is equivalent to
  $$\sigma^m(h) = \sum_j M_{mj} \sigma^{(j-1)}(h),$$
  which is the definition of $G$.
\end{proof}

\begin{lem} Bookkeeping Lemma. \label{bukip}\\
 Fix an integer $r$ and a matrix $L\in \operatorname{GLN}_r(R)$, and suppose that the group
$$H := \{h \in A_0:
  (\tau(h), \tau^2(h), \ldots \tau^{r}(h)) = L \ast (h,  \tau(h), \ldots \tau^{r-1}(h))$$
is (up to subgroups of finite index) a subgroup of $G$.
If $e \in R^{alg}$ is an eigenvalue of $L$, then $e^n$ is an eigenvalue of $M$.
\end{lem}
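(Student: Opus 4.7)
The plan is to reinterpret both $G$ and $H$ as $L_\tau$-definable groups (with $\tau$ in the role that $\sigma$ plays in Notation \ref{gpquestsetup}) and then apply Fact \ref{dist-conj-eival}(4) to exhibit $H$ as an $\check{M}$-invariant subspace. By Lemma \ref{gprolonglem}, up to finite-index subgroups $G$ is exactly the $L_\tau$-definable group cut out by the companion matrix $\check{M}$ of $P(x^n)$, and the eigenvalues of $\check{M}$ in $R^{alg}$ are precisely the roots of $P(x^n)$, that is, the set of $n$-th roots of eigenvalues of $M$. So it will suffice to show that every eigenvalue of $L$ is also an eigenvalue of $\check{M}$.

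Since $H \subseteq G$ up to finite-index subgroups, $H$ is an $L_\tau$-definable subgroup of $G$, so by Fact \ref{dist-conj-eival}(4) applied to $G$ in its $L_\tau$-incarnation, $H$ corresponds (up to finite-index subgroups) to an $\check{M}$-invariant $R$-subspace $V \subseteq R^{mn}$. The intrinsic action of $\tau$ on the $\tau$-prolongation module of $H$ is then represented on one hand by the restriction $\check{M}|_V$ and on the other hand by $L$: by Cayley--Hamilton, the defining equation $(\tau(h), \tau^2(h), \ldots, \tau^r(h)) = L \ast (h, \tau(h), \ldots, \tau^{r-1}(h))$ forces $\det(xI - L)$ to annihilate the $\tau$-action on the prolongation of $H$, so matching this action with that of $\check{M}|_V$ will show that every eigenvalue of $L$ is an eigenvalue of $\check{M}|_V$, and hence of $\check{M}$, and hence a root of $P(x^n)$. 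Therefore $P(e^n) = 0$, and $e^n$ is an eigenvalue of $M$, as desired.

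The main obstacle lies in the last comparison between the spectra of $L$ and $\check{M}|_V$. A priori the tuple $(h, \tau(h), \ldots, \tau^{r-1}(h))$ may fail to be linearly independent in the quasiendomorphism sense for generic $h \in H$, in which case $L$ could carry spurious eigenvalues absent from the genuine $\tau$-action on $V$. The plan to handle this is to invoke the analog of Fact \ref{whoisb0} for $\tau$, which provides a canonical companion-matrix defining equation for every $L_\tau$-subgroup of $G$; replacing $L$ by this canonical form does not change $H$ up to finite-index subgroups, and the resulting defining prolongation is generically free, so that the eigenvalues of $L$ coincide exactly with those of $\check{M}|_V$ and the bookkeeping argument above goes through.
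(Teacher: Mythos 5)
Your proposal takes a genuinely different route from the paper. The paper's proof is a direct computation: it forms the $m \times r$ array $\Box(a) = (\tau^j\sigma^i(a))$ for $a$ generic in $H$, computes $\sigma(\Box(a))$ two ways (row-by-row via $L^n$ because each row lies in $H$, column-by-column via $M$ because each column lies in $G$), shows that the two resulting block matrices $\hat L$ and $\hat M$ agree on the image of the block-diagonal map $\Lambda = \operatorname{diag}(I, L^n, \ldots, L^{(m-1)n})$, and then pushes an $L$-eigenvector through $\Lambda$ to produce an $\hat M$-eigenvector with eigenvalue $e^n$. Your plan instead wants to pass entirely through Lemma \ref{gprolonglem} and the invariant-subspace correspondence of Fact \ref{dist-conj-eival}(4): realize $G$ in $L_\tau$ via $\check M$, realize $H$ as an $\check M$-invariant subspace $V$, and then argue that the characteristic polynomial of $L$ coincides with that of $\check M|_V$, hence divides $P(x^n)$. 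This is a more structural, less computational argument, and it has the appeal of explaining \emph{why} the eigenvalues line up: $\check M$ is a companion matrix, hence cyclic, hence its restriction to any invariant subspace is again non-derogatory, so that subspace has a unique companion-matrix presentation.

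That said, there is a real gap in the middle. You correctly reduce to showing that $L$ and $\check M|_V$ have the same spectrum, but you only assert this ("matching this action with that of $\check M|_V$ will show...") rather than prove it. The difficulty is that Fact \ref{dist-conj-eival}(4) gives a correspondence between subgroups and invariant subspaces only "up to finite-index subgroups," i.e.\ at the level of quasiendomorphisms; it does not hand you an $R[\tau]$-module isomorphism between the abstract "prolongation module of $H$" with its $L$-action and $V$ with its $\check M|_V$-action. Establishing that intertwining is exactly the bookkeeping the lemma's name promises, and it is precisely what the paper's $\Box(a)$/$\Lambda$ computation accomplishes: the equality $\hat L\Lambda = \hat M\Lambda$ \emph{is} the concrete form of "the $L$-action on the prolongation of $H$ agrees with the restricted $\check M$-action." Without some version of that computation, your argument replaces the lemma's content with an equivalent unproved claim. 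Your final paragraph does correctly diagnose the other necessary hypothesis — that the prolongation tuple $(h, \tau(h), \ldots, \tau^{r-1}(h))$ be generically free, i.e.\ that $L$ be the canonical companion matrix of Fact \ref{whoisb0} — which the paper also uses (its proof asserts the first row of $\Box(a)$ is algebraically independent for generic $a$, which fails for non-canonical $L$). This is a genuine implicit hypothesis of the lemma as stated, and flagging it is a good catch; but after making that normalization, you still owe the intertwining step.
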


\begin{proof}
 Working in $L_\tau$, fix a generic realization $a$ of $H$. Let
 $$\Box(a) := (\tau^j \sigma^i (a))_{0 \leq i < m, 0\leq j <r} =$$
 $$= \begin{pmatrix}
 a & \tau(a) & \ldots & \tau^{r-1}(a) \\
 \sigma a & \tau(\sigma a) & \ldots & \tau^{r-1}(\sigma a) \\
 \vdots & \vdots & \ddots & \vdots \\
  \sigma^{m-1} a & \tau(\sigma^{m-1} a) & \ldots & \tau^{r-1}(\sigma^{m-1} a) \\
\end{pmatrix}.$$
 Since $\sigma$ and $\tau$ are automorphisms fixing $F_\cap$ over which $G$ and $H$ are defined, every entry in $\Box(a)$ also belongs to $H$ and $G$. We now compute $\sigma(\Box(a))$ in two different ways.

 On one hand, since $\sigma^i(a) \in H$, applying $\sigma=\tau^n$ to each row of $\Box(a)$ is the same as acting on it by $L^n$. So $\sigma(\Box(a))$ comes from applying $L^n$ to each row of $\Box(a)$; that is, acting by a block-diagonal matrix $\hat{L}$ with $m$ blocks, each of which is $L^n$.

 On the other hand, since $\tau^j(a) \in G$, applying $\sigma$ to each column of $\Box(a)$ is the same as acting on it by $M$. So $\sigma(\Box(a))$ comes from applying $M$ to each column of $\Box(a)$; that is, acting by a block-diagonal matrix $\tilde{M}$ with $r$ blocks, each of which is $M$.

 To match the ordering of the inputs, conjugate $\tilde{M}$ by a permutation matrix to get $\hat{M}$.
 Now $\hat{L} \Box(a) = \sigma(\Box(a)) = \hat{M} \Box(a)$ for any $a \in H$. It does not follow that $\hat{L} = \hat{M}$: for example, if $r > n$, many entries in $\Box(a)$ are repeated!

 However, $Z := \{ v \in A_0^{rm}: \hat{L} v = \hat{M} v\}$ is an algebraic subgroup of $A_0^{rm}$ that contains $\Box(a)$. Let $X \leq Z$ be the smallest algebraic subgroup $X$ of $A_0^{rm}$ containing $\Box(a)$.
 Since $a$ is generic in $H$, the first row of $\Box(a)$ is algebraically independent; so $X$ (and, therefore, $Z$) surjects onto the first $r$ coordinates. Indeed, $X$ has dimension $r$: it is defined precisely by ``the $i$th row is $L^{(i-1)n}$ applied to the first row.'' Now $X \leq Z$ means that for any $w \in A_0^r$, the quasihomomorphisms $\hat{L} \ast $ and $\hat{M} \ast$ agree on $(w, L^n(w), \ldots L^{(m-1)n}(w))$.

 Now, forgetting all about the algebraic group and quasiendomorphisms, we obtain a statement about linear algebra over $R$: for any $b \in R^r$, $\hat{L}$ and $\hat{M}$ agree on $\zeta(b)$, where
  $$\zeta : R^r \rightarrow R^{rm} : b \mapsto (b, L^n(b), \ldots L^{(m-1)n}(b))$$
 is the linear map represented by the block-diagonal matrix
 $\Lambda$ with blocks $L^{in}$ for $i = 0, 1, \ldots, m-1$.
 Equivalently, the matrices $\hat{L} \Lambda$ and $\hat{M} \Lambda$ are equal. Therefore, they also represent the same map from $(R^{alg})^r$ to $(R^{alg})^{rm}$.

 Let $e \in R^{alg}$ be an eigenvalue of $L$ with eigenvector $\vec{v} \in (R^{alg})^r$. Then $L^n (L^x(\vec{v})) = e^n L^x(\vec{v})$ for any $x$, so $\Lambda(\vec{v}) \in (R^{alg})^{rm}$ is an eigenvector of $\hat{L}$ with eigenvalue $e^n$. Now $(\hat{M} \Lambda) \vec{v} = (\hat{L} \Lambda) \vec{v} = e^n \Lambda \vec{v}$, so $\Lambda \vec{v}$ is also an eigenvector of $\hat{M}$ with eigenvalue $e^n$.
 Thus, $e^n$ is also an eigenvalue of $\tilde{M}$ which is conjugate to $\hat{M}$.
 Finally, since $\tilde{M}$ is block-diagonal with all blocks $M$, $e^n$ must also be an eigenvalue of $M$.
\end{proof}

 There ought to be a way to cut out all the matrices and eigenvalues and just talk about their characteristic polynomials and their roots; but this is the cleanest, convincingest proof I can write down. The next tool we use to analyze characteristic polynomials seems like a simple exercise in algebraic number theory, but we have not been able to find it in the literature.

\begin{deff}
 A \emph{hereditary factor} of a polynomial $P \in F[x]$ over a field $F$ is any factor of $P(x^n)$ for some $n \in \mathbb{N}$.

 A polynomial $P \in F[x]$ over a field $F$ is \emph{hereditarily irreducible over $F$} if for every $n \in \mathbb{N}$, the polynomial $P(x^n)$ is irreducible over $F$.
\end{deff}

\begin{lem} \label{algnumthlem}
 Let $F$ be a number field, and let $P \in F[x]$ be a polynomial none of whose roots in $F^{alg}$ are roots of unity. Then $P$ hereditarily factors into hereditarily irreducible polynomials: that is, for some $N \in \mathbb{N}$, the polynomial $P(x^N)$ is a product of polynomials over $F$ that are hereditarily irreducible over $F$.
\end{lem}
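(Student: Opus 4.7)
The plan is to apply the Vahlen--Capelli theorem, which says that over a field $L$ of characteristic zero $x^n - \beta$ is irreducible iff $\beta \notin L^p$ for every prime $p \mid n$ and $\beta \notin -4L^4$ when $4 \mid n$. Consequently, the minimal polynomial of $\beta$ over $F$ is hereditarily irreducible iff $\beta \notin F(\beta)^p$ for every prime $p$ and $\beta \notin -4 F(\beta)^4$. So I want to produce an $N$ such that every $N$-th root of every root of $P$ satisfies these two conditions in its own field of definition; the irreducible factors of $P(x^N)$ over $F$ are precisely the minimal polynomials of such roots, hence they will all be hereditarily irreducible.

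After reducing to the case where $P$ is irreducible with root $\alpha$, fix a compatible system of $N$-th roots $\alpha^{1/N} \in F^{alg}$ and let $L_N := F(\alpha^{1/N})$, $K := F(\alpha)$. The main invariant is the \emph{Kummer defect} $\delta(N) := N/[L_N:K]$. A ramification argument at a prime $\mathfrak{p}$ of $F$ with $v_\mathfrak{p}(\alpha) \ne 0$ (with the Dirichlet unit theorem taking over when $\alpha$ is a global unit) shows $\delta$ is bounded by a constant $D$ depending only on $\alpha$ and $F$. Since $\delta$ is non-decreasing along divisibility ($N \mid M$ gives $[L_M:L_N] \mid M/N$, hence $\delta(N) \mid \delta(M)$), there is an $N_0$ with $\delta(N) = D$ for every $N$ divisible by $N_0$; equivalently, $[L_{Np}:L_N] = p$ for every prime $p$ and every such $N$. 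Vahlen's half of the theorem then forces $\alpha^{1/N} \notin L_N^p$: a witness $\gamma = \eta \alpha^{1/(pN)} \in L_N$ with $\eta$ a $p$-th root of unity would give $L_{pN} \subseteq L_N(\eta)$, but $[L_N(\eta):L_N] \le p-1 < p = [L_{pN}:L_N]$, a contradiction. The Capelli exceptional case $-4\alpha^{1/N} = \delta^4$ in $L_N$ is handled by passing from $N$ to $2N$: the relation gives $\alpha^{1/(2N)} = \pm i \delta^2/2 \in L_N(i)$, so stabilization forces $i \notin L_N$ (else $L_{2N} = L_N$ and $\delta(2N) = 2D$); if Capelli also failed at $2N$, a parallel computation produces $L_{4N} = L_{2N}$ and again $\delta(4N) = 2D$, contradicting boundedness. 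Thus after possibly replacing $N_0$ by $2N_0$ the minimal polynomial of $\alpha^{1/N_0}$ is hereditarily irreducible.

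The main obstacle, and the hardest step, is that the irreducible factors of $P(x^{N_0})$ come from \emph{every} $N_0$-th root $\zeta \alpha^{1/N_0}$ with $\zeta$ an $N_0$-th root of unity, not just the reference root. The same style of argument applies inside $F(\zeta \alpha^{1/N_0})$, but now $\zeta$ need not lie in $L_{N_0}$, so a failed Vahlen--Capelli condition for $\zeta \alpha^{1/N_0}$ produces a $pN_0$-th root of $\alpha$ in a Kummer subfield of $L_{N_0}(\mu_{N_0})$ rather than in $L_{N_0}$ itself. A bounded-defect argument nevertheless controls the situation: such an obstruction forces the prime $p$ to be bounded in terms of $[L_{N_0}(\mu_{N_0}):L_{N_0}] \le \phi(N_0)$, and each problematic prime can be absorbed by replacing $N_0$ with a suitable multiple. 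Taking an appropriate $N = M N_0$ for each irreducible factor of $P$ and then a common multiple over the (finitely many) irreducible factors yields the required $N$ with $P(x^N)$ a product of hereditarily irreducible polynomials over $F$.
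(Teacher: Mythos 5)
Your approach is genuinely different from the paper's and uses two good ideas: the Vahlen--Capelli criterion to reduce hereditary irreducibility of a minimal polynomial of $\beta$ over $F$ to $p$-th-power conditions on $\beta$ in $F(\beta)$, and the boundedness of the ``Kummer defect'' $\delta(N)=N/[F(\alpha^{1/N}):F(\alpha)]$ via valuations and Dirichlet's unit theorem. That boundedness is essentially the same number-theoretic input the paper uses (unique factorization of ideals plus finite generation of the unit group), and the stabilization-along-divisibility argument for your reference root $\alpha^{1/N}$ is sound in outline. The paper instead builds a tree of irreducible factors of $P(x^{n!})$, extracts an infinite chain if the tree were infinite, and derives the contradiction from the \emph{constant coefficients} $a_i$ of the factors, which satisfy $a_i^{n_i}=\pm a^{k_i}$ with $k_i/n_i$ strictly decreasing; this is more elementary and avoids Vahlen--Capelli entirely.

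The genuine gap is exactly the step you flag as ``the main obstacle, and the hardest step,'' and I do not think your sketch closes it. The irreducible factors of $P(x^{N_0})$ over $F$ are the minimal polynomials of the various roots $\zeta\alpha^{1/N_0}$, and $F(\zeta\alpha^{1/N_0})$ is in general a \emph{different} subfield of $F(\alpha^{1/N_0},\mu_{N_0})$ from $L_{N_0}=F(\alpha^{1/N_0})$. The defect $\delta(N)$, which you computed for the reference root, does not control $p$-th-power membership of $\zeta\alpha^{1/N_0}$ in $F(\zeta\alpha^{1/N_0})$: if $\zeta\alpha^{1/N_0}=\eta^p$ there, $\eta$ is a $pN_0$-th root of $\alpha$ differing from $\alpha^{1/(pN_0)}$ by a $pN_0$-th root of unity, and $[F(\eta):F(\alpha)]$ need not equal $[L_{pN_0}:F(\alpha)]$. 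The asserted bound ``$p$ is bounded in terms of $[L_{N_0}(\mu_{N_0}):L_{N_0}]\le\phi(N_0)$'' is not justified, and even granting it, ``each problematic prime can be absorbed by replacing $N_0$ with a suitable multiple'' does not obviously terminate: every enlargement of $N_0$ introduces a fresh supply of roots of unity to check. Notice that the paper's constant-coefficient trick neatly kills this exact difficulty: when you raise the product of the $k_i$ relevant $n_i$-th roots of $a$ to the $n_i$-th power, the ambient roots of unity all become $1$, so the resulting relation $a^{k_i/n_i}\in\tilde F$ holds \emph{regardless of which} roots of unity multiplied which roots, and no case analysis over $\zeta$ is needed. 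To salvage your route you would likely need to work over the cyclotomic extension $F(\mu_N)$ (where Kummer theory is uniform across all $N$-th roots) and then descend to $F$, which is a substantially more involved argument than what you have written.
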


\begin{proof}
 Without loss of generality, $P$ itself is irreducible over $F$. Then $P$ has no multiple roots, and then neither does $P(x^n)$ for any $n$. The irreducible factors of $P(x^{n!}$ over $F$ form the $n$th level of a tree $T_0$, with a factor $R(x)$ of $P(x^{(n+1)!}$ lying above a factor $Q(x)$ of $P(x^{n!})$ whenever $R(x)$ divides $Q(x^{n+1}$.

 Now a factor $Q(x)$ of $P(x^{n!})$ is hereditarily irreducible if and only if there are no splits above it in this tree $T_0$. Let us trim $T_0$ by removing all nodes above such hereditarily irreducible factors, and call the new tree $T$. We need to show that $T$ is finite; suppose, towards a contradiction, that it is not. This infinite finitely-branching tree $T$ must have an infinite chain: integers $1= n_0 < n_1 < n_2 < n_3 < \ldots $ (all factorials, with $n_i$ dividing $n_{i+1}$ for each $i$) and irreducible factors $Q_i(x) \in F[x]$ of $P(x^{n_i}$ such that $Q_{i+1}(x)$ properly divides $Q_i(x^{\frac{n_{i+1}}{n_i}})$ for each $i$.

 Since $P$ is irreducible, any two roots of $P$ in $F^{alg}$ are conjugate by an automorphism $\rho$ of $F^{alg}$ over $F$. Since $\rho$ fixes the coefficients of $Q_i$, each root of $P$ has the same number $k_i \leq n_i$ of $n_i$th roots that are also of roots of $Q_i$. Since $Q_{i+1}(x)$ properly divides $Q_i(x^{\frac{n_{i+1}}{n_i}})$ for each $i$, we must have $\frac{k_{i+1}}{n_{i+1}} \lneq \frac{k_i}{n_i}$ for each $i$.

 Fix a root $a \in F^{alg}$ of $P$; now $\tilde{F} := F(a)$ is another number field, and $\tilde{P}(x) := x-a$ is an irreducible polynomial in $\tilde{F}[x]$. The $k_i$ $n_i$th roots of $a$ which are also roots of $Q_i$ form a Galois orbit over $\tilde{F}$ and thus correspond to a factor $\tilde{Q}_i \in \tilde{F}[x]$ of $Q(i)$ and also of $\tilde{P}(x^{n_i})$.

 The constant coefficient $a_i$ of $\tilde{Q}_i$ is (up to $\pm1$) the product of all the roots of $\tilde{Q}_i$, so it is the product of $k_i$ $n_i$th roots of $a$. That is, $a_i^{n_i} = \pm a^{k_i}$. That is, $a^{\frac{k_i}{n_i}} \in \tilde{F}$ for all $i$. This, together with the facts that $a$ is not a root of unity and $\frac{k_{i+1}}{n_{i+1}} \lneq \frac{k_i}{n_i}$ for each $i$ contradicts Unique Factorization (of ideals in the ring of integers of $\tilde{F}$) and/or Dirichlet's Theorem (that the group of units of the ring of integers of a number field is finitely generated).
\end{proof}

\begin{lem} \label{herirrcharlem}
 (Using Notation \ref{gpquestsetup}.) If $G$ is $L_\sigma$-one-based and the characteristic polynomial $P$ of $M$ is hereditarily irreducible over $R$, then $G$ is $L_\qq$-minimal. \end{lem}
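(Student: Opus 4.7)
The plan is to apply Corollary \ref{rankcomputesheep} with the sufficient directed subcollection $\{L_{q/n} : n \in \nn^+\}$ of $L_\qq$: it suffices to show that for each $n \geq 1$, writing $\tau = \sigma_{q/n}$, the $L_\tau$-Lascar rank of $G$ is at most $1$. Adopt Notation and Assumptions \ref{notasigtau}. By Lemma \ref{gprolonglem}, $G$ is, up to finite index, $L_\tau$-defined by the companion matrix $\check M \in \ggll_{mn}(R)$ of $P(x^n)$, and hence has finite $L_\tau$-Lascar rank with $\tau$-degree $mn$.

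Now let $H \leq G$ be any infinite $L_\tau$-definable subgroup. Then $H \leq A_0$ has finite $L_\tau$-Lascar rank, so Facts \ref{whoisb0}, \ref{whoareABG}, and \ref{bijs} applied in the $L_\tau$-signature encode $H$ (up to finite index) by a companion matrix $L \in \ggll_r(R)$ of some polynomial $Q \in R[x]$ of degree $r$. The inclusion $H \leq G$ forces $r \leq mn$. Lemma \ref{bukip} applied to the pair $H \leq G$ shows that each eigenvalue of $L$ in $R^{alg}$ has its $n$th power among the roots of $P$, so every root of $Q$ is a root of $P(x^n)$. By hereditary irreducibility of $P$, the polynomial $P(x^n)$ is irreducible in $R[x]$, and its roots form a single Galois orbit over $R$. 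Since $Q \in R[x]$ is Galois-stable, all roots of $P(x^n)$ appear in $Q$ with equal multiplicity, forcing $Q = P(x^n)^k$ for some $k \geq 1$ (with $k \geq 1$ because $H$ is infinite, hence $r \geq 1$). Combining $r = kmn$ with $r \leq mn$ gives $k = 1$, so $Q = P(x^n)$ and $L = \check M$. Thus the matrix encodings of $H$ and $G$ coincide, giving $H = G$ up to finite index.

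To upgrade ``$G$ has no proper infinite $L_\tau$-definable subgroup'' to ``$G$ is $L_\tau$-minimal'', one checks that $G$ is $L_\tau$-one-based. Since hereditary irreducibility implies irreducibility of $P$, Fact \ref{dist-conj-eival}(4)--(5) together with the $L_\sigma$-one-basedness of $G$ makes $G$ also $L_\sigma$-minimal, and Fact \ref{zitrifact} then makes $G$ $L_\sigma$-orthogonal to all $L_\sigma$-definable fixed fields. Passing to the interdefinable type of $a' := (a, \sigma(a), \ldots, \sigma^{m-1}(a))$, which satisfies the algebraicity hypothesis of Lemma \ref{2ndtypelem}(3), then yields $L_\tau$-orthogonality of $G$ to all $L_\tau$-definable fixed fields. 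The Zilber Trichotomy in $L_\tau$-ACFA rules out fieldlike minimal subtypes of $G$, so $G$ is $L_\tau$-one-based. A one-based group of finite Lascar rank whose only infinite definable subgroup (up to finite index) is itself has Lascar rank exactly $1$, so $U_\tau(G) = 1$, and Corollary \ref{rankcomputesheep} concludes $L_\qq$-minimality of $G$.

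The main obstacle is the polynomial bookkeeping: one must convert the set-level statement of Lemma \ref{bukip} (about eigenvalues in $R^{alg}$) into the multiplicative statement $Q = P(x^n)^k$ in $R[x]$, and then pin $k = 1$ via the $\tau$-degree bound from $H \leq G$. Hereditary irreducibility of $P$ is precisely what collapses the Galois-conjugacy step to a single power of $P(x^n)$, leaving no room for an intermediate subgroup. The transfer of one-basedness from $L_\sigma$ to $L_\tau$ via Lemma \ref{2ndtypelem}(3) is a minor but essential second ingredient that rules out fieldlike behavior quietly inflating the $L_\tau$-rank past $1$.
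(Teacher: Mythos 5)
Your proof is correct and uses the same framework as the paper: reduce via Corollary \ref{rankcomputesheep} to showing $L_\tau$-minimality of $G$ for each $\tau^n = \sigma$, encode $G$ in $L_\tau$ by the companion matrix $\check M$ of $P(x^n)$ via Lemma \ref{gprolonglem}, control the characteristic polynomial $Q$ of any subgroup's encoding matrix by Lemma \ref{bukip}, and apply hereditary irreducibility as the clincher. The local handling of the subgroup step differs. The paper argues by contradiction: using $L_\tau$-one-basedness of $G$, a failure of $L_\tau$-minimality is witnessed by an $L_\tau$-\emph{minimal} infinite-index subgroup $H$, so Fact \ref{dist-conj-eival}(5) makes $Q$ irreducible over $R$; then $Q \mid P(x^n)$ with $\deg Q = r < mn$, directly contradicting irreducibility of $P(x^n)$. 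You instead take an \emph{arbitrary} infinite $L_\tau$-definable subgroup $H$, use the Galois-orbit argument (transitivity on the roots of the irreducible $P(x^n)$ forces $Q = P(x^n)^k$) plus the degree bound $r \leq mn$ to pin $k=1$ and hence $H = G$ up to finite index, and then invoke $L_\tau$-one-basedness at the very end to turn ``no proper infinite definable subgroup'' into $U_\tau(G)=1$. Both versions require establishing that $G$ stays one-based in $L_\tau$; the paper delegates this to a citation (the reference there is apparently unresolved), whereas you supply the argument explicitly via the Zilber trichotomy Fact \ref{zitrifact} together with Lemma \ref{2ndtypelem}(3) applied to the $\sigma$-prolongation of a generic. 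Your route trades the paper's minimality-of-$H$ plus Fact \ref{dist-conj-eival} for a somewhat longer Galois/degree bookkeeping and endgame, but the two are essentially on par and rely on the same key lemmas.
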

\begin{proof}
 Since $\{ L_\tau : \tau^n =\sigma, n \in \mathbb{N} \}$ is a sufficient collection of subsignatures of $L_\qq$, by Corollary \ref{rankcomputesheep}, it suffices to show that $G$ is $L_\tau$-minimal for each such $\tau$.
 Suppose toward contradiction that this fails for some particular $n$. Since by ref{zilberesq} $G$ remains one-based in $L_\tau$, this failure must be witnessed by an $L_\tau$-definable, $L_\tau$-minimal subgroup $H$ of $G$, of infinite index.

 Analyzing $H$ as in Section \ref{group-redct-sect} yields an integer $r \lneq nm$ and a matrix $L \in \ggll_r(R)$ satisfying the hypotheses of Lemma \ref{bukip}. Let $Q(x)$ be the characteristic polynomial of $L$. By Lemma \ref{bukip}, all roots of $Q$ are $n$th roots of roots of $P$. By Lemma \ref{dist-conj-eival} applied to $H$, the polynomial $Q$ is irreducible over $R$, so it has no repeated roots. Thus, $Q(x)$ divides $P(x^n)$. Since the degree $r$ of $Q$ is strictly less than the degree $mn$ of $P(x^n)$, this contradicts hereditary irreducibility of $P$.
\end{proof}

\begin{prop} \label{happygpprop}  (Using Notation \ref{gpquestsetup}.) If $G$ is $L_\sigma$-one-based, then the $L_\qq$ Lascar rank of $G$ is finite and equal to the number of hereditarily irreducible hereditary factors of the characteristic polynomial of $M$. \end{prop}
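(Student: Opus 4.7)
The plan is to pass from $L_\sigma$ to a reduct $L_\tau$ in which $P(x^N)$ factors fully into hereditarily irreducible pieces, decompose $G$ accordingly by linear algebra, and apply Lemma \ref{herirrcharlem} on each piece.

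First, apply Lemma \ref{algnumthlem} to obtain $N \in \mathbb{N}$ with $P(x^N) = P_1(x) \cdots P_k(x)$, where each $P_i \in R[x]$ is hereditarily irreducible over $R$ and $k$ is the number of hereditarily irreducible hereditary factors of $P$. Since $P$ is irreducible over $R$ (Fact \ref{dist-conj-eival}(5)) and $R$ is a number field of characteristic zero, $P$ and hence $P(x^N)$ have distinct roots, so the $P_i$ are pairwise coprime. Set $\tau := \sigma_{q/N}$ so that $\tau^N = \sigma$, and apply Lemma \ref{gprolonglem}: up to finite index, $G$ is defined in $L_\tau$ by the companion matrix $\check{M} \in \ggll_{mN}(R)$ of $P(x^N)$. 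By the Chinese Remainder Theorem, $R^{mN}$ splits as a direct sum $V_1 \oplus \cdots \oplus V_k$ of $\check{M}$-invariant subspaces, with $\check{M}|_{V_i}$ having characteristic polynomial $P_i$. By Fact \ref{dist-conj-eival}(4) applied in $L_\tau$, each $V_i$ corresponds (up to finite index) to an $L_\tau$-definable subgroup $H_i \leq G$ cut out, with respect to $\tau$, by the companion matrix of $P_i$; the natural map $G \to H_1 \times \cdots \times H_k$ is then an isogeny.

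To apply Lemma \ref{herirrcharlem} (with $\tau$, $P_i$ in place of $\sigma$, $P$) and conclude that each $H_i$ is $L_\qq$-minimal, I must first verify that $H_i$ is $L_\tau$-one-based. By Fact \ref{dist-conj-eival}(3) applied to $G$, no root of $P$ is a root of unity; since every root of $P_i$ is an $N$-th root of a root of $P$, the same holds for $P_i$. In characteristic zero, the companion matrix of $P_i$ having no root-of-unity eigenvalue rules out any minimal subquotient of $H_i$ that is nonorthogonal to a fixed field $\fix(\tau^j)$, since such nonorthogonality would manifest as $1$ being an eigenvalue of some power of the matrix. Then the Zilber Trichotomy (Fact \ref{zitrifact}) applied in $L_\tau$ forces every minimal definable subquotient of the group $H_i$ to be grouplike and hence one-based, which gives $L_\tau$-one-basedness of $H_i$. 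Lemma \ref{herirrcharlem} now yields $U_\qq(H_i) = 1$, and additivity of Lascar rank over isogenies (valid for finite-rank groups in simple theories) gives
\[
 U_\qq(G) = \sum_{i=1}^k U_\qq(H_i) = k,
\]
as claimed.

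The main delicate step is the transfer of one-basedness from $L_\sigma$ to $L_\tau$, since one-basedness need not survive an expansion of language. It is rescued here by combining the eigenvalue criterion with the characteristic-zero restriction, which together ensure that no new fieldlike subquotient is born in the expanded signature. A positive-characteristic version of the same argument would have to contend with the Frobenius-twisted fixed fields of Proposition \ref{finrakfieldprop}, requiring additional bookkeeping to rule out nonorthogonality to those.
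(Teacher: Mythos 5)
Your proof follows the same route as the paper's: apply Lemma \ref{algnumthlem} to factor $P(x^N)$ into hereditarily irreducible pieces, pass to $L_\tau$ via the companion matrix of $P(x^N)$ using Lemma \ref{gprolonglem}, decompose $G$ (up to isogeny) into subgroups $H_i$ corresponding to the $\check{M}$-invariant subspaces, apply Lemma \ref{herirrcharlem} to obtain $L_\qq$-minimality of each $H_i$, and conclude by additivity of finite Lascar rank across the isogeny. Your explicit verification that each $H_i$ is $L_\tau$-one-based (via the no-root-of-unity criterion from Fact \ref{dist-conj-eival}(3) and the Zilber Trichotomy) makes precise a step that the paper's proof of Lemma \ref{herirrcharlem} asserts via what is in fact a broken cross-reference, so this is a genuine improvement in rigor rather than a departure.
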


\begin{proof}
Let $P \in R[x]$ be the characteristic polynomial of $M$. By Lemma \ref{algnumthlem}, $P(x^n)$ factors into hereditarily irreducible (over $R$) factors $Q_i(x)$ for some $n \in \mathbb{N}$.
Let $\tau$ be the named automorphism for which $\tau^n =\sigma$; by Lemma \ref{gprolonglem}, $G$ is the set of points
$h \in A_0$ such that
  $$(\tau(h), \tau^2(h), \ldots \tau^{mn}(h)) = \check{M} \ast (h,  \tau(h), \ldots \tau^{mn-1}(h))$$
for the companion matrix $\check{M}$ of $P(x^n)$.
Each irreducible factor $Q_i(x)$ of $P(x^n)$ corresponds to a subspace $V_i$ of $R^{mn}$ invariant under $\check{M}$, and $R^{mn}$ is the direct sum of these $V_i$.
 These subspaces in turn correspond to $L_\tau$=definable subgroups $H_i$ of $G$.
 Because the quasiendomorphism action is only defined up to finite noise, $G$ need not be the direct sum of $H_i$; but the natural homomorphism from $\sum_i H_i$ to $G$ is surjective (up to finite-index subgroups) onto $G$ and has a finite kernel. This is good enough to compute ranks: the rank of $\hat{G}$ will be the sum of ranks of $H_i$, not only in $L_\tau$, but also in any expansion.
Since the characteristic polynomials $Q_i$ of the matrices encoding $H_i$ are hereditarily irreducible, $H_i$ are $L_\qq$-minimal by Lemma \ref{herirrcharlem}. Thus the $L_\qq$-Lascar rank of $G$ is the number of these $H_i$s.
\end{proof}


\section*{Appendix}

Here we obtain the combinatorial characterization of rosiness in terms of dividing ranks stated in Fact \ref{adler-rosy-fact} from a theorem in \cite{adlerth} by unwrapping many definitions. All numbered references below are to \cite{adlerth}. We use notation from \cite{adlerth} without defining it.

\begin{fact} ( Theorem 2.37(3)) A theory $T$ is rosy if and only if $D_{\phi, \psi}(\emptyset) < \infty$ for every $(\phi, \psi) \in \Xi_M$. \end{fact}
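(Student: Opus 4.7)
The plan is to unpack what the rank $D_{\phi,\psi}(\emptyset)$ measures and show that its finiteness is equivalent to the combinatorial data listed in Fact \ref{adler-rosy-fact}. A pair $(\phi, \psi) \in \Xi_M$ amounts to a formula $\phi(\bar{x}; u\bar{v})$ together with an integer $k$ so that the conjunction $\bigwedge_{i<k}\phi(\bar{x}; u_i\bar{v}) \wedge \bigwedge_{i \neq j}u_i \neq u_j$ is inconsistent; thus fixing $(\phi, \psi)$ automatically supplies the first bullet of Fact \ref{adler-rosy-fact}. Adler's rank $D_{\phi,\psi}$ increases whenever one can exhibit a new $\phi$-instance that strong-divides over the current base, so $D_{\phi,\psi}(\emptyset) = \infty$ yields, by compactness in a monster of $M^{eq}$, an infinite chain of strong-dividing $\phi$-instances.

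For the forward direction (non-rosiness gives the combinatorial data), I would extract from such an infinite chain sequences $(b^i, \bar{c}^i)_{i < \omega}$ so that $\{\phi(\bar{x}; b^i, \bar{c}^i) : i \in \omega\}$ is consistent (second bullet) and, for each $i$, the formula $\phi(\bar{x}; b^i, \bar{c}^i)$ strong-divides over $A_i := \{(b^j, \bar{c}^j) : j < i\}$. Unwinding strong-dividing, this means $b^i \notin \acl(A_i)$ together with infinitely many $A_i$-conjugates $(b^{ij}, \bar{d}^i)$ of $(b^i, \bar{c}^i)$ (third bullet) whose $\phi$-instances are jointly $k$-inconsistent for some $k$; the $k$-inconsistency is automatic from $(\phi, \psi) \in \Xi_M$, while non-algebraicity of $b^i$ over $A_i$ forces $b^{ij} \neq b^{ij'}$ for $j \neq j'$ (fourth bullet).

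The converse is straightforward: given the combinatorial witnesses of Fact \ref{adler-rosy-fact}, the inconsistency and distinctness hypotheses produce a pair $(\phi, \psi) \in \Xi_M$, and the nested sequence $(b^i, \bar{c}^i, \bar{d}^i, b^{ij})$ directly populates an infinite branch in the rank tree for $D_{\phi,\psi}(\emptyset)$, forcing it to be infinite and hence contradicting rosiness by the cited theorem.

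The main obstacle is upgrading strong-dividing at each stage, which a priori gives only infinitely many parameters satisfying the right $\phi$- and $\psi$-relations, to the full $L$-type equivalence $\operatorname{tp}_L(b^{ij}, \bar{d}^i / A_i) = \operatorname{tp}_L(b^i, \bar{c}^i / A_i)$ demanded in the third bullet. I would handle this by a standard Erd\H{o}s--Rado extraction of indiscernibles from sufficiently long initial segments of strong-dividing witnesses, or equivalently by passing to types throughout — which is precisely what Adler's rank $D_{\phi,\psi}$ is designed to accommodate, so that the type-level witnesses can be read off directly once one knows the rank is infinite.
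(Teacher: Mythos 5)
This statement is not proved in the paper at all: it is a verbatim citation of Theorem~2.37(3) from Adler's paper \cite{adlerth}, quoted as a black box. The Appendix that follows it does not establish the equivalence between rosiness and finiteness of the ranks $D_{\phi,\psi}(\emptyset)$; rather, it \emph{assumes} that equivalence and then unwraps Adler's Definitions~2.9, 2.10, 2.20, 2.23, and~2.35 to translate the right-hand side into the explicit combinatorial condition recorded in Fact~\ref{adler-rosy-fact}. So what the paper "proves" here is a reformulation, not the theorem itself.

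Your proposal conflates these two tasks. Your plan --- ``unpack what the rank $D_{\phi,\psi}(\emptyset)$ measures and show that its finiteness is equivalent to the combinatorial data listed in Fact~\ref{adler-rosy-fact}'' --- is precisely the definition-unwinding that the Appendix carries out; it is \emph{not} a proof that this finiteness condition is equivalent to rosiness. The latter is the actual mathematical content of Theorem~2.37(3), and proving it would require relating the local dividing ranks to the existence of a thorn-independence relation satisfying the axioms of a strict independence relation (this is the heart of Adler's Section~2, and it is not a matter of unwrapping definitions). Your own argument even betrays this circularity: the last sentence of the converse direction appeals to ``the cited theorem'' to conclude non-rosiness, which is exactly the statement you set out to prove. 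There are also smaller slippages --- Adler's $D_{\phi,\psi}$ rank is built from $(\phi,\psi)$-dividing patterns, and the translation to strong-dividing in the sense of Onshuus requires the specific shape of $\psi$ for pairs in $\Xi_M$; and the passage from ``infinitely many conjugates satisfying the $\psi$-relations'' to full type equality $\operatorname{tp}_L(b^{ij},\bar d^i/A_i)=\operatorname{tp}_L(b^i,\bar c^i/A_i)$ is already built into Adler's Definition~2.10 rather than needing an Erd\H{o}s--Rado extraction --- but the central problem is that you have supplied a derivation of Fact~\ref{adler-rosy-fact} from the cited theorem, not a proof of the cited theorem. The correct response to this statement is simply to cite \cite{adlerth}, as the paper does.
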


 With Definition 2.23, this says that $T$ is not rosy if and only if there is some pair $(\phi, \psi) \in \Xi_M$ such that $\emptyset$ has $(\phi, \psi)$ dividing patterns of order-type $\omega$.
 At the end of this section we note that the empty type in Theorem 2.37(3) might as well be a partial type over the empty set; for the time being we do not make this assumption.

 \begin{lem} Unwrapping more definitions, Theorem 2.37(3) says that $T$ is not rosy if and only if there is some formula $\phi(\bar{x}; u \bar{v})$ and some $k$ such that $$( \wedge_{i < k} \phi(\bar{x}; u_i \bar{v}) ) \wedge ( \wedge_{i\neq j <k} u_i \neq u_j )$$ is inconsistent and $\emptyset$ has a $(\phi, \psi)$ dividing pattern of order-type $\omega$ for $\psi := (\wedge_{i\neq j <k} u_i \neq u_j) \wedge (\wedge_{i,j} \bar{v}_i = \bar{v}_j)$. \end{lem}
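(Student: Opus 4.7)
The plan is to trace Definition 2.23 of \cite{adlerth} and match the resulting condition against the stated conclusion; the proof is really a matter of bookkeeping.

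First, I would recall that a pair $(\phi,\psi)$ lies in $\Xi_M$ precisely when, after writing $\phi(\bar x;\bar y)$ and thinking of $\bar y_0,\ldots,\bar y_{k-1}$ as $k$ fresh copies of $\bar y$, the formula $\psi(\bar y_0,\ldots,\bar y_{k-1})$ is a consistency obstruction for $\bigwedge_{i<k}\phi(\bar x;\bar y_i)$; equivalently, $\psi\wedge\bigwedge_{i<k}\phi(\bar x;\bar y_i)$ is inconsistent for that $k$. This step is purely a matter of rephrasing Definition 2.23.

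Second, I would normalize $\psi$ by splitting the parameter variables as $\bar y=u\bar v$. The coordinate $u$ records the portion of $\bar y$ that must \emph{vary} between distinct instances to trigger inconsistency, and the tuple $\bar v$ records the portion that must be \emph{shared}; any Boolean obstruction on the parameters that a strong-dividing witness uses is of precisely this form, because $\Xi_M$ only refers to the combinatorics of ``are these parameters the same or different.'' Rewriting $\psi$ this way produces exactly $\psi=(\bigwedge_{i\neq j<k}u_i\neq u_j)\wedge(\bigwedge_{i,j}\bar v_i=\bar v_j)$, and with this $\psi$ the $\Xi_M$ condition collapses to the stated inconsistency of
\[
\Bigl(\bigwedge_{i<k}\phi(\bar x;u_i\bar v)\Bigr)\wedge\Bigl(\bigwedge_{i\neq j<k}u_i\neq u_j\Bigr),
\]
where a single $\bar v$ is repeated in place of $\bar v_0,\ldots,\bar v_{k-1}$.

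Finally, the clause ``$\emptyset$ has a $(\phi,\psi)$-dividing pattern of order type $\omega$'' is imported verbatim from Definition 2.23, so no further unwrapping is needed there. The only real obstacle is the normalization in the middle step: one must justify that Adler's definition of $\Xi_M$ does not force us to consider Boolean relations among $\bar y_i$'s richer than ``pairwise distinct in one block of coordinates, constant in the other,'' and this is a direct inspection of the definition. Once this normalization is in place, the equivalence claimed by the lemma is an identity of definitions.
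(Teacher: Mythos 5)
Your proposal lands on the right statement, but two points of imprecision are worth flagging.

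First, you cite Definition 2.23 as the source of the definition of $\Xi_M$; in Adler's paper that definition only governs dividing patterns and the rank $D_{\phi,\psi}$ (that is, the part of the unwrapping that happens \emph{before} this lemma). The set $\Xi_M$ is introduced in Definition 2.35, and the notion of a $k$-inconsistency witness (what you call a "consistency obstruction") comes from Definition 2.9. The paper's proof walks through those two in turn.

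Second, and more substantively, you frame the specific shape of $\psi$ — all $u_i$ distinct, all $\bar v_i$ equal — as something to be derived by "normalizing $\psi$," and you then say "one must justify that Adler's definition of $\Xi_M$ does not force us to consider Boolean relations among $\bar y_i$'s richer than" the indicated form. This gets the logical direction backwards: $\Xi_M$ is \emph{defined} as the restriction of $\Xi$ to pairs where $\psi$ is (equivalent to) exactly that formula, after a prior choice of how to split $\bar y$ as $u\bar v$. There is nothing to normalize or justify; you simply read off the definition. Since your phrasing presents a definitional fact as a lemma requiring argument, a reader could be misled into thinking there is nontrivial content there. The only genuinely nonvacuous observation in this step, which you do note correctly, is that $\bigl(\bigwedge_{i<k}\phi(\bar x;u_i\bar v_i)\bigr)\wedge\psi$ is inconsistent if and only if $\bigl(\bigwedge_{i<k}\phi(\bar x;u_i\bar v)\bigr)\wedge\bigl(\bigwedge_{i\neq j<k}u_i\neq u_j\bigr)$ is, which lets one collapse the $\bar v_i$'s to a single $\bar v$. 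With the definitional attribution corrected and the "normalization" rephrased as a reading of Definition 2.35, your argument coincides with the paper's.
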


\begin{proof}

 Definition 2.35: $\Xi_M$ is the set of those pairs $( \phi(\bar{x}; u \bar{v}), \psi( (u \bar{v})_{<k}))$ in $\Xi$ where  $\psi$ is (equivalent to) ``all $u_i$ different, all $\bar{v}_i$ same''.
 With Definition 2.9, $\Xi_M$ is the set of those pairs $( \phi(\bar{x}; u \bar{v}), \psi( (u \bar{v})_{<k}))$ where  $\psi$ says ``all $u_i$ different, all $\bar{v}_i$ same'' and is a $k$-inconsistency witness for $\phi$.

 With more Definition 2.9, $\Xi_M$ is the set of those pairs $( \phi(\bar{x}; u \bar{v}), \psi( (u \bar{v})_{<k}))$ where $\psi := (\wedge_{i\neq j <k} u_i \neq u_j) \wedge (\wedge_{i,j} \bar{v}_i = \bar{v}_j)$ and
  $$ \Lambda := ( \wedge_{i < k} \phi(\bar{x}; u_i \bar{v}_i) ) \wedge  \psi$$
 is inconsistent. Note that $\Lambda$ is inconsistent if and only if $\Lambda' := ( \wedge_{i < k} \phi(\bar{x}; u_i \bar{v}) ) \wedge ( \wedge_{i\neq j <k} u_i \neq u_j )$ is inconsistent.
 Also, as noted after Definition 2.35, $\Lambda'$ is inconsistent if and only of whenever $\phi(\bar{a}, b \bar{c})$ holds, $b$ is algebraic over $\bar{a}\bar{c}$.
\end{proof}

 Now let us unwrap the definitions of dividing patterns, keeping in mind the identity of $\psi$ and the separation of the second set of variables.

\begin{lem} The empty type over $C$ has a $(\phi, \psi)$ dividing pattern of order-type $\omega$ for $\psi := (\wedge_{i\neq j <k} u_i \neq u_j) \wedge (\wedge_{i,j} \bar{v}_i = \bar{v}_j)$ if and only if there are $(b^i, \bar{c}^i, \bar{d}^i)_{i \in \omega}$ and $b^{ij}$ for $j \in \omega$ such that
\begin{itemize}
\item $\{ \phi(x, b^i, \bar{c}^i) \sthat i \in \omega \}$ is consistent;
\item $b^{ij}, \bar{d}^i \equiv_{A_i} b^{i}, \bar{c}^{i}$ for $A_i := C \cup \{ b^i, \bar{c}^i \sthat j < i \}$; and
\item $b^{ij} \neq  b^{ij'}$ for all $j \neq j'$.
\end{itemize} \end{lem}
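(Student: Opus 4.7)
My plan is to unwind Adler's definition of a $(\phi,\psi)$ dividing pattern of order-type $\omega$ over $C$ line by line, for the specific $\psi = (\wedge_{i\neq j<k} u_i\neq u_j)\wedge(\wedge_{i,j}\bar v_i=\bar v_j)$ fixed in the statement, and verify that the data called for by this definition is in bijection with the data listed in the lemma. Concretely, Adler's definition furnishes an array $(e^{ij})_{i,j\in\omega}$ of parameter tuples in the $(u\bar v)$-sort, with $e^i := e^{i0}$, such that $\{\phi(\bar x; e^i):i\in\omega\}$ is consistent, and for each $i$ the row $(e^{ij})_{j\in\omega}$ consists of $A_i$-conjugates of $e^i$ (with $A_i := C\cup\{e^j : j<i\}$) and satisfies $\psi((e^{ij})_{j<k})$ for every $k$-element subset chosen from the row.

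The next step is to split parameters compatibly with the variable partition in $\phi(\bar x;u\bar v)$: write $e^{ij} = (b^{ij},\bar d^{ij})$ and set $b^i := b^{i0}$, $\bar c^i := \bar d^{i0}$. Then substituting the particular $\psi$ into Adler's conditions yields, clause by clause, exactly the three bullets in the lemma: the consistency of $\{\phi(\bar x;b^i,\bar c^i)\}$ is the first bullet verbatim; the $A_i$-conjugacy condition on the $i$th row becomes $b^{ij},\bar d^{ij}\equiv_{A_i}b^i,\bar c^i$; and the conjunct $\bigwedge_{i'\neq i''}u_{i'}\neq u_{i''}$ of $\psi$ becomes the distinctness clause $b^{ij}\neq b^{ij'}$ for $j\neq j'$, which is the third bullet.

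The only point that requires a moment's attention, and is the main (if modest) obstacle, is to explain why the $j$-index drops off the second component, so that one may speak of a single $\bar d^i$ rather than a sequence $(\bar d^{ij})_{j\in\omega}$. This is forced by the other conjunct $\bigwedge_{i',i''}\bar v_{i'}=\bar v_{i''}$ of $\psi$: for any $k\geq 2$, applying $\psi$ to the $2$-element subset $\{e^{i0},e^{ij}\}$ of the $i$th row yields $\bar d^{ij}=\bar d^{i0}$ for every $j$. (The degenerate case $k=1$ is excluded because then $\Lambda'$ in the preceding lemma would already be $\phi(\bar x;u_0\bar v)$, inconsistent on its own, and $\phi$ would be vacuous.) We therefore set $\bar d^i := \bar d^{i0}$, and the conjugacy condition reduces to the second bullet $b^{ij},\bar d^i\equiv_{A_i}b^i,\bar c^i$ as stated.

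For the converse direction, given the data from the lemma, one reconstructs Adler's array by defining $\bar d^{ij} := \bar d^i$ for every $j$ and $e^{ij} := (b^{ij},\bar d^{ij})$; the three bullets then translate back into consistency of the column type, the $A_i$-conjugacy of each row, and the validity of $\psi$ on every $k$-tuple from each row, yielding a $(\phi,\psi)$ dividing pattern of order-type $\omega$ over $C$. This completes the equivalence with no further input beyond the substitution of $\psi$.
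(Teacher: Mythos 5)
Your proposal is correct and takes essentially the same approach as the paper: unwrap Adler's Definitions 2.20 and 2.10, substitute the particular $\psi$, and observe that the $\bar v$-equality conjunct forces the second coordinate of each row to be constant (hence a single $\bar d^i$), while the $u$-inequality conjunct forces the $b^{ij}$ to be pairwise distinct. One small wrinkle worth flagging: you read Adler's dividing witness sequence as starting with the dividing parameter itself (setting $e^i := e^{i0}$ and hence $\bar c^i = \bar d^i$), whereas the paper's unwrapping and the lemma's statement keep $(b^i,\bar c^i)$ and $(b^{ij},\bar d^i)$ as merely $A_i$-conjugate, allowing $\bar d^i \neq \bar c^i$; in the converse direction you should therefore take $e^i := (b^i,\bar c^i)$ rather than $e^{i0}$ when rebuilding the pattern (or invoke an $A_i$-automorphism to normalize), since the given consistency is of $\{\phi(x,b^i,\bar c^i)\}$, not of $\{\phi(x,b^{i0},\bar d^i)\}$. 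Also, ``applying $\psi$ to the $2$-element subset'' should read ``applying $\psi$ to any $k$-tuple from the row containing both entries,'' since $\psi$ takes $k$ blocks of arguments; the intended deduction is unchanged.
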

\begin{proof}

 Definition 2.20, special case of $\Delta = \{ (\phi(\bar{x}; u \bar{v}), \psi) \}$,
  $I = \omega$, $p = \emptyset$ is a partial type over $C$:
 ``$\emptyset$ has $(\phi, \psi)$ dividing patterns of order-type $\omega$'' means that there are $(b^i, \bar{c}^i)_{i \in \omega}$ such that $\{ \phi(x, b^i, \bar{c}^i) \sthat i \in \omega \}$ is consistent, and $\phi(x, b^i, \bar{c}^i)$ $(\phi, \psi)$-divides over $A_i := C \cup \{ b^i, \bar{c}^i \sthat j < i \}$.

 Definition 2.10: ``$\phi(x, b^i, \bar{c}^i)$ $(\phi, \psi)$-divides over $A_i$'' means that there are $b^{ij}, \bar{c}^{ij}$ for $j \in \omega$ such that $b^{ij}, \bar{c}^{ij} \equiv_{A_i} b^{i}, \bar{c}^{i}$ and $\psi$ holds on any $k$-tuple of $b^{ij}, \bar{c}^{ij}$ with increasing $j$s.

 Considering who $\psi$ is, this says that $b^{ij} \neq  b^{ij'}$ for all $j \neq j'$ and
  $\bar{c}^{ij} = \bar{c}^{ij'}$ for all $j, j'$, so we may replace $\bar{c}^{ij}$ by $\bar{d}^i$.
\end{proof}

With these two lemmas,  $T$ is not rosy if and only if there are a formula $\phi(\bar{x}; u \bar{v})$ and an integer $k$ such that
 $$( \wedge_{i < k} \phi(\bar{x}; u_i \bar{v}) ) \wedge ( \wedge_{i\neq j <k} u_i \neq u_j )$$
 is inconsistent, and there are a set $C$ and parameters $b^i, \bar{c}^i, \bar{d}^i$ and $b_{ij}$ for $i, j \in \omega$ such that:
\begin{itemize}
\item $\{ \phi(x, b^i, \bar{c}^i) \sthat i \in \omega \}$ is consistent;
\item $b^{ij}, \bar{d}^i \equiv_{A_i} b^{i}, \bar{c}^{i}$ for $A_i := C \cup \{ b^i, \bar{c}^i \sthat j < i \}$; and
\item $b^{ij} \neq  b^{ij'}$ for all $j \neq j'$.
\end{itemize}

 The characterization in Fact \ref{adler-rosy-fact} follows from noting that if any set $C$ witnesses this, then so does $C = \emptyset$, as promised at the beginning of this section.

\begin{prop}
A theory $T$ is not rosy if and only if there are:
 a formula $\phi(\bar{x}; u \bar{v})$, an integer $k$, a model $M \models T$, and parameters $b^i, \bar{c}^i, \bar{d}^i$ and $b_{ij}$ for $i, j \in \omega$ in $M^{eq}$ such that \begin{itemize}
 \item $( \wedge_{i < k} \phi(\bar{x}; u_i \bar{v}) ) \wedge ( \wedge_{i\neq j <k} u_i \neq u_j )$
 is inconsistent;
 \item $\{ \phi(x, b^i, \bar{c}^i) \sthat i \in \omega \}$ is consistent;
\item $b^{ij}, \bar{d}^i \equiv_{A_i} b^{i}, \bar{c}^{i}$ for $A_i := \{ b^i, \bar{c}^i \sthat j < i \}$; and
\item $b^{ij} \neq  b^{ij'}$ for all $j \neq j'$.
\end{itemize}
\end{prop}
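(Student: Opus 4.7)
My plan is to derive the proposition directly by stringing together the two unwrapping lemmas already given in the Appendix and then verifying the final remark that the parameter set $C$ may be absorbed. The starting point is Theorem 2.37(3) of \cite{adlerth}, which characterizes rosiness by the finiteness of the local dividing ranks $D_{\phi,\psi}(\emptyset)$ for all $(\phi,\psi) \in \Xi_M$. By Definition 2.23, the negation of this condition means precisely that some $(\phi,\psi) \in \Xi_M$ admits a dividing pattern of order type $\omega$ over the empty set.

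Next I would invoke the first lemma of the Appendix, which unpacks the definition of $\Xi_M$ (Definition 2.35 combined with Definition 2.9) and pins down the shape of $\psi$. The upshot is that up to equivalence one may take $\psi$ to be $(\bigwedge_{i \neq j < k} u_i \neq u_j) \wedge (\bigwedge_{i,j} \bar{v}_i = \bar{v}_j)$, and the inconsistency of $( \bigwedge_{i<k} \phi(\bar{x}; u_i \bar{v}_i)) \wedge \psi$ (equivalent to inconsistency of $( \bigwedge_{i<k} \phi(\bar{x}; u_i \bar{v})) \wedge (\bigwedge_{i \neq j} u_i \neq u_j)$) is precisely the first bullet of the target statement.

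Then I would apply the second lemma of the Appendix, which instantiates Definitions 2.10 and 2.20 for this specific $\psi$. Here the fact that $\psi$ forces the $\bar{v}$-components to coincide is what collapses the would-be tuples $\bar{c}^{ij}$ to a single $\bar{d}^i$ depending only on $i$. This produces the remaining three bullets of the proposition: the consistency of $\{\phi(x, b^i, \bar{c}^i)\}$, the equality of types $\operatorname{tp}_L(b^{ij},\bar{d}^i/A_i) = \operatorname{tp}_L(b^i,\bar{c}^i/A_i)$, and the injectivity condition $b^{ij} \neq b^{ij'}$ for $j \neq j'$.

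The last step, and the only one that requires an actual observation rather than bookkeeping, is the reduction from an arbitrary base set $C$ to $C = \emptyset$. The hard part (really the only nontrivial part) is justifying this reduction: one uses homogeneity of the monster model to move any finite tuple of parameters into a desired position, so that if a witnessing pattern exists over some $C$, a translated pattern exists over $\emptyset$, with all four bullet conditions preserved. Since the inconsistency in the first bullet is a property of $\phi$ alone and the remaining conditions are about types and distinctness, this translation step goes through cleanly. Combining everything yields both directions of the proposition.
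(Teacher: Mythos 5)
Your proof follows the same route as the paper's: invoke Theorem 2.37(3) of Adler, apply the two unwrapping lemmas already recorded in the Appendix, and then reduce the base set $C$ to $\emptyset$. The first two lemma applications are handled correctly.

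The one place where your argument goes astray is the final reduction to $C = \emptyset$. You propose to use homogeneity of the monster model to ``translate'' a witnessing pattern over $C$ into one over $\emptyset$, but an automorphism merely carries $C$ to some image $C'$; it does not make the base empty (unless $C$ was already empty). The correct and much simpler observation is monotonicity: if $b^{ij},\bar d^i \equiv_{A_i} b^i,\bar c^i$ for $A_i := C \cup \{b^j,\bar c^j : j < i\}$, then a fortiori the same type equality holds over the smaller set $\{b^j,\bar c^j : j < i\}$, since restricting to a subset of parameters only weakens the condition. The other three bullets do not involve $C$ at all. So the very same tuple of parameters witnesses the pattern with $C = \emptyset$; no automorphism is needed, and indeed homogeneity would not deliver the conclusion you want. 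Replace that step with the monotonicity observation and the proof is complete and matches the paper.
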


\bibliographystyle{plain}
\bibliography{qacfa}

\end{document}